\title{Geometry and convergence of natural\\ policy gradient methods} 
\author{
\bfseries{Johannes M\"uller} \\
Max Planck Institute for Mathematics in the Sciences, Leipzig, Germany \\
\texttt{jmueller@mis.mpg.de} \\[.1cm]
\bfseries{Guido Mont\'ufar}\\
Departments of Mathematics and Statistics, UCLA, CA, USA\\
Max Planck Institute for Mathematics in the Sciences, Leipzig, Germany\\
\texttt{montufar@math.ucla.edu}\\
}
\date{\large\rmfamily\today}
\theoremstyle{definition}
\newtheorem{definition}{Definition}[]
\newtheorem{example}[definition]{Example}
\newtheorem{remark}[definition]{Remark}
\newtheorem{setting}[definition]{Setting}
\newtheorem{assumption}[definition]{Assumption}
\newtheorem*{setting*}{Setting}
\newtheorem*{assumption*}{Assumption}
\theoremstyle{plain}
\newtheorem{theorem}[definition]{Theorem}
\newtheorem{corollary}[definition]{Corollary}
\newtheorem{proposition}[definition]{Proposition}
\newtheorem{lemma}[definition]{Lemma}
\definecolor{ao}{rgb}{0.3, .7, 0.0}
\renewcommand{\AA}{\mathcal{A}}
\renewcommand{\SS}{\mathcal{S}}
\newcommand{\OO}{\mathcal{O}}
\tikzset{
>=stealth',
punkt/.style={
           rectangle,
           rounded corners,
           draw=black, very thick,
           text width=6.5em,
           minimum height=2em,
           text centered},
pil/.style={
           ->,
           thick,
           shorten <=2pt,
           shorten >=2pt,}
}
\begin{document}

\maketitle

\begin{abstract}
We study the convergence of several natural policy gradient (NPG) methods in infinite-horizon discounted Markov decision processes with regular policy parametrizations. For a variety of NPGs and reward functions we show that the trajectories in state-action space are solutions of gradient flows with respect to Hessian geometries, based on which we obtain global convergence guarantees and convergence rates. In particular, we show linear convergence for unregularized and regularized NPG flows with the metrics proposed by Kakade and Morimura and co-authors by observing that these arise from the Hessian geometries of conditional entropy and entropy respectively. Further, we obtain sublinear convergence rates for Hessian geometries arising from other convex functions like log-barriers. Finally, we interpret the discrete-time NPG methods with regularized rewards as inexact Newton methods if the NPG is defined with respect to the Hessian geometry of the regularizer. This yields local quadratic convergence rates of these methods for step size equal to the penalization strength. 

\noindent 
\textbf{Keywords} 
Markov decision process, Natural policy gradient, State-action frequency, Hessian geometry, stochastic policy
\end{abstract}

\section{Introduction}

Markov decision processes (MDPs) are an important model for sequential decision making in interaction with an environment and constitute a theoretical framework for modern reinforcement learning (RL). This framework has been successfully applied in recent years to solve increasingly complex tasks from robotics to board and video games~\cite{silver2016mastering, silver2017mastering, peters2003reinforcement, mnih2013playing, shao2019survey}. 
In MDPs the goal is to identify a \emph{policy} $\pi$, i.e., a procedure to select actions at every time step, which maximizes an expected time-aggregated reward $R(\pi)$. 
We will assume that the set of possible states $\SS$ and the set of possible actions $\AA$ are finite, 
and model the policy $\pi_\theta$ as a differentiably parametrized element in the polytope $\Delta_\AA^\SS$ of conditional probability distributions of actions given states, 
with $\pi_\theta(a|s)$ specifying the probability of selecting action $a\in\AA$ when currently in state $s\in\SS$, for the parameter value $\theta$. 
We will study gradient-based policy optimization methods and more specifically \emph{natural policy gradient (NPG)} methods. 
Inspired by the seminal works of Amari~\cite{amari1998natural, amari1998why}, various NPG methods have been proposed~\cite{kakade2001natural, morimura2008new, moskovitz2021efficient}. 
In general, they take the form 
    \[ 
    \theta_{k+1} = \theta_k + \Delta t G(\theta_k)^+\nabla R(\theta_k), 
    \]
where $G(\theta)^+$ denotes the Moore-Penrose pseudo inverse and $G(\theta)_{ij} = g(dP_\theta e_i, dP_\theta e_j)$ is a Gram matrix defined with respect to some Riemannian metric $g$ and some representation $P(\theta)$ of the parameter. 
Most of our analysis does not actually depend on the specific choice of the pseudo inverse, but in Section~\ref{sec:convergenceDiscrete} we will use the Moore-Penrose pseudo inverse. 
The most traditional natural gradient method is the special case where $P(\theta)$ is a probability distribution and $g$ is the Fisher information in the corresponding space of probability distributions. However, the terminology may be used more generally to refer to a Riemannian gradient method where the metric is in some sense natural. 
Kakade~\cite{kakade2001natural} proposed using $P(\theta) = \pi_\theta$ and taking for $g$ a product of Fisher metrics weighted by the state probabilities resulting from running the Markov process with policy $\pi_\theta$. 
Although this is a natural choice for $P$, the choice of a Riemannian metric on $\Delta^\SS_\AA$ is a non trivial problem. Peters et al.~\cite{peters2003reinforcement} offered reasons to regard Kakade's metric as the true Fisher metric in this case, yet other choices of the weights can be motivated by axiomatic approaches to define a Fisher metric of conditional probabilities \cite{Lebanon2005AxiomaticGO, montufar2014fisher}. 
From our perspective, a main difficulty is that it is not clear how to choose a Riemannian metric on $\Delta^\SS_\AA$ that interacts nicely with the objective function $R(\pi)$, which is a non-convex rational function of $\pi\in\Delta^\SS_\AA$. 
An alternative choice for $P(\theta)$ is the vector of \emph{state-action frequencies} $\eta_\theta$, whose components $\eta_\theta(s,a)$ are the
probabilities of state-action pairs $(s,a)\in\SS\times\AA$ resulting from running the Markov process with policy $\pi_\theta$. 
Morimura et al.~\cite{morimura2008new} proposed using $P(\theta)=\eta_\theta$ and the Fisher information on the state-action probability simplex $\Delta_{\SS\times\AA}$ as a Riemannian metric. 
We will study both approaches and variants from the perspective of Hessian geometry.

\paragraph{Contributions}
We study the natural policy gradient dynamics inside the polytope $\mathcal{N}$ of state-action frequencies, which provides a unified treatment of several existing NPG methods. 
We focus on finite state and action spaces and the expected infinite-horizon discounted reward optimized over the set of memoryless stochastic policies. 
\begin{itemize}
\item 
We show that the dynamics of Kakade's NPG and Morimura's NPG solve a gradient flow in $\mathcal N$ with respect to the Hessian geometries of conditional entropic and entropic regularization of the reward (Sections~\ref{subsubsec:KakadeGeomtry} and~\ref{subsec:morimura} and Proposition~\ref{prop:evolutionStateAction}). 

\item 
Leveraging results on gradient flows in Hessian geometries, we derive linear convergence rates for Kakade's and Morimura's NPG flow for the unregularized reward, which is a linear and hence not strictly concave function in state-action space, and also for regularized reward (Theorems~\ref{thm:linearconvergenceKakade} and \ref{thm:sigmaNPGflowfastrates} and Corollaries \ref{cor:kakadeLinearConvReg} and \ref{cor:sigmaLinearConvReg}). 

\item 
Further, for a class of NPG methods which correspond to $\beta$-divergences and which generalize Morimura's NPG, we show sub-linear convergence in the unregularized case and linear convergence in the regularized case (Theorem~\ref{thm:sigmaNPGflowfastrates} and Corollary~\ref{cor:sigmaLinearConvReg}, respectively). 

\item 
We complement our theoretical analysis with experimental evaluation, which indicates that the established linear and sub-linear rates for unregularized problems are essentially tight. 

\item 
For discrete-time gradient optimization, our ansatz in state-action space yields an interpretation of the regularized NPG method as an inexact Newton iteration if the step size is equal to the regularization strength. 
This yields a relatively short proof for the local quadratic convergence of regularized NPG methods with Newton step sizes (Theorem~\ref{thm:localQuadraticConvergence}). 
This recovers as a special case the local quadratic convergence of Kakade's NPG under state-wise entropy regularization previously shown in~\cite{cen2021fast}. 
\end{itemize} 

\paragraph{Related work}
The application of natural gradients to optimization in MDPs was first proposed by Kakade~\cite{kakade2001natural}, taking as a metric on $\Delta^\SS_\AA$ the product of Fisher metrics on $\Delta^s_\AA$, $s\in\SS$, weighted by the stationary state distribution. 
The relation of this metric to finite-horizon Fisher information matrices was studied by Bagnell and Schneider~\cite{bagnell2003covariant} as well as by Peters et al.~\cite{peters2003reinforcement}. 
Later, Morimura et al.~\cite{morimura2008new} proposed a natural gradient using the Fisher metric on the state-action frequencies, which are probability distributions over states and actions. 

There has been a growing number of works studying the convergence properties of policy gradient methods. 
It is well known that reward optimization in MDPs is a challenging problem, where both the non-convexity of the objective function with respect to the policy and the particular parametrization of the policies can lead to the existence of suboptimal critical points~\cite{bhandari2019global}. Global convergence guarantees of gradient methods require assumptions on the parametrization. Most of the existing results are formulated for tabular softmax policies, but more general sufficient criteria have been given in~\cite{bhandari2019global, zhang2020global, zhang2022convergence}. 

Vanilla PGs have been shown to converge sublinearly at rate $O(t^{-1})$ for the unregularized reward and linearly for entropically regularized reward. 
For unregularized problems, the convergence rate can be improved to a linear rate by normalization~\cite{mei2020global, mei2021leveraging}. 
For continuous state and action spaces, vanilla PG converges linearly for entropic regularization and shallow policy networks in the mean-field regime~\cite{leahy2022convergence}. 

For Kakade's NPG, \cite{agarwal2021theory} established sublinear convergence rate $O(t^{-1})$ for unregularized problems, and the result has been improved to a linear rate of convergence for step sizes found by exact line search~\cite{bhandari2021linear}, constant step sizes~\cite{khodadadian2021linear, alfano2022linear, yuan2022linear}, and for geometrically increasing step sizes~\cite{xiao2022convergence}. 
For regularized problems, the method converges linearly for small step sizes and locally quadratically for Newton-like step size~\cite{cen2021fast}. 
These results have been extended to more general frameworks using state-mixtures of Bregman divergences on the policy polytope~\cite{lan2022policy, zhan2021policy, li2021quasi}, which however do not include NPG methods defined in state-action space such as Morimura’s NPG. 
For projected PGs, \cite{agarwal2021theory} shows sublinear convergence at a rate $O(t^{-1/2})$, and the result has been improved to a sublinear rate $O(t^{-1})$ \cite{xiao2022convergence}, and to a linear rate for step sizes chosen by exact line search~\cite{bhandari2021linear}. 

Apart from the works on convergence rates for policy gradient methods for standard MDPs, a primal-dual NPG method with sublinear global convergence guarantees has been proposed for constrained MDPs~\cite{ding2020natural, ding2022convergence}. 
For partially observable systems, a gradient domination property has been established  in~\cite{azizzadenesheli2018policy}. 
NPG methods with dimension-free global convergence guarantees have been studied for multi-agent MDPs and potential games~\cite{alfano2021dimension}. 
The sample complexity of a Bregman policy gradient arising from a strongly convex function in parameter space has been studied in~\cite{huang2021bregman}. 
For the linear quadratic regulator, global linear convergence guarantees for vanilla, Gauss-Newton and Kakade’s natural policy gradient methods are provided in~\cite{fazel2018global}; 
note that this setting is different to reward optimization in MDPs, where the objective at a fixed time is linear and not quadratic. 
A lower bound of $O(\eta^{-1}\lvert \mathcal S\rvert^{2^{\Omega((1-\gamma)^{-1})}})$ on the iteration complexity for softmax PG method with step size $\eta$ has been established in~\cite{li2021softmax}. 


\paragraph{Notation}
We denote the simplex of probability distributions over a finite set \(\mathcal X\) by \(\Delta_{\mathcal X}\). An element \(\mu\in\Delta_{\mathcal X}\) is a vector with non-negative entries \(\mu_x = \mu(x)\), $x\in\mathcal{X}$ adding to one, $\sum_x \mu_x=1$. We denote the set of Markov kernels from a finite set $\mathcal X$ to another finite set $\mathcal{Y}$ by \(\Delta_{\mathcal Y}^{\mathcal X}\). An element \(Q\in \Delta_{\mathcal Y}^{\mathcal X}\) is a $|\mathcal{X}|\times |\mathcal{Y}|$ row stochastic matrix with entries \(Q_{xy} = Q(y|x)\), $x\in\mathcal{X}$, $y\in\mathcal{Y}$. Given \(Q^{(1)}\in\Delta_{\mathcal Y}^{\mathcal X}\) and \(Q^{(2)}\in\Delta_{\mathcal Z}^{\mathcal Y}\) we denote their composition into a kernel from $\mathcal{X}$ to $\mathcal{Z}$ by \(Q^{(2)}\circ Q^{(1)}\in\Delta_{\mathcal Z}^{\mathcal X}\). Given $p\in \Delta_{\mathcal{X}}$ and $Q\in\Delta^{\mathcal{X}}_{\mathcal{Y}}$ we denote their composition into a joint probability distribution by $p\ast Q \in \Delta_{\mathcal{X}\times\mathcal{Y}}$, $(p\ast Q)(x, y)\coloneqq p(x)Q(y|x)$. The support of a vector $v\in \mathbb{R}^\mathcal{X}$ is the set $\operatorname{supp}(v) = \{x\in\mathcal{X}\colon v_x\neq0\}$. 

For a vector $\mu\in\mathbb R_{\ge0}^\mathcal X$ we denote its \emph{Shannon entropy} by
    \[ H(\mu)\coloneqq -\sum_x \mu(x)\log(\mu(x)), \]
with the usual convention that $0\log(0)\coloneqq 0$. For $\mu\in\mathbb R_{\ge0}^{\mathcal X\times\mathcal Y}$ we denote the $X$-marginal by $\mu_X\in 
\mathbb{R}^\mathcal{X}_{\geq0}$, where $\mu_X(x)\coloneqq \sum_y \mu(x, y)$. Further, we denote the
\emph{conditional entropy} of $\mu$ conditioned on $X$ by
\begin{equation}\label{eq:conditionalEntropy}
    H(\mu|\mu_X) \coloneqq -\sum_{x, y} \mu(x, y) \log\frac{\mu(x,y)}{\mu_X(x)} = H(\mu) - H(\mu_X).
\end{equation}
For any strictly convex function $\phi\colon\Omega\to\mathbb R$ defined on a convex subset $\Omega\subseteq \mathbb R^d$, the associated \emph{Bregman divergence $D_\phi\colon\Omega\times\Omega\to\mathbb R$} is given by $D_\phi(x, y) \coloneqq \phi(x) - \phi(y) - \langle \nabla \phi(y), x-y\rangle$.

Given two smooth manifolds $\mathcal M$ and $\mathcal N$ and a smooth function $f\colon\mathcal M\to\mathcal N$, we denote the differential of $f$ at $p\in\mathcal M$ by $df_p\colon T_p\mathcal M\to T_{f(p)}\mathcal N$. 
In the Euclidean case, we also write $Df(p)$ for the Jacobian matrix with entries $Df(p)_{ij} = \partial_j f_i(p)$.
We denote the gradient of a smooth function $f\colon\mathcal M\to\mathbb R$ defined on a Riemannian manifold $(\mathcal M, g)$ by $\nabla^gf\colon\mathcal M\to T\mathcal M$ and denote the values of the vector field by $\nabla^g f(p)\in T_p\mathcal M$ for $p\in\mathcal M$. When the Riemannian metric is unambiguous we drop the superscript. 

For $A\in\mathbb R^{n\times m}$, we denote its Moore-Penrose inverse by $A^+\in\mathbb R^{m\times n}$. Note that $AA^+$ is the orthogonal (Euclidean) projection onto $\operatorname{range}(A)$ and $A^+A$ is the orthogonal (Euclidean) projection onto $\operatorname{ker}(A)$. 
Finally, for functions $f,g$ we write $f(t) = O(g(t))$ for $t\to t_0$ if there is a constant $c>0$ such that $f(t)\le cg(t)$ for $t\to t_0$, where we allow $t_0=+\infty$.

\section{Markov decision processes}

A \emph{Markov decision process} or shortly \emph{MDP} is a tuple \((\mathcal S, \mathcal A, \alpha, r)\). We assume that \(\mathcal S\) and \(\mathcal A\) are finite sets which we call the \emph{state} and \emph{action space} respectively. 
We fix a Markov kernel \(\alpha\in\Delta_{\mathcal S}^{\mathcal S\times\mathcal A}\) which we call the \emph{transition mechanism}. Further, we consider an \emph{instantaneous reward vector} \(r\in\mathbb R^{\mathcal S\times\mathcal A}\). 
In the case of partially observable MDPs (POMDPs) one also has a fixed kernel \(\beta\in\Delta_{\mathcal O}^{\mathcal S}\) called the \emph{observation mechanism}. The system is \emph{fully observable} if \(\beta=\operatorname{id}\),\footnote{More generally, the system is fully observable if the supports of \(\{\beta(\cdot|s)\}_{s\in\mathcal S}\) are disjoint subsets of $\mathcal{O}$.} in which case the POMDP simplifies to an MDP. 

As \emph{policies} we consider elements \(\pi\in\Delta_{\mathcal A}^{\mathcal S}\). 
More generally, in POMDPs we would consider \emph{effective policies} $\pi = \pi'\circ\beta\in\Delta_{\mathcal A}^{\mathcal S}$ with $\pi'\in\Delta^\OO_\AA$. We will focus on the MDP case, however. 
A policy induces transition kernels \(P_\pi\in\Delta_{\mathcal S\times\mathcal A}^{\mathcal S\times\mathcal A}\) and \(p_\pi\in\Delta_{\mathcal S}^{\mathcal S}\) by
    \begin{equation} 
    P_\pi(s^\prime, a^\prime|s, a) \coloneqq \alpha(s^\prime|s, a) (\pi\circ\beta)(a^\prime|s^\prime) \quad \text{and} \quad p_\pi(s^\prime|s) \coloneqq \sum_{a\in\mathcal A} (\pi\circ \beta)(a|s) \alpha(s^\prime|s, a) . \label{eq:transition-kernel}
    \end{equation}
For any initial state distribution \(\mu\in\Delta_{\mathcal S}\), a policy \(\pi\in\Delta_{\mathcal A}^{\mathcal S}\) defines a Markov process on \(\mathcal S\times \mathcal A\) with transition kernel \(P_\pi\) which we denote by \(\mathbb P^{\pi, \mu}\). 
For a \emph{discount rate} \(\gamma\in(0, 1)\) we define
\begin{align*}
    R(\pi) = R^\mu_\gamma(\pi) & \coloneqq \mathbb E_{\mathbb P^{\pi, \mu}}\bigg[ (1-\gamma) \sum_{t=0}^\infty\gamma^t r(s_t, a_t)\bigg],  
\end{align*}
called the \emph{expected discounted reward}. 
The \emph{expected mean reward} is obtained as the limit with $\gamma\to 1$ when this exists. We will focus on the discounted case, however. 
{The goal is to maximize $R$ over the policy polytope $\Delta_\mathcal A^\mathcal O$.} For a policy \(\pi\) we define the \emph{value function} \(V^\pi =V^\pi_\gamma \in\mathbb R^\mathcal S\) via \(V^\pi(s)\coloneqq R_{\gamma}^{\delta_s}(\pi)\), $s\in\mathcal{S}$, where \(\delta_s\) is the Dirac distribution concentrated at \(s\). 

A short calculation  shows that $R(\pi) = \sum_{s, a} r(s, a) \eta^{\pi}(s, a) = \langle r, \eta^{\pi}\rangle_{\mathcal S\times\mathcal A}$ \citep{zahavy2021reward}, where
\begin{equation}
\eta^{\pi}(s, a) \coloneqq (1-\gamma)\sum_{t=0}^\infty \gamma^t \mathbb P^{\pi, \mu}(s_t = s, a_t = a). 
\end{equation}
The vector $\eta^{\pi}$ is an element of $\Delta_{\mathcal{S}\times \mathcal{A}}$ called the \emph{expected (discounted) state-action frequency} \citep{derman1970finite}, or {(discounted) visitation/occupancy measure}, or on-policy distribution \citep{sutton2018reinforcement}. Denoting the state marginal of \(\eta^{\pi}\) by \(\rho^{\pi}\in\Delta_\mathcal S\) we have $\eta^{\pi}(s, a) = \rho^{\pi}(s) (\pi\circ\beta)(a|s)$. 
We denote the set of all state-action frequencies in the fully and in the partially observable cases by 
    \[ 
    \mathcal N \coloneqq \left\{ \eta^{\pi} : \pi\in\Delta_{\mathcal A}^{\mathcal S} \right\} \subseteq \Delta_{\mathcal S\times\mathcal A}  \quad \text{and}\quad \mathcal N^{\beta} \coloneqq \left\{ \eta^\pi : \pi\in\Delta_{\mathcal A}^{\mathcal O} \right\} \subseteq \Delta_{\mathcal S\times\mathcal A} . 
    \]
Note that the expected cumulative reward function \(R\colon\Delta_\mathcal A^\mathcal O\to\mathbb R\) factorizes according to 
    \[ 
    \Delta_\mathcal A^\mathcal O\to\Delta_\mathcal A^\mathcal S\to\mathcal N^{\mu}\to\mathbb R, \quad \pi\mapsto \pi\circ\beta\mapsto\eta^{\pi}\mapsto \langle r, \eta^{\pi}\rangle_{\mathcal S\times\mathcal A}.
    \]
We recall the following well-known facts. 

\begin{proposition}[State-action polytope of MDPs, \cite{derman1970finite}]
\label{prop: state_action_polytope_fully_observable}
The set $\mathcal N$ of state-action frequencies is a polytope given by $\mathcal N = \Delta_{\SS\times\AA}\cap\mathcal L = \mathbb R_{\ge0}^{\mathcal S\times\mathcal A}\cap\mathcal L$, where
\begin{equation}\label{eq:linearSpace}
     \mathcal L = \left\{ \eta\in\mathbb R^{\SS\times\AA} : \ell_s(\eta) = 0 \text{ for all } s\in\SS, \eta\ge0 \right\},
\end{equation}
and $\ell_s(\eta) \coloneqq \sum_{a} \eta_{sa} - \gamma\sum_{s',a'} \eta_{s'a'}\alpha(s|s', a') - (1-\gamma) \mu_s$. 
\end{proposition}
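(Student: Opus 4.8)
The plan is to establish the two claimed set equalities by a direct analysis of the defining flow functionals $\ell_s$, which encode a Bellman-type balance for the discounted visitation measure. I would organize the argument into the forward inclusion $\mathcal N\subseteq\mathbb R_{\ge0}^{\SS\times\AA}\cap\mathcal L$, the identification $\Delta_{\SS\times\AA}\cap\mathcal L=\mathbb R_{\ge0}^{\SS\times\AA}\cap\mathcal L$, and the reverse inclusion, and then conclude polytopality from these descriptions.

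First I would show that every $\eta^\pi$ satisfies the flow constraints. Writing $\mu_t(s)=\mathbb P^{\pi,\mu}(s_t=s)$, the Markov property gives the recursion $\mu_{t+1}(s)=\sum_{s',a'}\mu_t(s')\pi(a'|s')\alpha(s|s',a')$. Summing the geometric series defining the state marginal $\rho^\pi(s)=\sum_a\eta^\pi(s,a)=(1-\gamma)\sum_t\gamma^t\mu_t(s)$, isolating the $t=0$ term, and reindexing $t\mapsto t-1$ in the recursion yields
\[
\rho^\pi(s)=(1-\gamma)\mu(s)+\gamma\sum_{s',a'}\eta^\pi(s',a')\alpha(s|s',a'),
\]
which is precisely $\ell_s(\eta^\pi)=0$; since $\eta^\pi\ge0$ is immediate, this gives the forward inclusion. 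Next, summing $\ell_s(\eta)=0$ over $s$ and using $\sum_s\alpha(s|s',a')=1$ and $\sum_s\mu_s=1$ collapses to $(1-\gamma)\sum_{s,a}\eta_{sa}=(1-\gamma)$, so any nonnegative $\eta$ obeying the flow constraints is automatically normalized. This proves $\mathbb R_{\ge0}^{\SS\times\AA}\cap\mathcal L=\Delta_{\SS\times\AA}\cap\mathcal L$.

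The substantive step is the reverse inclusion. Given $\eta\ge0$ with $\ell_s(\eta)=0$, I would set $\rho(s)=\sum_a\eta_{sa}$ and define a policy by $\pi(a|s)=\eta_{sa}/\rho(s)$ whenever $\rho(s)>0$ and arbitrarily (say uniformly) otherwise; because $\eta\ge0$, any state with $\rho(s)=0$ has $\eta_{sa}=0$ for all $a$, so the factorization $\eta_{sa}=\rho(s)\pi(a|s)$ holds in every case. Substituting this into the constraint rewrites it in matrix form as $\rho=(1-\gamma)\mu+\gamma\,\rho P_\pi$, where $P_\pi$ is the state-transition matrix $p_\pi$ and $\rho$ is viewed as a row vector. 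I would then use that $I-\gamma P_\pi$ is invertible for $\gamma\in(0,1)$, since $P_\pi$ is row-stochastic with $\|P_\pi\|_\infty=1$ and hence $\gamma P_\pi$ has spectral radius $\gamma<1$ (Neumann series). Thus the system has the unique solution $\rho=(1-\gamma)\mu(I-\gamma P_\pi)^{-1}$. As the state marginal $\rho^\pi$ of the constructed policy solves the very same equation by the forward computation, uniqueness forces $\rho^\pi=\rho$, whence $\eta^\pi(s,a)=\rho^\pi(s)\pi(a|s)=\rho(s)\pi(a|s)=\eta_{sa}$, so $\eta\in\mathcal N$.

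The main obstacle I anticipate is bookkeeping rather than conceptual: the reindexing of the discounted series in the forward direction must be carried out carefully to produce exactly the coefficient $\gamma$ and the inhomogeneity $(1-\gamma)\mu$, and in the reverse direction one must handle states of zero visitation frequency cleanly so that $\eta_{sa}=\rho(s)\pi(a|s)$ is valid everywhere (including where the policy is defined arbitrarily). The one genuinely analytic ingredient is the invertibility of $I-\gamma P_\pi$, but this is immediate from $\gamma<1$. Finally, that $\mathcal N$ is a \emph{polytope} follows because it is the intersection of the bounded simplex $\Delta_{\SS\times\AA}$ with the affine space cut out by the finitely many equations $\ell_s(\eta)=0$, hence a bounded intersection of finitely many halfspaces and hyperplanes.
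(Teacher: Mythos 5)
Your proof is correct, and it is the standard argument: the paper itself states this proposition without proof, citing Derman (1970), and your reasoning (forward inclusion via the discounted-series recursion, automatic normalization by summing the constraints, and the reverse inclusion via conditioning plus invertibility of $I-\gamma p_\pi$) is precisely the classical route taken in that reference and its successors. All the delicate points — the zero-visitation states, the uniqueness of the solution to $\rho=(1-\gamma)\mu+\gamma\rho p_\pi$, and the boundedness needed for polytopality — are handled correctly.
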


The state-action polytope for a two-state MDP is shown in Figure~\ref{fig:illustrations}. 
We note that in in the case of partially observable MDPs, the set of state-action frequencies $\mathcal{N}^\beta$ does not form a polytope, but rather a polynomially constrained set involving polynomials of higher degree depending on the properties of the observation kernel~\cite{mueller2022geometry}. 

The result above shows that a (fully observable) Markov decision process can be solved by means of linear programming. 
Indeed, if $\eta^\ast$ is a solution of the linear program $\langle r,\eta\rangle_{\SS\times\AA}$ over $\mathcal{N}$, one can compute the maximizing policy over $\Delta^\SS_\AA$ by conditioning, $\pi^\ast(a|s) = \eta^\ast(s,a)/\sum_{a'}\eta^\ast(s,a)$. 
We propose to study the evolution of natural policy gradient methods in state-action space $\mathcal{N}\subseteq\Delta_{\mathcal{S}\times\mathcal{A}}$. Indeed, we show that the evolution of diverse natural policy gradient algorithms in the state-action polytope solves the gradient flow of a (regularized) linear objective with respect to a Hessian geometry in state-action space. This perspective facilitates relatively short proofs for the global convergence of natural policy gradient methods and can also provide rates. In order to relate Riemannian geometries in the policy space $\Delta_\AA^\SS$ to Riemannian geometries in the state-action polytope $\mathcal N$ we need the following assumption. 

\begin{assumption}[Positivity]\label{ass:positivity}
For every $s\in\mathcal S$ and $\pi\in\Delta_\AA^\OO$, we assume that $\sum_{a} \eta_{sa}^\pi>0$. 
\end{assumption}

Assumption~\ref{ass:positivity} holds in particular if either \(\alpha>0\) and $\gamma>0$ or $\gamma<1$ and \(\mu>0\) entrywise~\cite{mueller2022geometry}. This assumption is standard in linear programming approaches and necessary for the convergence of policy gradient methods in MDPs~\citep{kallenberg1994survey,mei2020global}. 
With this assumption in place we have the following. 

\begin{proposition}[Inverse of state-action map,
\cite{mueller2022geometry}]\label{prop:birational}
Under Assumption~\ref{ass:positivity}, the mapping $\Delta_\AA^\SS\to\mathcal N, \omega\mapsto \eta$ is rational and bijective with rational inverse given by conditioning
$\mathcal N \to \Delta_\AA^\SS, \eta \mapsto \omega$, 
where $\omega_{as} = \frac{\eta_{sa}}{\sum_{a'}\eta_{sa'}}$.
\end{proposition}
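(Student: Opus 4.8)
The plan is to exploit the factorization $\eta^\omega(s,a) = \rho^\omega(s)\,\omega(a|s)$ recorded above, treating the state marginal $\rho^\omega$ as the only nontrivial ingredient. First I would establish rationality by deriving the Bellman flow (balance) equation for $\rho^\omega$: splitting the geometric sum defining $\rho^\omega$ into its $t=0$ term and a shifted copy of itself yields $\rho^\omega(s') = (1-\gamma)\mu(s') + \gamma\sum_s \rho^\omega(s)\,p_\omega(s'|s)$. Writing this as $\rho^\omega(I-\gamma P_\omega) = (1-\gamma)\mu$ with $P_\omega$ the state transition matrix with entries $p_\omega(s'|s) = \sum_a \omega(a|s)\alpha(s'|s,a)$, and noting that $I-\gamma P_\omega$ is invertible because $P_\omega$ is row-stochastic and $\gamma\in(0,1)$ (so its spectral radius is at most $\gamma<1$), gives $\rho^\omega = (1-\gamma)\mu(I-\gamma P_\omega)^{-1}$. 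Since the entries $p_\omega(s'|s)$ are linear in $\omega$, Cramer's rule writes each entry of $(I-\gamma P_\omega)^{-1}$ as a ratio of polynomials in $\omega$ with common denominator $\det(I-\gamma P_\omega)$; hence $\rho^\omega$, and with it $\eta^\omega(s,a) = \rho^\omega(s)\,\omega(a|s)$, is rational in $\omega$.

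For bijectivity I would show that conditioning is a two-sided inverse. The map $F\colon\omega\mapsto\eta^\omega$ is surjective onto $\mathcal N$ by the very definition of $\mathcal N$, so it remains to analyze the conditioning map $G\colon\eta\mapsto\omega$, $\omega(a|s) = \eta_{sa}/\sum_{a'}\eta_{sa'}$. Assumption~\ref{ass:positivity} guarantees $\sum_{a'}\eta_{sa'} = \rho^\omega(s)>0$ for every $s$, so $G$ is well defined on $\mathcal N$ and visibly rational. Substituting $\eta^\omega(s,a) = \rho^\omega(s)\,\omega(a|s)$ into $G$ and using $\sum_{a'}\omega(a'|s)=1$ gives $G(F(\omega)) = \omega$, which proves injectivity of $F$; and since every $\eta\in\mathcal N$ equals $\eta^\pi$ for some $\pi$, the same identity applied to $\pi$ yields $G(\eta)=\pi$ and therefore $F(G(\eta)) = \eta^\pi = \eta$. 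Together with surjectivity this establishes that $F$ is a rational bijection whose inverse is the rational conditioning map $G$.

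The one genuinely substantive step is the rationality of $\rho^\omega$; the rest is bookkeeping. The point to handle carefully is the invertibility of $I-\gamma P_\omega$ \emph{uniformly} over the whole polytope $\Delta_\AA^\SS$: this is what lets Cramer's rule produce a single rational expression valid everywhere on $\Delta_\AA^\SS$ rather than merely locally, and it is exactly where $\gamma<1$ enters. The role of Assumption~\ref{ass:positivity} is confined to making the conditioning map well defined, since without positivity some state marginals $\rho^\omega(s)$ could vanish and conditioning would be undefined at those states.
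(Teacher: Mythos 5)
Your proof is correct and complete. The paper does not actually prove this proposition itself but imports it from \cite{mueller2022geometry}; your argument---rationality of $\rho^\omega$ via the Bellman flow equation $\rho^\omega(I-\gamma P_\omega)=(1-\gamma)\mu$, invertibility of $I-\gamma P_\omega$ from the spectral-radius bound, Cramer's rule, and then conditioning as a two-sided inverse through the factorization $\eta^\omega(s,a)=\rho^\omega(s)\,\omega(a|s)$---is the standard route and supplies exactly the content the citation stands in for. One small refinement to your closing remark: Assumption~\ref{ass:positivity} does slightly more than make the conditioning map well defined; it is precisely what makes $\omega\mapsto\eta^\omega$ injective in the first place (without it, two policies differing only on states with $\rho^\omega(s)=0$ produce the same $\eta$), although your proof accounts for this implicitly, since injectivity is deduced from the identity $G(F(\omega))=\omega$, whose cancellation of $\rho^\omega(s)$ is exactly where positivity is consumed.
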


This result shows that the (interior of the) set of policies and the (interior of the) state-action polytope are diffeomorphic. Hence, we can port the Riemannian geometry on any of the two sets to the other by using the pull back along $\pi\mapsto \eta$ or the conditioning map $\eta\mapsto \pi$. 

\section{Natural gradients}

In this section we provide some background on the notion of natural gradients. 

\subsection{Definition and general properties of natural gradients}

In many applications, one aims to optimize a model parameter $\theta$ with respect to an objective function $\ell$ that is defined on a model space $\mathcal{M}$, as illustrated in Figure~\ref{fig:parametricModels}. 
\begin{figure} 
    \centering
    \begin{tikzpicture}[node distance=1cm, auto,]
    \node[punkt] (parameter) {Parameter space \(\Theta\subseteq\mathbb R^p\)};
    \node[punkt, inner sep=5pt,right=2cm of parameter] (model) {Model space \(\mathcal M\)}
    edge[pil, <-] node[auto] {\(P\)} (parameter.east);
    \node[punkt, inner sep=5pt,below=2cm of model] (formidler) {Reals \(\mathbb R\)}
    edge[pil, <-] node[auto] {\(\ell\)}  (model.south)
    edge[pil, <-] node[auto] {\(L\)}  (parameter.south);
\end{tikzpicture}
\caption{Schematic drawing of parametric models with an objective function \(\ell\) and resulting parameter objective function \(L\); note that neither the choice of geometry in the model space nor the factorization or the model space itself is uniquely determined by the objective function $L$.}
\label{fig:parametricModels}
\end{figure}
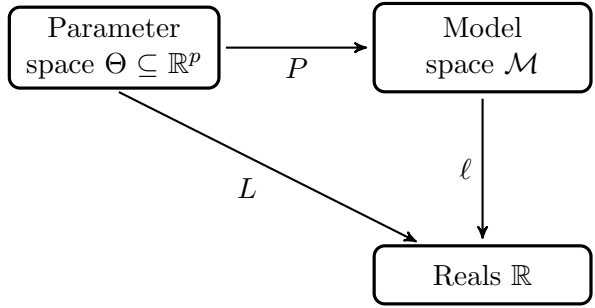
This general setup, with an objective function that factorizes as $L(\theta)=\ell(P(\theta))$, covers several usual parameter estimation and supervised learning cases, and also problems such as the numerical solution of PDEs with neural networks or policy optimization in MDPs and reinforcement learning. 
Naively, the optimization problem can be approached with first order methods, computing the gradients in parameter space with respect to the Euclidean geometry. 
However, this neglects the geometry of the parametrized model $\mathcal M_\Theta = P(\Theta)$, which is often seen as a disadvantage since it may lead to parametrization-dependent plateaus in the optimization landscape. 
At the same time, the biases that particular parametrizations can introduce into the optimization can be favorable in some cases. This is an active topic of investigation particularly in deep learning, where $P$ is often a highly non-linear function of $\theta$. 
At any rate, there is a good motivation to study of the effects of the parametrization and the possible advantages from incorporating the geometry of model space into the optimization procedure in parameter space. 

The \emph{natural gradient} as introduced in~\cite{amari1998natural} is a way to incorporate the geometry of the model space into the optimization procedure and to formulate iterative update directions that are invariant under reparametrizations. 
Although it has been most commonly applied in the context of parameter estimation under the maximum likelihood criterion, the concept of natural gradient has been formulated for general parametric optimization problems and in combination with arbitrary geometries. 
In particular, natural gradients have been applied to neural network training~\cite{park2000adaptive, martens2015optimizing, desjardins2015natural, izadi2020optimization}, policy optimization~\citep{kakade2001natural, peters2003reinforcement, morimura2008new} and inverse problems~\cite{nurbekyan2022efficient}. 
Especially in the latter case, different notions of natural gradients have been introduced. A version that incorporates the geometry of the \emph{sample space} are natural gradients based on an optimal transport geometry in model space~\cite{Li2018natural,Malago2018Wasserstein,Arbel2020Kernelized}. 
We shall discuss natural gradients in a way that emphasizes that even for a specific problem there may not be a unique natural gradient. This is because both the factorization $L(\theta)=\ell(P(\theta))$ of the objective as well as what should be considered a natural geometry in model space may not be unique. 

But what is it that makes a gradient or update direction \emph{natural}? The general consensus is that it should be invariant under reparametrization to prevent artificial plateaus and provide consistent stopping criteria, and it should (approximately) correspond to a gradient update with respect to the geometry in the model space. 
We now give the formal definition of the natural gradient with respect to a given factorization and a geometry in model space that we adopt in this work, which can be shown to satisfy the desired properties. 

\begin{definition}[General natural gradient]
\label{def:ng}
Consider the problem of optimizing an objective $L\colon\Theta\to\mathbb R$, where the \emph{parameter space} $\Theta\subseteq\mathbb R^p$ is an open subset. 
Further, assume that the objective factorizes as $L = \ell\circ P$, where $P\colon \Theta\to\mathcal{M}$ is a \emph{model parametrization} with $\mathcal M$ a Riemannian manifold with Riemannian metric $g$, and $\ell\colon \mathcal{M}\to\mathbb{R}$ is a \emph{loss in model space}, as shown in Figure~\ref{fig:parametricModels}. 
For $\theta\in\Theta$ we define the Gram matrix $G(\theta)_{ij}\coloneqq g_{P(\theta)}(dP_\theta e_i, dP_\theta e_j)$ and call $\nabla^N L(\theta) \coloneqq G(\theta)^+ \nabla L(\theta)$ the \emph{natural gradient (NG) of $L$ at $\theta$ with respect to the factorization $L = \ell\circ P$ and the metric $g$}. 
\end{definition}

\paragraph{Natural gradient as best improvement direction} 
Consider a parametrization $P\colon\Theta\to\mathcal M$ with image $\mathcal M_\Theta = P(\Theta)$, where $\mathcal M$ is a Riemannian manifold with metric $g$. 
Let us fix a parameter \(\theta\in\Theta\) and set $p\coloneqq P(\theta)$. 
Moving in the direction \(v\in T_\theta\Theta\) in parameter space results in moving in the direction \(w = dP_\theta v \in T_{p}\mathcal M\) in model space. The space of all directions that can result in this way is the generalized tangent space \(T_\theta\mathcal M_\Theta\coloneqq\operatorname{range}(d_\theta P) \subseteq T_p\mathcal M\). 
Hence, the best direction one can take on \(\mathcal M_\Theta\) by infinitesimally varying the parameter $\theta$ is given by 
    \[ 
    \underset{w\in T_\theta\mathcal M_\Theta, g_p(w, w) = 1}{\arg\max} \partial_w \ell(p),  
    \]
which is equal (up to normalization) to the projection $\Pi_{T_\theta \mathcal M_\Theta}(\nabla^g \ell(p))$ of the Riemannian gradient \(\nabla^g \ell(p)\) onto \(T_\theta\mathcal M_\Theta\). 
Moving in the direction of the natural gradient in parameter space results in the optimal update direction over the generalized tangent space $T_\theta\mathcal M_\Theta$ in model space. 

\begin{theorem}[Natural gradient leads to steepest descent in model space]\label{thm:NGinModelSpace}
Consider the settings of Definition~\ref{def:ng}, where $\mathcal M$ is a Riemannian manifold with metric $g$. 
Let $\nabla^NL(\theta)\coloneqq G(\theta)^+\nabla_\theta L(\theta)$ denote the natural gradient with respect to this factorization.
Then it holds that 
\[ 
dP_\theta (\nabla^NL(\theta)) = \Pi_{T_\theta \mathcal M_\Theta}( \nabla^g \ell(P(\theta))). 
\]
\end{theorem}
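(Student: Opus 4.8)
The plan is to reduce the statement to a linear-algebra identity about the Moore--Penrose pseudoinverse at the single point $p \coloneqq P(\theta)$. Since both sides live in $T_p\mathcal M$, I would fix a basis of $T_p\mathcal M$ (via a chart, or an arbitrary frame) and represent the metric $g_p$ by a symmetric positive-definite matrix $G_p$, the differential $dP_\theta$ by its Jacobian matrix $A$, and the Euclidean gradient of $\ell$ at $p$ by $\nabla^{\mathrm{eucl}}\ell(p)$, so that the Riemannian gradient is $\nabla^g\ell(p) = G_p^{-1}\nabla^{\mathrm{eucl}}\ell(p)$.

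First I would translate each ingredient into matrix form. By the chain rule $\nabla_\theta L(\theta) = A^\top \nabla^{\mathrm{eucl}}\ell(p) = A^\top G_p\,\nabla^g\ell(p)$, and by definition the Gram matrix is $G(\theta) = A^\top G_p A$. Substituting into $\nabla^N L(\theta) = G(\theta)^+\nabla_\theta L(\theta)$ and applying $dP_\theta$ gives
\[
dP_\theta\bigl(\nabla^N L(\theta)\bigr) = A\,(A^\top G_p A)^+ A^\top G_p\,\nabla^g\ell(p).
\]
Thus the theorem reduces to showing that the matrix $M \coloneqq A(A^\top G_p A)^+ A^\top G_p$ equals the $g_p$-orthogonal projector onto $\operatorname{range}(A) = T_\theta\mathcal M_\Theta$.

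To identify $M$ as this projector I would verify its two defining properties: (i) $\operatorname{range}(M)\subseteq\operatorname{range}(A)$, which is immediate; and (ii) for every $v$ the residual $v - Mv$ is $g_p$-orthogonal to $\operatorname{range}(A)$, i.e.\ $A^\top G_p(v-Mv)=0$. Writing $B \coloneqq A^\top G_p A$, we have $A^\top G_p(v-Mv) = A^\top G_p v - BB^+ A^\top G_p v$, and since $BB^+$ is the Euclidean projector onto $\operatorname{range}(B)$, property (ii) holds precisely when $A^\top G_p v\in\operatorname{range}(B)$ for all $v$.

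The key step, and the main obstacle since $A=dP_\theta$ need not be injective so that $G(\theta)$ may be singular, is the rank identity $\operatorname{range}(A^\top G_p A)=\operatorname{range}(A^\top)$. I would derive this from positive-definiteness of $G_p$: if $A^\top G_p A x = 0$ then $(Ax)^\top G_p (Ax)=0$ forces $Ax=0$, so $\ker(A^\top G_p A)=\ker A$ and hence $\operatorname{rank}(A^\top G_p A)=\operatorname{rank} A=\operatorname{rank} A^\top$; together with the trivial inclusion $\operatorname{range}(A^\top G_p A)\subseteq\operatorname{range}(A^\top)$ this yields equality. Since $G_p$ is invertible we also have $\operatorname{range}(A^\top G_p)=\operatorname{range}(A^\top)$, so $A^\top G_p v\in\operatorname{range}(B)$ always, establishing (ii). Because properties (i) and (ii) uniquely characterize the $g_p$-orthogonal projection onto $\operatorname{range}(A)$, this identifies $M=\Pi_{T_\theta\mathcal M_\Theta}$ and completes the proof. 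Finally I would remark that the conclusion is basis-independent, as both $dP_\theta(\nabla^N L(\theta))$ and the projection are coordinate-free, so the choice of frame is harmless.
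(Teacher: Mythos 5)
Your proof is correct. Note, however, that the paper does not prove Theorem~\ref{thm:NGinModelSpace} at all: it defers to the literature, citing \cite{amari2016information} for invertible Gram matrices and \cite{van2022invariance} for the singular case, so your argument supplies a self-contained proof of precisely what the paper outsources. Your reduction via the chain rule $\nabla L(\theta)=A^\top G_p\nabla^g\ell(p)$ and $G(\theta)=A^\top G_pA$ to the matrix identity $A(A^\top G_pA)^+A^\top G_p=\Pi^{g_p}_{\operatorname{range}(A)}$ is the standard route in the invertible case; what goes beyond the textbook argument is exactly your rank identity $\operatorname{range}(A^\top G_pA)=\operatorname{range}(A^\top)$, deduced from positive definiteness of $g_p$, which is what guarantees $BB^+A^\top G_pv=A^\top G_pv$ and hence the $g_p$-orthogonality of the residual when $dP_\theta$ fails to be injective. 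That singular case is the one that actually matters for this paper: the tabular softmax parametrization~\eqref{eq:soft-max} is overparametrized, so its Gram matrices are genuinely singular, as the paper itself notes in the remark closing Section~\ref{sec:convergenceDiscrete}. One further observation: your argument only uses that $BB^+$ fixes $\operatorname{range}(B)$, a property shared by any generalized inverse satisfying $BB^-B=B$, not just the Moore--Penrose inverse; this is consistent with, and in fact substantiates, the paper's remark in the introduction that most of the analysis does not depend on the specific choice of pseudoinverse.
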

For invertible Gram matrices $G(\theta)$ this result is well known \cite[Subsection 12.1.2]{amari2016information}; for singular Gram matrices we refer to~\cite[Theorem 1]{van2022invariance}. 

\subsection{Choice of a geometry in model space}

\paragraph{Invariance axiomatic geometries} 
A celebrated theorem by Chentsov~\cite{cencov1982} characterizes the Fisher metric of statistical manifolds with finite sample spaces as the unique metric (up to multiplicative constants) that is invariant with respect to congruent embeddings by Markov mappings. 
A generalization of Chentsov's result for arbitrary sample spaces was given by Ay et al.~\cite{ay2017information}. 

Given two Riemannian manifolds $(\mathcal{E},g)$, $(\mathcal{E}',g')$ and an embedding $f\colon \mathcal{E}\to\mathcal{E}'$, the metric is said to be invariant if $f$ is an isometry, meaning that 
$$
g_p(u,v) = (f^\ast g')_p(u,v):= g'_{f(p)}(f_\ast u, f_\ast v) , \quad \text{for all $p\in\mathcal{E}$ and $u,v\in T_p\mathcal{E}$},  
$$
where $f_\ast\colon T_p\mathcal{E}\to T_{f(p)}\mathcal{E}'$ is the pushforward of $f$. 
And a congruent Markov mapping is in simple terms a linear map $p\mapsto  M^T p$, where $M$ is a row stochastic partition matrix, i.e., a matrix of non-negative entries with a single non-zero entry per column and entries of each row adding to one. Such a mapping has the natural interpretation of splitting each elementary event into several possible outcomes with fixed conditional probabilities. 
By Chentsov's theorem, requiring invariance with respect to any such mapping results in a single possible choice for the metric (up to multiplicative constants). We recall that on the interior of the probability simplex $\Delta_\SS$ the Fisher metric is given by 
$$
g_p(u,v) = \sum_{s\in\SS} \frac{u_s v_s}{p_s}, \quad \text{for all $u,v\in T_p\Delta_\SS$.}
$$

Based on this approach, Campbell~\cite{Campbell} characterized the set of invariant metrics on the set of non-normalized positive measures with respect to congruent embeddings by Markov mappings. This results in a family of metrics which restrict to the Fisher metric (up to a multiplicative constant) over the probability simplex. 
Following this line of ideas, Lebanon~\cite{Lebanon2005AxiomaticGO} characterized a class of invariant metrics of positive matrices that restrict to products of Fisher metrics over stochastic matrices.\footnote{For Riemannian manifolds $(\mathcal{M}_1,g_1)$ and $(\mathcal{M}_2,g_2)$, the product metric on $\mathcal{M}_1\times\mathcal{M}_2$ is defined by $g(u_1+u_2, v_1+v_2)=g_1(u_1,v_1)+g_2(u_2,v_2)$.} 
The maps considered by Lebanon do not map stochastic matrices to stochastic matrices, which motivated \cite{montufar2014fisher} to investigate a natural class of mappings between conditional probabilities. 
They showed that requiring invariance with respect to their proposed mappings singles out a family of metrics that correspond to products of Fisher metrics on the interior of the conditional probability polytope, 
$$
g_\pi(u,v) = \sum_{s\in\SS} \frac{1}{|\SS|}\sum_{a\in\AA} \frac{u_{sa} v_{sa}}{\pi_{sa}}, \quad \text{for all $u,v\in T_\pi \Delta^\SS_\AA$} , 
$$
up to multiplicative constants. This work also offered a discussion of metrics on general polytopes and weighted products of Fisher metrics, which correspond to the Fisher metric when the conditional probability polytope is embedded in the joint probability simplex by way of providing a marginal distribution.

\paragraph{Hessian geometries}

Instead of characterizing the geometry of model space $\mathcal M$ via an invariance axiomatic, one can select a metric based on the optimization problem at hand. 
For example, it is well known that the Fisher metric is the local Riemannian metric induced by the Hessian of the KL-divergence in the probability simplex. 
Hence, if the objective function is a KL-divergence, choosing the Fisher metric yields preconditioners that recover the inverse of the Hessian at the optimum, which can yield locally quadratic convergence rates. 
More generally, if the objective $\ell\colon\mathcal M\to\mathbb R$ has a positive definite Hessian at every point, it induces a Riemannian metric via 
$$
g_p(v,w) = v^\top \nabla^2\ell(p) w
$$ 
in local coordinates, which we call the \emph{Hessian geometry} induced by $\ell$ on $\mathcal{M}$; see~\cite{amari2010information, shima2007geometry}. 

\begin{example}[Hessian geometries]
The following Riemannian geometries are induced by strictly convex functions.
\begin{enumerate}
    \item \emph{Euclidean geometry:} The Euclidean geometry on $\mathbb R^d$ is induced by the 
    squared Euclidean norm $x\mapsto \sum_i x_i^2$. 
    \item \emph{Fisher geometry:} The Fisher metric on $\mathbb R_{>0}^{d}$ is induced by the negative entropy $x\mapsto \sum_i x_i \log(x_i)$. 
    \item \emph{Itakura-Saito:} The logarithmic barrier function $x\mapsto \sum_i\log(x_i)$ of the positive cone $\mathbb R_{>0}^d$ yields the Itakura-Saito metric (see the next item). 
    \item \emph{$\sigma$-geometries: } All of the above examples can be interpreted as special cases of a parametric family of Hessian metrics. More precisely, if we let 
    \begin{equation}\label{eq:sigmaFunctions}
    \phi_\sigma(x)\coloneqq \begin{cases}
\sum_i x_i\log(x_i) \quad & \text{if } \sigma = 1 \\
-\sum_i \log(x_i) \quad & \text{if } \sigma = 2 \\
\frac{1}{(2-\sigma)(1-\sigma)}\sum x_i^{2-\sigma} \quad & \text{otherwise}, 
\end{cases}
\end{equation}
then the resulting Riemannian metric on $\mathbb R^{d}$ for $\sigma\in(-\infty, 0]$ and on $\mathbb R_{>0}^d$ for $\sigma\in(0, \infty)$ is given by
\begin{equation}\label{eq:sigmaMetric}
    g^\sigma_x(v, w) = \sum_{i} \frac{v_i w_i}{x_i^\sigma}.
\end{equation}
This recovers the Euclidean geometry for $\sigma=0$, 
the Fisher metric for $\sigma=1$, and the Itakura-Saito metric for $\sigma=2$. 
Note that these geometries are closely related to the so-called $\beta$-divergences~\cite{amari2010information}, which are the Bregman divergences of the functions $\phi_\sigma$ for $\beta = 1-\sigma$. 
We use $\sigma$ instead of $\beta$ in order to avoid confusion with our notation for the observation kernel $\beta$ in a POMDP. 
    
    \item \emph{Conditional entropy: } Given two finite sets $\mathcal X, \mathcal Y$ and a probability distribution $\mu$ in $\Delta_{\mathcal{X}\times\mathcal{Y}}$ we can consider the conditional entropy 
    $\phi_C(\mu) \coloneqq H(\mu|\mu_X) = H(\mu) - H(\mu_X)$ from~\eqref{eq:conditionalEntropy}. 
    This is a convex function on the simplex $\Delta_{\mathcal X\times\mathcal Y}$~\cite{neu2017unified}. 
    The Hessian of the conditional entropy is given by 
    \begin{equation}
       \partial_{(s,a)}\partial_{(s',a')} \phi_C(\mu) = \delta_{xx'}\left(\delta_{yy'} \mu(x,y)^{-1} - \mu_X(x)^{-1} \right),  \label{eq:conditional-entropy}
    \end{equation}
    as can be verified by explicit computation or the chain rule for Hessian matrices (see also proof of Theorem~\ref{thm:kakadeAsPullback}). 
    This Hessian does not induce a Riemannian geometry on the entire simplex since is not positive definite on the tangent space $T\Delta_{\mathcal X\times\mathcal Y}$, as can be seen by considering the specific choice $\mathcal X = \mathcal Y = \{1,2\}, \mu_{ij} = 1/4$ for all $i,j=1, 2$ and the tangent vector $v\in T_\mu \Delta_{\mathcal X\times\mathcal Y}$ given by $v_{ij} = (-1)^{i}$. 
    However, when fixing a marginal distribution $\nu\in\Delta_\mathcal X, \nu>0$, then the conditional entropy $\phi_C$ induces a Riemannian metric on the interior of $ P = \{\mu\in\Delta_{\mathcal X\times\mathcal Y} : \mu_X = \nu \}$. To see this we consider the diffeomorphism given by conditioning $\operatorname{int}(P)\to\operatorname{int}(\Delta_{\mathcal Y}^\mathcal X), \mu \mapsto \mu_{Y|X}$. It can be shown by explicit computation (analogous to the proof of Theorem~\ref{thm:kakadeAsPullback}) that the Hessian $\nabla^2\phi_C(\mu)$ is the metric tensor of the pull back of the Riemmanian metric 
\[ 
g\colon T\Delta_\mathcal Y^\mathcal X\times T\Delta_\mathcal Y^\mathcal X \to\mathbb R, \quad g_{\mu(\cdot|\cdot)}(v,w) \coloneqq \sum_{x} \nu(x) \sum_{y} \frac{v(x,y) w(x,y)}{\mu(y|x)}. 
\]
This argument can be adapted to sets $\{\mu\in\Delta_{\mathcal X\times\mathcal X} : \mu_X = \nu(\mu_{Y|X}) \}$, where $\nu\colon \operatorname{int}(\Delta_\mathcal Y^\mathcal X)\to\operatorname{int}(\Delta_\mathcal X)$ depends smoothly on the conditional $\mu_{Y|X}\in\Delta_\mathcal Y^\mathcal X$. 

We note that the Bregman divergence induced by the conditional entropy is the conditional relative entropy~\cite{neu2017unified}, 
        \begin{align*}
        D_{\phi_C}(\mu^{(1)}, \mu^{(2)}) 
        & =  D_{KL}(\mu^{(1)}, \mu^{(2)}) - D_{KL}(\mu^{(1)}_X, \mu^{(2)}_X) \\ 
        & = \sum_{x} \mu^{(1)}_X(x) D_{KL}(\mu^{(1)}(\cdot|x), \mu^{(2)}(\cdot|x)). 
        \end{align*}
\end{enumerate}
\end{example}

\paragraph{Local Hessian of Bregman divergences}
Let $\phi$ be a twice differentiable strictly convex function and denote its Bregman divergence with $D_\phi(x,y) = \phi(x) - \phi(y) - \langle \nabla \phi(y), x-y\rangle$. 
Then it holds that 
\begin{equation}\label{eq:hessianBregman}
    \nabla_y^2 D_\phi(x,y)|_{y=x} = \nabla^2_y D_\phi(y, x)|_{y=x} = \nabla^2 \phi(x). 
\end{equation}
To see this, we set $f(y) \coloneqq D_\phi(x,y)$. Then it is straight forward to see that $\nabla^2f(y) = \nabla^2\phi(y)$. 
Further, one can compute 
\begin{align*}
    \partial_{y_j} f(y) & = \partial_{y_j} \left(\phi(x) - \phi(y) - \sum_k \partial_{y_k}\phi(y) (x_k-y_k)\right) \\
    & =  - \partial_{y_j}\phi(y) + \sum_k \partial_{y_j}\partial_{y_k}\phi(y) (y_k-x_k) +  \partial_{y_j}\phi(y). 
\end{align*} 
Hence, we obtain 
\begin{align*}
    \partial_{y_i}\partial_{y_j} f(y) = - \partial_{y_i}\partial_{y_j}\phi(y) + \sum_k \partial_{y_i}\partial_{y_j}\partial_{y_k}\phi(y) (y_k-x_k) + \partial_{y_i}\partial_{y_j}\phi(y) + \partial_{y_i}\partial_{y_j}\phi(y), 
\end{align*}
and hence $\nabla^2f(x) = \nabla^2\phi(x)$. 

\paragraph{Connection to Gauss-Newton method}
Let $\phi$ be a twice differentiable strictly convex function.  
Then the Gram matrix of the Hessian geometry is given by 
    \[ 
    G(\theta) = DP(\theta)^\top \nabla^2\phi(P(\theta))DP(\theta). 
    \]
Hence $G^{-1}(\theta)$ can be interpreted as a Gauss-Newton preconditioner of the objective function $\phi\circ P$~\cite{martens2020new}. 
In particular, for the square loss we have $\phi(x) = \|x\|_2^2$, in which case $G(\theta)^{-1} = (DP(\theta)^\top DP(\theta))^{-1}$ is the usual nonlinear least squares Gauss-Newton preconditioner. 

\section{Natural policy gradient methods}

In this section we give a brief overview of different notions of policy gradient methods that have been proposed in the literature and study their associated geometries in state-action space. 
Policy gradient methods~\cite{williams1992simple, konda1999actor, sutton1999policy, Marbach2001Simulation, baxter2000reinforcement} offer a flexible approach to reward optimization. They have been used in robotics~\cite{peters2003reinforcement} and have been combined with deep neural networks~\cite{silver2016mastering, silver2017mastering,shao2019survey}. 
In the context of MDPs there are multiple notions of natural policy gradients. 
For instance, one may choose to use an optimal transport geometry in model space resulting in Wasserstein natural policy gradients~\cite{moskovitz2021efficient}. 
Most important to our discussion, there are different possible choices for the model space. 
One obvious candidate is the policy space $\Delta_\AA^\SS$, which was used by Kakade~\cite{kakade2001natural}. 
However the objective function $R(\pi)$ is a rational non-convex function over this space an thus requires a delicate analysis. 
A second candidate, which was proposed by Morimura et al.~\cite{morimura2008new}, is the state-action space $\mathcal N\subseteq\Delta_{\SS\times\AA}$, for which the objective becomes a rather simple, linear function. 
By Proposition~\ref{prop:birational} the two model spaces $\Delta^\SS_\AA$ and $\mathcal{N}$ are diffeomorphic under mild assumptions, which allows us to study any NPG method defined with respect to the policy space in state-action space. 
Because of the simplicity of the objective function in state-action space, we propose to study the evolution of NPG methods in this space. 
As we will see, this has the added benefit that it allows us to interpret several of the existing NPG methods as being induced by Hessian geometries. Based on this observation we can conduct a relatively simple convergence analysis for these methods. 
Finally, we propose a class of policy gradients closely related to $\beta$-divergences that interpolate between NPG arising from logarithmic barriers, entropic regularization and the Euclidean geometry. 

\subsection{Policy gradients}

Throughout the section, we consider parametric policy models $P\colon\Theta\to\Delta_\mathcal A^\mathcal S$ and write $\pi_\theta = P(\theta)\in\Delta_\mathcal A^\mathcal S$ for the policy arising from the parameter $\theta$. We denote the corresponding state-action and state frequencies by $\eta_\theta$ and $\rho_\theta$. Finally, in slight abuse of notation we write $R(\theta)$ for the expected infinite-horizon discounted reward obtained by the policy $\pi_\theta$. 
The \emph{vanilla policy gradient (vanilla PG)} method is given by the iteration
\begin{equation}
    \theta_{k+1}\coloneqq \theta_k + \Delta t \nabla R(\theta_k),
\end{equation}
where $\Delta t>0$ is the step size. 

For $\gamma\in(0,1)$, the reward function $\pi\mapsto R(\pi)$ is a rational function. 
Hence, in principle it can be differentiated using any automatic differentiation method. One can use the celebrated policy gradient theorem and use matrix inversion to compute the parameter update. 

\begin{theorem}[Policy gradient theorem]
Consider an MDP $(\SS, \AA, \alpha, r), \gamma\in[0,1)$ and a parametrized policy class. It holds that
\begin{align*}
    \partial_{\theta_i} R(\theta) = \sum_{s} \rho_\theta(s) \sum_{a}\partial_{\theta_i} \pi_\theta(a|s) Q^{\pi_\theta}(s, a) = \sum_{s, a}\eta_\theta(s, a) \partial_{\theta_i} \log(\pi_\theta(a|s)) Q^{\pi_\theta}(s, a),
\end{align*}
where $Q^\pi\coloneqq(I-\gamma P_\pi)^{-1}r\in\mathbb R^{\mathcal S\times\mathcal A}$ is the state-action value function.
\end{theorem}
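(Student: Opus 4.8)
The plan is to prove the policy gradient theorem in two stages: first establish the Bellman-type expression for $\partial_{\theta_i} R(\theta)$ in terms of the state distribution $\rho_\theta$ and the state-action value function $Q^{\pi_\theta}$, and then recognize the second form as a mere rewriting of the first using the log-derivative identity. Throughout I would suppress the $\theta$-dependence notationally where harmless and write $\partial_i \coloneqq \partial_{\theta_i}$.

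The core of the argument is the first equality. I would start from the factorization of the reward through the value function, namely $R(\theta) = \sum_s \mu(s) V^{\pi_\theta}(s)$ (where $\mu$ is the initial distribution), together with the Bellman equation $V^\pi(s) = \sum_a \pi(a|s)\big(r(s,a) + \gamma \sum_{s'}\alpha(s'|s,a) V^\pi(s')\big)$, equivalently $V^\pi(s) = \sum_a \pi(a|s) Q^\pi(s,a)$ with $Q^\pi(s,a) = r(s,a) + \gamma\sum_{s'}\alpha(s'|s,a)V^\pi(s')$. Differentiating the Bellman recursion with respect to $\theta_i$ and applying the product rule yields a term where the derivative falls on $\pi$ and a term where it falls on $V^\pi$ inside the discounted transition. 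The latter term reproduces a $\gamma$-weighted copy of $\partial_i V^\pi$ at the next state, so unrolling the recursion geometrically collects a factor $(I - \gamma P_\pi)^{-1}$; after taking the $\mu$-weighted sum over states this is exactly what produces the discounted state-visitation weights $\rho_\theta(s)$ (up to the $(1-\gamma)$ normalization built into $\eta^\pi$ and $\rho^\pi$). This step is the standard ``differentiate the Bellman equation and telescope'' computation; I would present the key recursion and the resolvent identity $\sum_{t\ge0}\gamma^t P_\pi^t = (I-\gamma P_\pi)^{-1}$ rather than grinding through every index.

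The second equality is then elementary: I would substitute $\eta_\theta(s,a) = \rho_\theta(s)\pi_\theta(a|s)$ from the factorization recorded earlier in the excerpt, and use $\partial_i \pi_\theta(a|s) = \pi_\theta(a|s)\,\partial_i \log \pi_\theta(a|s)$ wherever $\pi_\theta(a|s) > 0$ (the log-derivative trick), which converts $\sum_s \rho_\theta(s)\sum_a \partial_i \pi_\theta(a|s) Q^{\pi_\theta}(s,a)$ into $\sum_{s,a} \eta_\theta(s,a)\,\partial_i\log\pi_\theta(a|s)\, Q^{\pi_\theta}(s,a)$. One should note that the identity holds termwise at zero-probability actions as well, since $\partial_i \pi_\theta(a|s)$ appears directly in the first form and the product $\pi_\theta(a|s)\partial_i\log\pi_\theta(a|s)$ agrees with it under the usual smoothness of the parametrization.

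The main obstacle is the bookkeeping in the telescoping step: one must track the $(1-\gamma)$ normalizations and verify that the resolvent $(I-\gamma P_\pi)^{-1}$ acting on $\mu$ indeed yields precisely the discounted occupancy $\rho_\theta$ appearing in the statement, rather than an unnormalized or differently-scaled version. Since $\gamma\in(0,1)$ guarantees $I-\gamma P_\pi$ is invertible (its spectral radius is bounded by $\gamma<1$ because $P_\pi$ is stochastic), the resolvent is well defined and the geometric series converges absolutely, so the interchange of differentiation and the infinite sum defining $R$ is justified. Everything else is a routine application of the product rule and the definitions already fixed in the excerpt.
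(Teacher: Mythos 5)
The paper does not actually prove this statement: it is recalled as the celebrated (classical) policy gradient theorem and used as a black box, so there is no internal proof to compare yours against. Judged on its own merits, your argument is the standard and correct one: differentiate the Bellman recursion, unroll the resulting geometric recursion into the resolvent $(I-\gamma P_\pi)^{-1}$, identify the $\mu$-weighted resolvent with the discounted visitation weights, and then pass to the log-derivative form. Two bookkeeping points deserve care if you write this out in the paper's conventions. First, the paper's value function $V^\pi(s) = R^{\delta_s}_\gamma(\pi)$ carries the $(1-\gamma)$ normalization, while its $Q^\pi = (I-\gamma P_\pi)^{-1}r$ does not; consequently the two identities you quote, $R(\theta)=\sum_s\mu(s)V^{\pi_\theta}(s)$ and $V^\pi(s)=\sum_a\pi(a|s)Q^\pi(s,a)$, cannot both hold literally in these conventions. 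The recursion must be run with the unnormalized value $\tilde V^\pi = V^\pi/(1-\gamma)$, and the resulting factor $(1-\gamma)$ is exactly absorbed by $\rho_\theta(s) = (1-\gamma)\sum_{t\ge 0}\gamma^t\,\mathbb P^{\pi_\theta,\mu}(s_t=s)$ --- you flagged precisely this as the main obstacle, and it does resolve cleanly, so this is a gap in bookkeeping only, not in the idea. Second, the log-derivative form requires $\pi_\theta(a|s)>0$; this is automatic for the regular parametrizations and tabular softmax policies on which the paper focuses, but is worth stating explicitly since the theorem is phrased for an arbitrary parametrized policy class, and your remark that the identity ``holds termwise at zero-probability actions'' is not meaningful there because $\partial_{\theta_i}\log\pi_\theta(a|s)$ is simply undefined at such actions.
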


In a reinforcement learning setup, one does not have direct access to the transition $\alpha$ and hence to $P_\pi$ \eqref{eq:transition-kernel} nor $Q^\pi$, and sometimes even $\SS$ is not known a priori. 
In this case, one has to estimate the gradient from interactions with the environment~\cite{baxter2000reinforcement, baxter2001infinite, morimura2010derivatives, sutton2018reinforcement}.
In this work, however, we study the planning problem in MDPs, i.e., we assume that we have access to exact gradient evaluations. 

\paragraph{Policy parametrizations}
Many results on the convergence of policy gradient methods have been providede for \emph{tabular softmax policies}. The tabular softmax parametrization is given by 
\begin{equation}  
    \pi_\theta(a|s) \coloneqq \frac{e^{\theta_{sa}}}{\sum_{a'}e^{\theta_{sa'}}} \quad \text{for all } a\in\mathcal A, s\in\SS , 
    \label{eq:soft-max}
\end{equation}
for $\theta\in\mathbb R^{\SS\times\AA}$.
One benefit of tabular softmax policies is that they parametrize the interior of the policy polytope $\Delta_\AA^\SS$ in a regular way, i.e., such that the Jacobian has full rank everywhere, and the parameter is unconstrained in an affine space. 

\begin{definition}[Regular policy parametrization]
We call a policy parametrization $\mathbb R^p\to\operatorname{int}(\Delta_{\mathcal A}^\mathcal S), \theta \mapsto \pi_\theta$ \emph{regular} if it is differentiable and satisfies 
\begin{equation}\label{eq:regularparametrization}
    \operatorname{span}\{\partial_{\theta_{i}}\pi_\theta : i = 1, \dots, p\} = T_{\pi_\theta} \Delta_\mathcal A^\mathcal S = \prod_{s\in\mathcal S} T_{\pi_\theta(\cdot|s)}\Delta_\mathcal A \quad \text{for every } \theta\in \mathbb{R}^p.
\end{equation}
\end{definition}
We will focus on regular policy parametrizations. Nonetheless, we observe that policy optimization with constrained search variables can also be an attractive option and refer to \cite{muller2022solving} for a discussion in context of POMDPs.

\paragraph{Regularization in MDPs} 
In practice, the reward function is often regularized as $R_\lambda = R - \lambda\psi$. This is often motivated to encourage exploration~\cite{williams1992simple} and has also been shown to lead to fast convergence for strictly convex regularizers $\psi$~\cite{mei2020global, cen2021fast}. 
One popular regularizer is the conditional entropy in state-action space, see~\cite{neu2017unified, mei2020global, cen2021fast}, 
\begin{equation}\label{eq:conditionalEntropySA}
    \psi_C(\theta) = \sum_{s} \rho_\theta(s) \sum_{a}\pi_\theta(a|s) \log(\pi_\theta(a|s)) = H(\eta_\theta) - H(\rho_\theta),
\end{equation}
which has also been used to successfully design trust region and proximal methods for reward optimization~\cite{schulman2015trust, schulman2017proximal}. 
It is also possible to take the functions $\phi_\sigma$ defined in~\eqref{eq:sigmaFunctions} as regularizers. 
This includes the entropy function, which is studied in state-action space in~\cite{neu2017unified} and logarithmic barriers, which are studied in policy space in~\cite{agarwal2021theory}. 
Introducing a regularizer changes the optimization problem and usually also the optimizer. 
The difference introduced by this regularization can be estimated in terms of the regularization strength $\lambda$. 
For logarithmic barriers in state-action space, this follows from standard estimates for interior point methods~\cite{boyd2004convex}. For entropic regularization in state-action space, this is elaborated in~\cite{weed2018explicit}, and for 
the conditional entropy this is done in~\cite{mei2020global, cen2021fast}. We will see later that several of these regularizers lead to Hessian geometries in state-action space that correspond to different natural gradients that have been proposed in the context of policy optimization. 

\paragraph{Partially observable systems}
Although we will only consider parametric policies in fully observable MDPs, our discussion covers the case of POMDPs in the following way. 
Any parametric family of observation-based policies $\{\pi_\theta:\theta\in\Theta\} \subseteq \Delta_\AA^\OO$ induces a parametric family of state-based policies $\{\pi_\theta\circ\beta : \theta\in\Theta\}$. 
Hence, the policy gradient theorem as well as the definitions of natural policy gradients directly extend to the case of partially observable systems. 
However, the global convergence guarantees in Section~\ref{sec:convergenceFlows} and Section~\ref{sec:convergenceDiscrete} do not carry over to POMDPs since they assume tabular softmax (state) policies. 

\paragraph{Projected policy gradients}
An alternative to using parametrizations with the property that any unconstrained choice of the parameter leads to a policy, 
is to use constrained parametrizations and projected gradient methods. 
For instance, one can parametrize policies in $\Delta^\SS_\AA$ by their constrained entries and use the iteration 
    \[ 
    \pi_{k+1} \coloneqq \Pi_{\Delta_{\AA}^\SS}(\pi_k + \Delta t G(\pi_k)^+ \nabla R(\pi)), 
    \]
where $\Pi_{\Delta_\AA^\SS}$ is the (Euclidean) projection to $\Delta_{\AA}^\SS$. 
We will not study projected policy gradient methods and refer to~\cite{agarwal2021theory, xiao2022convergence} for convergence rates of these methods. 

\subsection{Kakade's natural policy gradient}
\label{sec:kakades-metric} 

Kakade~\cite{kakade2001natural} proposed natural policy gradient based on a Riemannian geometry in the policy polytope $\Delta_\mathcal A^\mathcal S$. We will see that Kakade's NPG can be interpreted as the NPG induced by the Hessian geometry in state-action space arising from conditional entropy regularization of the linear program associated to MDPs. 
Kakade's idea was to mix the Fisher information matrices of the policy over the individual states according to the state frequencies, i.e., to use the following Gram matrix:  
\begin{align}\label{eq:defKakadeGram}
    \begin{split}
         G_K(\theta)_{ij} & = \sum_s \rho_\theta(s)\sum_{a} \pi_\theta(a|s) \partial_{\theta_i} \log(\pi_\theta(a|s)) \partial_{\theta_j}\log(\pi_\theta(a|s))  \\
         & = \sum_{s, a} \eta_\theta(s, a) \partial_{\theta_i}\log(\pi_\theta(a|s)) \partial_{\theta_j}\log(\pi_\theta(a|s)) \\
         & = \sum_s \rho_\theta(s)\sum_{a} \frac{\partial_{\theta_i} \pi_\theta(a|s) \partial_{\theta_j}\pi_\theta(a|s)}{\pi_\theta(a|s)}.
    \end{split}
\end{align}

\begin{definition}[Kakade's NPG and geometry in policy space]
We refer to the natural gradient $\nabla^K R(\theta) \coloneqq G_K(\theta)^+ \nabla_\theta R(\pi_\theta)$ as \emph{Kakade's natural policy gradient (K-NPG)}, where $G_K$ is defined in~\eqref{eq:defKakadeGram}. Hence, Kakade's NPG is the NPG induced by the factorization $\theta\mapsto \pi_\theta\mapsto R(\theta)$ and the Riemannian metric on $\operatorname{int}(\Delta_\mathcal A^\mathcal S)$ given by
\begin{align}\label{eq:kakadeGeometry}
    g^K_\pi(v, w) \coloneqq \sum_{s} \rho^\pi(s)\sum_{a} \frac{v(s, a) w(s, a)}{\pi(a|s)} \quad \text{for all } v, w\in T_\pi\Delta_\mathcal A^\mathcal S.
\end{align}
\end{definition}
Due to its popularity, this method is often referred to simply as \emph{the} natural policy gradient. We will call it Kakade's NPG in order to distinguish it from other NPGs. 

\begin{remark}
In \cite{kakade2001natural} the definition of $G_K$ was heuristically motivated by the fact that the reward is also a mix of instantaneous rewards according to the state frequencies, $R(\pi)=\sum_{s} \rho^\pi(s) \sum_a \pi(a|s) r(s,a)$. 
The invariance axiomatic approaches discussed in \cite{Lebanon2005AxiomaticGO,montufar2014fisher} also yield mixtures of Fisher metrics over individual states, which however do not fully recover Kakade's metric, since this would require a way to account for the particular process that gives rise to the stationary state distribution $\rho^\pi$. 
The works \cite{peters2003reinforcement,bagnell2003covariant,nagaoka2017exponential} argued that the Gram matrix $G_K$ corresponds to the limit of the Fisher information matrices of finite-path probability measures as the path length tends to infinity. 
\end{remark} 

\paragraph{Interpration as Hessian geometry of conditional entropy regularization}\label{subsubsec:KakadeGeomtry}
The metric $g^K$ on the conditional probability polytope $\Delta_{\mathcal A}^\mathcal S$ has been studied in terms of its invariances and its connection to the Fisher metric on finite-horizon path space~\cite{bagnell2003covariant, peters2003reinforcement, montufar2014fisher}. 
We offer a different interpretation of Kakade's geometry by studying its counterpart in state-action space, which we show to be the Hessian geometry induced by the conditional entropy. 

\begin{theorem}[Kakade's geometry as conditional entropy Hessian geometry]\label{thm:kakadeAsPullback}
Consider an MDP $(\mathcal S, \mathcal A, \alpha)$ and fix $\mu\in\Delta_\mathcal S$ and $\gamma\in(0,1)$ such that Assumption~\ref{ass:positivity} holds. Then, Kakade's geometry on $\Delta_\AA^\SS$ is the pull back of the Hessian geometry induced by the conditional entropy on the state-action polytope $\mathcal N\subseteq\Delta_{\mathcal S\times\mathcal A}$ along $\pi\mapsto \eta^\pi$. In particular, Kakade's natural policy gradient is the natural policy gradient induced by the factorization $\theta\mapsto\eta_\theta\mapsto R(\theta)$ with respect to the conditional entropy Hessian geometry, i.e., 
\begin{align}\label{eq:comparisonKakadeMorimura}
\begin{split}
    G_K(\theta)_{ij}  &= \sum_{s,a}\frac{\partial_{\theta_{i}} \eta_\theta(s,a)\partial_{\theta_{j}} \eta_\theta(s,a)}{\eta_\theta(s,a)} - \sum_{s}\frac{\partial_{\theta_{i}} \rho_\theta(s)\partial_{\theta_{j}} \rho_\theta(s)}{\rho_\theta(s)} \\
    &= \sum_{s,a}\partial_{\theta_{i}} \log(\eta_\theta(s,a))\partial_{\theta_{j}}\log( \eta_\theta(s,a))\eta_\theta(s,a) \\
     &- \sum_{s}\partial_{\theta_{i}} \log(\rho_\theta(s))\partial_{\theta_{j}} \log(\rho_\theta(s))\rho_\theta(s).
    \end{split}
\end{align}
\end{theorem}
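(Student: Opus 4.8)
The plan is to take the pullback along $\pi\mapsto\eta^\pi$ literally and compute. By Proposition~\ref{prop:birational}, under Assumption~\ref{ass:positivity} the map $\pi\mapsto\eta^\pi$ is a diffeomorphism between the interiors of $\Delta_\AA^\SS$ and $\mathcal N$, so pulling back the conditional-entropy Hessian metric on $\mathcal N$ yields a genuine Riemannian metric on $\operatorname{int}(\Delta_\AA^\SS)$; it then suffices to evaluate its Gram matrix under a regular parametrization $\theta\mapsto\pi_\theta$ and identify it with Kakade's Gram matrix $G_K$ from~\eqref{eq:defKakadeGram}. Concretely, I would prove the chain of equalities in~\eqref{eq:comparisonKakadeMorimura} in two moves: first show that plugging the conditional-entropy Hessian into the pullback formula gives the first displayed line, and then collapse that line to $G_K(\theta)$.

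For the first move I would insert the Hessian~\eqref{eq:conditional-entropy} (with $\mathcal X=\SS$, $\mathcal Y=\AA$, and $\mu_X=\rho_\theta$) into the Gram matrix of the factorization $\theta\mapsto\eta_\theta\mapsto R(\theta)$, namely $G(\theta)_{ij}=\sum_{(s,a),(s',a')}\partial_{\theta_i}\eta_\theta(s,a)\,\nabla^2\phi_C(\eta_\theta)_{(s,a),(s',a')}\,\partial_{\theta_j}\eta_\theta(s',a')$. The Kronecker deltas split this into the diagonal part $\sum_{s,a}\eta_\theta(s,a)^{-1}\partial_{\theta_i}\eta_\theta(s,a)\partial_{\theta_j}\eta_\theta(s,a)$ and the marginal correction $-\sum_s\rho_\theta(s)^{-1}\big(\sum_a\partial_{\theta_i}\eta_\theta(s,a)\big)\big(\sum_{a'}\partial_{\theta_j}\eta_\theta(s,a')\big)$. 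Using the marginalization identity $\sum_a\partial_{\theta_i}\eta_\theta(s,a)=\partial_{\theta_i}\rho_\theta(s)$, the correction term becomes exactly $-\sum_s\rho_\theta(s)^{-1}\partial_{\theta_i}\rho_\theta(s)\partial_{\theta_j}\rho_\theta(s)$, giving the first line of~\eqref{eq:comparisonKakadeMorimura}; the second, logarithmic form is immediate from $\partial_{\theta_i}\log f=\partial_{\theta_i}f/f$.

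To collapse this to $G_K$ I would substitute $\eta_\theta(s,a)=\rho_\theta(s)\pi_\theta(a|s)$ and apply the product rule, $\partial_{\theta_i}\eta_\theta(s,a)=\partial_{\theta_i}\rho_\theta(s)\,\pi_\theta(a|s)+\rho_\theta(s)\,\partial_{\theta_i}\pi_\theta(a|s)$. Expanding the diagonal part produces four terms; after dividing by $\eta_\theta=\rho_\theta\pi_\theta$ and summing over $a$, the two cross terms vanish because $\sum_a\partial_{\theta_i}\pi_\theta(a|s)=\partial_{\theta_i}\sum_a\pi_\theta(a|s)=0$, the pure-$\rho$ term reduces to $\sum_s\rho_\theta(s)^{-1}\partial_{\theta_i}\rho_\theta(s)\partial_{\theta_j}\rho_\theta(s)$ (using $\sum_a\pi_\theta(a|s)=1$), and the pure-$\pi$ term gives $\sum_s\rho_\theta(s)\sum_a\pi_\theta(a|s)^{-1}\partial_{\theta_i}\pi_\theta(a|s)\partial_{\theta_j}\pi_\theta(a|s)$. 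The pure-$\rho$ contribution then cancels precisely against the marginal correction, leaving $G_K(\theta)$ as in~\eqref{eq:defKakadeGram}. The abstract statement that $g^K$ is the pullback metric follows because a regular parametrization spans $T_{\pi_\theta}\Delta_\AA^\SS$, so the identity of Gram matrices for all such parametrizations encodes the identity of the underlying bilinear forms.

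I do not expect a deep obstacle; the argument is careful bookkeeping driven by the two marginalization identities ($\sum_a\partial_{\theta_i}\eta_\theta=\partial_{\theta_i}\rho_\theta$ and $\sum_a\partial_{\theta_i}\pi_\theta=0$) together with the cancellation of the $\rho$-terms. The one conceptual point to get right is that the conditional-entropy Hessian is \emph{not} positive definite on the whole simplex tangent space, so I must justify that it restricts to a genuine metric on $T\mathcal N$. This is exactly the situation described in the conditional-entropy example: $\mathcal N$ has the form $\{\mu:\mu_X=\nu(\mu_{Y|X})\}$ with $\nu(\pi)=\rho^\pi$ depending smoothly on the conditional, and positive definiteness on $T\mathcal N$ is then guaranteed a posteriori by the computation itself, since the pullback equals $g^K$, which is manifestly positive definite on $\operatorname{int}(\Delta_\AA^\SS)$ under Assumption~\ref{ass:positivity} because the weights $\rho^\pi(s)$ are strictly positive.
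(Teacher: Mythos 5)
Your proof is correct, but it runs the computation in the opposite direction from the paper. The paper starts from Kakade's metric $g^K$ and pulls it back along the conditioning map $\eta\mapsto\pi(\cdot|\cdot)$ (the inverse of $\pi\mapsto\eta^\pi$), computing the metric tensor in state-action coordinates via $\partial_{(s,a)}\eta(\tilde a|\tilde s)=\delta_{s\tilde s}\bigl(\delta_{a\tilde a}\rho(\tilde s)^{-1}-\eta(\tilde s,\tilde a)\rho(\tilde s)^{-2}\bigr)$, which yields $G(\eta)_{(s,a),(s',a')}=\delta_{ss'}\bigl(\delta_{aa'}\eta(s,a)^{-1}-\rho(s)^{-1}\bigr)$; it then identifies this tensor entrywise with $\nabla^2\phi_C(\eta)$ using $\nabla^2$ of the negative entropy and the chain rule for the linear marginalization map $\eta\mapsto\rho$. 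You instead start from the Hessian $\nabla^2\phi_C$ (equation~\eqref{eq:conditional-entropy}, which the paper states beforehand), form the parametric Gram matrix, split it via the Kronecker structure and the identity $\sum_a\partial_{\theta_i}\eta_\theta(s,a)=\partial_{\theta_i}\rho_\theta(s)$ to obtain the first line of~\eqref{eq:comparisonKakadeMorimura}, and then collapse to $G_K$ by substituting $\eta_\theta=\rho_\theta\pi_\theta$, the product rule, and the normalization identities $\sum_a\pi_\theta(a|s)=1$, $\sum_a\partial_{\theta_i}\pi_\theta(a|s)=0$. The two routes buy slightly different things: the paper's tensor-level computation establishes the pullback statement directly and obtains~\eqref{eq:comparisonKakadeMorimura} as a byproduct, while yours makes~\eqref{eq:comparisonKakadeMorimura} the centerpiece and recovers the coordinate-free statement from equality of Gram matrices on a spanning set (a valid argument, since a symmetric bilinear form is determined by its Gram matrix on spanning vectors). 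Two remarks: first, note that~\eqref{eq:conditional-entropy} is itself justified in the paper by reference to this theorem's proof, so a fully self-contained version of your argument should include its one-line verification (diagonal Hessian of $\sum\eta\log\eta$ plus chain rule for the linear map $\eta\mapsto\rho$) — this is exactly the half of the paper's proof you skip by citing the formula. Second, your a posteriori positive-definiteness argument (the restriction of $\nabla^2\phi_C$ to $T\mathcal N$ is positive definite because its pullback equals $g^K$, which is manifestly positive definite under Assumption~\ref{ass:positivity}) is a point the paper leaves implicit, and is a worthwhile addition.
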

\begin{proof}
We can pull back the Riemannian metric on the policy polytope proposed by Kakade along the conditioning map 
to define a corresponding geometry in state-action space. 
The metric tensor in state-action space is given by 
\begin{align}
    \begin{split}
        G(\eta)_{(s, a), (s', a')} & = g^K_\pi(\partial_{(s,a)} \eta(\cdot|\cdot), \partial_{(s',a')} \eta(\cdot|\cdot)) \\ & =  \sum_{\tilde s, \tilde a} \rho(\tilde s) \frac{\partial_{(s,a)} \eta(\tilde a|\tilde s)\partial_{(s',a')} \eta(\tilde a|\tilde s)}{\eta(\tilde a|\tilde s)} \\
        & = \sum_{\tilde s, \tilde a} \rho(\tilde s)^2 \frac{\partial_{(s,a)} \eta(\tilde a|\tilde s)\partial_{(s',a')} \eta(\tilde a|\tilde s)}{\eta(\tilde s, \tilde a)}.
    \end{split}
\end{align}
Using $\partial_{(s,a)} \eta(\tilde a|\tilde s) = \partial_{(s,a)}( \eta(\tilde s, \tilde a) \rho(\tilde s)^{-1}) = \delta_{s\tilde s}(\delta_{a\tilde a} \rho(\tilde s)^{-1}- \eta(\tilde s, \tilde a) \rho(\tilde s)^{-2})$ we obtain 
\begin{align}\label{eq:metricTensorKakade}
    \begin{split}
        G(\eta)_{(s, a), (s', a')} & = 
        \delta_{ss'}\left(\delta_{aa'} \eta(s, a)^{-1} - \rho(s)^{-1} \right).
    \end{split}
\end{align}
We aim to show that $G(\eta) = \nabla^2\phi_C(\eta)$, where $\phi_C(\eta) = H(\eta) - H(\rho)$, where $\rho(s) = \sum_{a}\eta(s,a)$ denotes the state-marginal. Note that $\nabla^2 H(\eta) = \operatorname{diag}(\eta)$, which is the first term appearing in~\eqref{eq:metricTensorKakade}. 
For linear maps $g_A(x) = Ax$ the chain rule yields the expression  
\[ 
\partial_i\partial_j (f\circ g_A)(x) = \sum_{k,l} A_{ki} \partial_k\partial_l f(g_A(x)) A_{lj}. 
\]
Noting that $\rho$ is a linear function of $\eta$ we obtain
\[ \partial_{(s,a)}\partial_{(s', a')} H(\rho) = \sum_{\tilde s, \hat s} \delta_{\tilde s, s} \partial_{\tilde s}\partial_{\hat s} H(\rho) \delta_{\hat s, s'} = \delta_{ss'} \rho(s)^{-1}, \]
which is the second term in~\eqref{eq:metricTensorKakade}. Overall this implies $G(\eta) = \nabla^2\phi_C(\eta)$. 
\end{proof}

The Bregman divergence of the conditional entropy is the conditional relative entropy and has been studied as a regularizer for the linear program associated to MDPs in~\cite{neu2017unified}.

\begin{remark}\label{rem:kakadePolicyspace}
Kakade's NPG is known to converge at a locally quadratic rate under conditional entropy regularization~\cite{cen2021fast}, a regularizer which in policy space takes the form
    \[ 
    \psi(\pi) = \sum_{s} \rho^\pi(s) \sum_a \pi(a|s) \log(\pi(a|s)) = \sum_{s} \rho^\pi(s) H(\pi(\cdot|s)). 
    \]
Note however, by direct calculation, that Kakade's geometry in policy space $g^K$ defined in~\eqref{eq:kakadeGeometry} is not the Hessian geometry induced by $\psi$ in policy space, which would take the form
\begin{align*}
    \nabla^2\psi(\pi) 
    &= \sum_{s} \rho^\pi(s) \nabla^2 H(\pi(\cdot|s)) 
    + \sum_{s} (\nabla H(\cdot | s)^\top\nabla\rho^\pi(s) + \nabla H(\cdot | s)\nabla\rho^\pi(s)^\top) \\
     &
    + \sum_{s} H(\pi(\cdot|s)) \nabla^2\rho^\pi(s).
\end{align*}
Instead, the metric proposed by Kakade only considers the contribution of the first term, see~\eqref{eq:kakadeGeometry}. 
As we will see in Sections~\ref{sec:convergenceFlows} and~\ref{sec:convergenceDiscrete}, the interpretation of Kakade's NPG as a Hessian natural gradient induced by the conditional entropic regularization in state-action space allows for a great simplification of its convergence analysis. 
\end{remark}

\subsection{Morimura's natural policy gradient}\label{subsec:morimura}

In contrast to Kakade's approach, who proposed a mixture of Fisher metrics to obtain a metric on the conditional probability polytope $\Delta_\AA^\SS$, Morimura and co-authors~\cite{morimura2008new} proposed to work with the Fisher metric in state-action space $\Delta_{\SS\times\AA}$ to define a natural gradient for reward optimization. 
The resulting Gram matrix is given by the Fisher information matrix induced by the state-action distributions, that is $P(\theta)=\eta_\theta$ and 
\begin{equation}\label{eq:definitionMorimuraMatrix}
    G_M(\theta)_{ij} = \sum_{s, a} \partial_{\theta_i}\log(\eta_\theta(s, a)) \partial_{\theta_j}\log(\eta_\theta(s, a)) \eta_\theta(s, a).
\end{equation}

\begin{definition}[Morimura's NPG]
We refer to the natural gradient $\nabla^M R(\theta) \coloneqq G_M(\theta)^+ \nabla_\theta R(\pi_\theta)$ as \emph{Morimura's natural policy gradient (M-NPG)}, where $G_M$ is defined in~\eqref{eq:definitionMorimuraMatrix}. Hence, Morimura's NPG is the NPG induced by the factorization $\theta\mapsto \eta_\theta\mapsto R(\theta)$ and the Fisher metric on $\operatorname{int}(\Delta_{\mathcal S\times\mathcal A})$. 
\end{definition}
By~\eqref{eq:comparisonKakadeMorimura} the Gram matrix proposed by Morimura and co-authors and the Gram matrix proposed by Kakade are related to each other by 
    \[ 
    G_K(\theta) = G_{M}(\theta) - F_\rho(\theta), 
    \]
where $F_{\rho}(\theta)_{ij} = \sum_{s}\rho_\theta(s) \partial_{\theta_{i}} \log(\rho_\theta(s))\partial_{\theta_{j}} \log(\rho_\theta(s))$ denotes the Fisher information matrix of the state distributions. This relation is reminiscent of the chain rule for the conditional entropy and can be verified by direct computation; see~\cite{morimura2008new}. 
Where we have seen that Kakade's geometry in state-action space is the Hessian geometry of conditional entropy, the Fisher metric is known to be the Hessian metric of the entropy function~\cite{amari2010information}. 
Hence, we can interpret the Fisher metric as the Hessian geometry of the entropy regularized reward $\eta \mapsto \langle r, \eta\rangle - H(\eta)$. 

\subsection{General Hessian natural policy gradient} 
Generalizing the above definitions, we define general state-action space Hessian NPGs as follows.
Consider a twice differentiable function $\phi\colon\mathbb R_{>0}^{\SS\times\AA}\to\mathbb R$ such that  $\nabla^2\phi(\eta)$ is positive definite on $T_\eta\mathcal N = T\mathcal L\subseteq\mathbb R^{\mathcal S\times\mathcal A}$ for every $\eta\in \operatorname{int}(\mathcal N)$.
Then we set
\[ G_\phi(\theta)_{ij} \coloneqq \sum_{s, s',a, a'} \partial_{\theta_i} \eta_\theta(s,a) \partial_{(s,a)} \partial_{(s',a')}\phi(\eta_\theta) \partial_{\theta_j} \eta_\theta(s',a'), \]
which is the Gram matrix with respect to the Hessian geometry in $\mathbb R_{>0}^{\SS\times\AA}$. 

\begin{definition}[Hessian NPG]
We refer to the natural gradient $\nabla^\phi R(\theta) \coloneqq G_\phi(\theta)^+ \nabla_\theta R(\pi_\theta)$ as \emph{Hessian natural policy gradient with respect to $\phi$} or shortly \emph{$\phi$-natural policy gradient ($\phi$-NPG)}. 
\end{definition}

Leveraging results on gradient flows in Hessian geometries we will later provide global convergence guarantees including convergence rates for a large class of Hessian NPG flows covering Kakade's and Morimura's natural gradients as special cases. 
Further, we consider the family $\phi_\sigma$ of strictly convex functions defined in~\eqref{eq:sigmaFunctions}. 
With $G_\sigma(\theta)$ we denote the Gram matrix associated with the Riemannian metric $g^\sigma$, i.e.,
    \[ G_\sigma(\theta)_{ij} = \sum_{s,a} \frac{\partial_{\theta_i} \eta_\theta(s,a)\partial_{\theta_j} \eta_{\theta}(s,a)}{\eta_\theta(s,a)^\sigma}. \]

\begin{definition}[$\sigma$-NPG]
We refer to the natural gradient $\nabla^\sigma R(\theta) \coloneqq G_\sigma(\theta)^+ \nabla_\theta R(\pi_\theta)$ as the \emph{$\sigma$-natural policy gradient ($\sigma$-NPG)}. Hence, the $\sigma$-NPG is the NPG induced by the factorization $\theta\mapsto \eta_\theta\mapsto R(\theta)$ and the metric $g^\sigma$ on $\operatorname{int}(\Delta_{\mathcal S\times\mathcal A})$ defined in~\eqref{eq:sigmaMetric}. 
\end{definition}

For $\sigma=1$ we recover the Fisher geometry and hence Morimura's NPG; for $\sigma=2$ we obtain the Itakura-Saito metric; and for $\sigma=0$ we recover the Euclidean geometry. 
Later, we show that the Hessian gradient flows exist globally for $\sigma\in[1, \infty)$ and provide convergence rates depending on $\sigma$. 

\section{Convergence of natural policy gradient flows}\label{sec:convergenceFlows}

In this section we study the convergence properties of natural policy gradient flows arising from Hessian geometries in state-action space for fully observable systems and tabular softmax policies. Although we focus on this case, we observe that our results directly extend to regular parametrizations of the interior of the policy polytope $\Delta_\mathcal A^\mathcal S$. 
Leveraging tools from the theory of gradient flows in Hessian geometries established in~\cite{alvarez2004hessian} we show $O(t^{-1})$ convergence of the objective value for a large class of Hessian geometries and unregularized reward. 
We strengthen this general result and establish linear convergence for Kakade's and Morimura's NPG flows and $O(t^{-1/(\sigma-1)})$ convergence for $\sigma$-NPG flows for $\sigma\in(1,2)$. 
We provide empirical evidence that these rates are tight and that the rate $O(t^{-1/(\sigma-1)})$ also holds for $\sigma\ge2$. 
Under strongly convex penalization, we obtain linear convergence for a large class of Hessian geometries. 

\paragraph{Reduction to state-action space}
For a solution $\theta(t)$ of the natural policy gradient flow, the corresponding state-action frequencies $\eta(t)$ solve the gradient flow with respect to the Riemannian metric. 
This is made precise in the following result, which shows that it suffices to study Riemannian gradient flows in state-action space in order to study natural policy gradient flows for tabular softmax policies. 

\begin{proposition}[Evolution in state-action space]\label{prop:evolutionStateAction}
Consider an MDP $(\mathcal S, \mathcal A, \alpha)$, a Riemannian metric $g$ on $\operatorname{int}(\mathcal{N}) = \mathbb R_{>0}^{\mathcal S\times\mathcal A}$ and an differentiable objective function $\mathfrak R\colon\operatorname{int}(\Delta_{\mathcal S\times\mathcal A}) \to \mathbb R$. Consider a regular policy parametrization and the objective $R(\theta)\coloneqq \mathfrak R(\eta_\theta)$ and a solution $\theta\colon[0,T]\to \Theta = \mathbb R^{\mathcal S\times\mathcal A}$ of the natural policy gradient flow
\begin{equation}\label{eq:NPGflow}
    \partial_t \theta(t) = \nabla^N R(\theta(t)) = G(\theta(t))^+\nabla R(\theta(t)),
\end{equation}
where $G(\theta)_{ij} = g_p(\partial_{\theta_i} \eta_\theta, \partial_{\theta_j} \eta_\theta)$ and $G(\theta)^+$ denotes some pseudo inverse of $G(\theta)$. 
Then, setting $\eta(t)\coloneqq \eta_{\theta(t)}$ we have that $\eta\colon[0, T] \to \Delta_{\mathcal S\times\mathcal A}$ is the gradient flow with respect to the metric $g|_{\mathcal N}$ and the objective $\mathfrak R$, i.e., solves
\begin{equation}\label{eq:NPGflowSA}
    \partial_t \eta(t) = \nabla^{g|\mathcal N}\mathfrak R(\eta(t)) = \Pi_{T\mathcal L}(\nabla^g \mathfrak R(\eta(t))),
\end{equation}
where $\Pi_{T\mathcal L}^g$ is the $g$-orthogonal projection onto the tangent space $T\mathcal L$ with $\mathcal L$ defined in~\eqref{eq:linearSpace}. 
\end{proposition}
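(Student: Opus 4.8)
The plan is to recognize the parameter-space flow \eqref{eq:NPGflow} as the preimage of a Riemannian gradient flow under the state-action parametrization $P(\theta) = \eta_\theta$, and to convert it into the intrinsic gradient flow on $\mathcal N$ by means of Theorem~\ref{thm:NGinModelSpace} together with the regularity of the parametrization. First I would differentiate $\eta(t) = \eta_{\theta(t)} = P(\theta(t))$ along the flow: the chain rule gives $\partial_t\eta(t) = dP_{\theta(t)}\,\partial_t\theta(t) = dP_{\theta(t)}\bigl(\nabla^N R(\theta(t))\bigr)$, where the last equality uses \eqref{eq:NPGflow}. Since $R = \mathfrak R\circ P$ is exactly the factorization of Definition~\ref{def:ng} with loss $\ell = \mathfrak R$ and Gram matrix $G(\theta)_{ij} = g_{\eta_\theta}(\partial_{\theta_i}\eta_\theta,\partial_{\theta_j}\eta_\theta)$, Theorem~\ref{thm:NGinModelSpace} applies and yields
\[ dP_{\theta(t)}\bigl(\nabla^N R(\theta(t))\bigr) = \Pi_{T_{\theta(t)}\mathcal M_\Theta}\bigl(\nabla^g \mathfrak R(\eta(t))\bigr), \]
where $T_{\theta}\mathcal M_\Theta = \operatorname{range}(dP_\theta)$ is the generalized tangent space and $\Pi$ denotes the $g$-orthogonal projection.

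The heart of the argument is to identify the generalized tangent space $\operatorname{range}(dP_\theta)$ with the full tangent space $T\mathcal L$ of the state-action polytope. Here I would decompose $P$ as the composition $\theta\mapsto\pi_\theta\mapsto\eta^{\pi_\theta}$. By the regularity assumption \eqref{eq:regularparametrization}, the differential of $\theta\mapsto\pi_\theta$ has image equal to the full policy tangent space $T_{\pi_\theta}\Delta_\AA^\SS$. By Proposition~\ref{prop:birational}, under Assumption~\ref{ass:positivity} the map $\pi\mapsto\eta^\pi$ is a diffeomorphism between the interiors of $\Delta_\AA^\SS$ and $\mathcal N$, so its differential is a linear isomorphism $T_{\pi_\theta}\Delta_\AA^\SS \to T_{\eta_\theta}\mathcal N = T\mathcal L$. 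Composing the two, $\operatorname{range}(dP_\theta)$ is the image of the entire policy tangent space under this isomorphism, hence all of $T\mathcal L$. Consequently $\Pi_{T_{\theta(t)}\mathcal M_\Theta} = \Pi_{T\mathcal L}$.

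Finally I would note that $\Pi_{T\mathcal L}\bigl(\nabla^g\mathfrak R(\eta)\bigr)$ is precisely the intrinsic Riemannian gradient $\nabla^{g|\mathcal N}\mathfrak R(\eta)$ of the restriction of $\mathfrak R$ to the submanifold $(\mathcal N, g|_\mathcal N)$. This is the standard submanifold-gradient fact: the intrinsic gradient is the ambient-$g$-orthogonal projection of the ambient gradient onto the tangent space, which follows because the projection is $g$-self-adjoint and $T\mathcal N = T\mathcal L$. Chaining the three displays gives $\partial_t\eta(t) = \Pi_{T\mathcal L}\bigl(\nabla^g\mathfrak R(\eta(t))\bigr) = \nabla^{g|\mathcal N}\mathfrak R(\eta(t))$, which is exactly \eqref{eq:NPGflowSA}.

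I expect the main obstacle to be the identification $\operatorname{range}(dP_\theta) = T\mathcal L$: the regularity hypothesis is phrased in policy space while the flow lives in state-action space, and bridging the two requires the diffeomorphism of Proposition~\ref{prop:birational} together with the verification that its differential carries the full policy tangent space onto $T\mathcal L$ (here Assumption~\ref{ass:positivity} is essential, as it guarantees bijectivity of conditioning). A secondary technical point is to reconcile the various restrictions of $g$ — to $\Delta_{\SS\times\AA}$ and to $\mathcal N$ — but this is harmless, since $T\mathcal L \subseteq T\Delta_{\SS\times\AA}$ and $g$-orthogonal projections onto nested subspaces compose, so only the projection onto the smaller space $T\mathcal L$ survives.
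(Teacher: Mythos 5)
Your proposal is correct and follows exactly the paper's route: the paper's entire proof is the one-line statement ``This is a direct consequence of Theorem~\ref{thm:NGinModelSpace},'' and your argument is precisely the detailed unpacking of that — chain rule along the flow, application of Theorem~\ref{thm:NGinModelSpace} with $P(\theta)=\eta_\theta$, identification of $\operatorname{range}(dP_\theta)$ with $T\mathcal L$ via regularity and Proposition~\ref{prop:birational}, and the standard submanifold-gradient fact. In particular, your identification of the generalized tangent space is the step the paper leaves implicit, and spelling it out (including where Assumption~\ref{ass:positivity} enters) is a faithful completion rather than a deviation.
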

\begin{proof}
This is a direct consequence of Theorem~\ref{thm:NGinModelSpace}.
\end{proof}

The preceding result covers the commonly studied tabular softmax parametrization. 
For general parametrizations, the result does not hold. However, if for any two parameters $\theta, \theta'$ with $\eta_\theta = \eta_{\theta'}$ it holds that 
        \[ \operatorname{span}\{\partial_{\theta_{i}}\pi_\theta : i = 1, \dots, p\} = \operatorname{span}\{\partial_{\theta_{i}}\pi_{\theta'} : i = 1, \dots, p\}, 
        \]
then a similar result can be established. 

By Proposition~\ref{prop:evolutionStateAction} it suffices to study solutions $\eta\colon[0, T]\to\mathcal N$ of the gradient flow in state-action space. 
We have seen before that a large class of natural policy gradients arise from Hessian geometries in state-action space. In particular, this covers the natural policy gradients proposed by Kakade~\cite{kakade2001natural} and Morimura et al.~\cite{morimura2008new}. We study the evolution of these flows in state-action space and leverage results on Hessian gradient flows of convex problems in~\cite{alvarez2004hessian} to obtain global convergence rates for different NPG methods. 

\subsection{Convergence of unregularized Hessian natural policy gradient flows}

First, we study the case of unregularized reward, i.e., where the state-action objective is linear and given by $\mathfrak R(\eta) = \langle r, \eta\rangle$. In this case we obtain global convergence guarantees including rates. In particular, our general result covers the $\sigma$-NPGs and thus Morimura's NPGs as well as Kakade's NPGs. For the remainder of this section we work under the following assumptions.

\begin{setting}\label{set:MDPconvergence}
Let $(\mathcal S, \mathcal A, \alpha)$ be an MDP, $\mu\in\Delta_\mathcal S$ and $r\in\mathbb R^{\mathcal S\times\mathcal A}$ and let the positivity Assumption~\ref{ass:positivity} hold. 
We denote the state-action polytope by $\mathcal N = \mathbb R_{\ge0}^{\SS\times\AA} \cap\mathcal L$, see Proposition~\ref{prop: state_action_polytope_fully_observable}, and its (relative) interior and boundary by $\operatorname{int}(\mathcal N) = \mathbb R_{>0}^{\SS\times\AA} \cap\mathcal L$ and $\partial\mathcal N = \partial\mathbb R_{\ge0}^{\SS\times\AA}\cap\mathcal L$ respectively.
We consider an objective function $\mathfrak R\colon\mathbb R^{\SS\times\AA}\to\mathbb R\cup\{-\infty\}$ that is finite, differentiable and concave on $\mathbb R_{>0}^{\mathcal S\times\mathcal A}$ and continuous on its domain $\operatorname{dom}(\mathfrak R) =  \{\eta\in\mathbb R^{\mathcal S\times\mathcal A}:\mathfrak R(\eta)\in\mathbb R\}$. Further, we consider a real-valued function $\phi\colon\mathbb R^{\SS\times\AA}\to\mathbb R\cup\{+\infty\}$, which we assume to be finite and twice continuously differentiable on $\mathbb R_{>0}^{\mathcal S\times\mathcal A}$ and such that $\nabla^2\phi(\eta)$ is positive definite on $T_\eta\mathcal N = T\mathcal L\subseteq\mathbb R^{\mathcal S\times\mathcal A}$ for every $\eta\in \operatorname{int}(\mathcal N)$. 
Further, with $\eta\colon[0,T)\to\mathcal N$ we denote a solution of the Hessian gradient flow 
\begin{equation}\label{eq:hessianGFst}
    \partial_t \eta(t) = \Pi_{T\mathcal L}(\nabla^2\phi(\eta(t))^{-1} \nabla\mathfrak R(\eta(t))),
\end{equation}
which is the gradient flow with respect to the Hessian geometry induced by $\phi$ on $\mathcal N$. 
We denote\footnote{Note that $\mathfrak R$ is bounded over the bounded set $\mathcal N$ as a concave function.} $R^\ast \coloneqq \sup_{\eta\in\mathcal N} \mathfrak R(\eta)<\infty$
and by $\eta^\ast\in\mathcal N$, we denote a maximizer -- if one exists -- of $\mathfrak R$ over $\mathcal N$. 
We denote the policies corresponding to $\eta_0$ and $\eta^\ast$ by $\pi_0$ and $\pi^\ast$, see Proposition~\ref{prop:birational}.
\end{setting}
We observe that the Hessian of the conditional entropy only defines a Riemannian metric on $\operatorname{int}(\mathcal N)$, even if not over all of $\Delta_{\SS\times\AA}$. 
Note that in general $\eta^\ast$ might lie on the boundary and for linear $\mathfrak R$ corresponding to unregularized reward it necessarily lies on the boundary. 

\paragraph{Sublinear rates for general case} 

We begin by providing a sublinear rate of convergence for general NPG flows, which we the specialize to Kakade and $\sigma$-NPGs.

\begin{lemma}[Convergence of Hessian natural policy gradient flows]\label{prop:convergenceGeneral} 
Consider Setting~\ref{set:MDPconvergence} and assume that there exists a solution $\eta\colon[0,T)\to\operatorname{int}(\mathcal N)$ of the NPG flow~\eqref{eq:hessianGFst} with initial condition $\eta(0) = \eta_0$. Then for any $\eta'\in\mathcal N$ and $t\in[0, T)$ it holds that 
\begin{equation}
    \mathfrak R(\eta') - \mathfrak R(\eta(t)) \le D_\phi(\eta', \eta_0)t^{-1},
\end{equation}
where $D_\phi$ denotes the Bregman divergence of $\phi$. In particular it holds that $\mathfrak R(\eta(t))\to R^\ast$ as $T\to\infty$. Further, this convergence happens at a rate $O(t^{-1})$ if there is a maximizer $\eta^\ast\in\mathcal N$ of $\mathfrak R$ with $\phi(\eta^\ast)<\infty$. 
\end{lemma}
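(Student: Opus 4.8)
The plan is to use the Bregman divergence $D_\phi(\eta',\cdot)$ to a fixed competitor $\eta'\in\mathcal N$ as a Lyapunov function and to track it along the flow, in the spirit of the Hessian gradient flow estimates of~\cite{alvarez2004hessian}. First I would record the defining property of the flow: since $\dot\eta(t)$ is the Riemannian gradient of $\mathfrak R|_{\mathcal N}$ with respect to the Hessian metric $g=\nabla^2\phi$, and the $g$-orthogonal projection onto $T\mathcal L$ preserves inner products with vectors of $T\mathcal L$, it satisfies
\[
\langle \nabla^2\phi(\eta(t))\,\dot\eta(t), w\rangle = \langle \nabla\mathfrak R(\eta(t)), w\rangle \quad\text{for all } w\in T\mathcal L.
\]
Two consequences follow. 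Taking $w=\dot\eta(t)\in T\mathcal L$ and using positive definiteness of $\nabla^2\phi$ on $T\mathcal L$ gives $\frac{d}{dt}\mathfrak R(\eta(t)) = \langle\nabla\mathfrak R(\eta(t)),\dot\eta(t)\rangle = \langle\nabla^2\phi(\eta(t))\dot\eta(t),\dot\eta(t)\rangle\ge 0$, so the objective is non-decreasing along the flow. Taking $w=\eta'-\eta(t)$, which lies in $T\mathcal L$ because $\eta',\eta(t)\in\mathcal L$, will feed into the Lyapunov computation.

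The core step is to differentiate $D_\phi(\eta',\eta(t)) = \phi(\eta')-\phi(\eta(t))-\langle\nabla\phi(\eta(t)),\eta'-\eta(t)\rangle$. A direct computation, in which the two $\nabla\phi$ contributions cancel, yields $\frac{d}{dt}D_\phi(\eta',\eta(t)) = -\langle\nabla^2\phi(\eta(t))\dot\eta(t),\eta'-\eta(t)\rangle$, and the defining property above rewrites this as $-\langle\nabla\mathfrak R(\eta(t)),\eta'-\eta(t)\rangle$. Concavity of $\mathfrak R$ gives the first-order bound $\langle\nabla\mathfrak R(\eta(t)),\eta'-\eta(t)\rangle\ge \mathfrak R(\eta')-\mathfrak R(\eta(t))$, hence $\frac{d}{dt}D_\phi(\eta',\eta(t))\le \mathfrak R(\eta(t))-\mathfrak R(\eta')$. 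Integrating over $[0,t]$ and dropping the non-negative term $D_\phi(\eta',\eta(t))$ (non-negativity of the Bregman divergence, valid since $\phi$ is convex along $T\mathcal L$) yields $\int_0^t(\mathfrak R(\eta')-\mathfrak R(\eta(s)))\,ds\le D_\phi(\eta',\eta_0)$. Finally, since $s\mapsto\mathfrak R(\eta(s))$ is non-decreasing, the integrand is non-increasing, so $t\,(\mathfrak R(\eta')-\mathfrak R(\eta(t)))\le \int_0^t(\mathfrak R(\eta')-\mathfrak R(\eta(s)))\,ds$, which gives the claimed bound $\mathfrak R(\eta')-\mathfrak R(\eta(t))\le D_\phi(\eta',\eta_0)\,t^{-1}$.

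For the two concluding assertions I would argue as follows. To obtain $\mathfrak R(\eta(t))\to R^\ast$, rather than plugging in the (possibly boundary) maximizer directly, I would apply the bound to a sequence of interior competitors $\eta'_n$ with $\mathfrak R(\eta'_n)\to R^\ast$ — these have finite $D_\phi(\eta'_n,\eta_0)$ — so that $\liminf_{t\to\infty}\mathfrak R(\eta(t))\ge \mathfrak R(\eta'_n)$ for every $n$, which combined with $\mathfrak R(\eta(t))\le R^\ast$ gives convergence. The explicit $O(t^{-1})$ rate then follows by taking $\eta'=\eta^\ast$ once a maximizer with $\phi(\eta^\ast)<\infty$ exists, since then $D_\phi(\eta^\ast,\eta_0)$ is finite, all three of $\phi(\eta^\ast)$, $\phi(\eta_0)$ and $\nabla\phi(\eta_0)$ being finite as $\eta_0\in\operatorname{int}(\mathcal N)$.

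The main obstacle I anticipate is the bookkeeping at the boundary rather than the core Lyapunov computation. The flow is assumed to remain in $\operatorname{int}(\mathcal N)$, so $\nabla\mathfrak R$ and $\nabla^2\phi$ are well defined along it, but the natural competitor $\eta^\ast$ for unregularized linear reward lies on $\partial\mathcal N$, where $D_\phi(\eta^\ast,\eta_0)$ may be infinite. This is precisely why the clean rate requires the extra hypothesis $\phi(\eta^\ast)<\infty$, and why convergence in the general case must be argued through an interior approximating sequence.
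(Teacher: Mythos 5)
Your proof is correct and takes essentially the same route as the paper: the paper's own argument (a condensed version of Proposition 4.4 in \cite{alvarez2004hessian}) rests on exactly your Lyapunov computation for $D_\phi(\eta',\eta(t))$, followed by integration and the monotonicity of $t\mapsto\mathfrak R(\eta(t))$. As a side remark, your derivation has the correct sign where the paper's display~\eqref{eq:derivativeu} contains a typo (the derivative should read $-\langle\nabla\mathfrak R(\eta(t)),\eta'-\eta(t)\rangle$), and your interior-approximation argument for the limit $\mathfrak R(\eta(t))\to R^\ast$ carefully fills in a step the paper leaves implicit.
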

\begin{proof}
This is precisely the statement of Proposition 4.4 in~\cite{alvarez2004hessian}; note however, that they assume a globally defined objective $\mathfrak R\colon\mathbb R^{\SS\times\AA}\to\mathbb R$ and hence for completeness we provide a quick argument. Note that 
\begin{equation}\label{eq:derivativeu}
    \partial_t D_\phi(\eta, \eta(t)) = \langle\nabla \mathfrak R(\eta(t)), \eta - \eta(t) \rangle \le \mathfrak R(\eta(t)) - \mathfrak R(\eta), 
\end{equation}
which can either be seen by inspecting the proof of equation (4.4) in~\cite{alvarez2004hessian} and noting that the proof does not require the stronger assumption made there or by explicit computation. 
Integration and the monotonicity of $t\mapsto \mathfrak R(\eta(t))$ yields the claim. 
\end{proof} 

The previous result is very general and reduces the problem of showing convergence of the natural gradient flow to the problem of well posedness. However, well posedness is not always given, such as for example in the case of an unregularized reward and the Euclidean geometry in state-action space. In this case, the gradient flow in state-action space will reach the boundary of the state-action polytope $\mathcal N$ in finite time at which point the gradient is not classically defined anymore and the softmax parameters blow up; see Figure~\ref{fig:illustrations}. An important class of Hessian geometries that prevent a finite hitting time of the boundary are induced by the class of Legendre-type functions, which curve up towards the boundary. 

\begin{definition}[Legendre type functions]\label{def:LegendreType}
We call $\phi\colon\mathbb R^{\mathcal S\times\mathcal A}\to\mathbb R\cup\{+\infty\}$ a \emph{Legendre type function} if it satisfies the following properties:
\begin{enumerate}
    \item \emph{Domain: } It holds that  $\mathbb R_{>0}^{\mathcal S\times\mathcal A}\subseteq \operatorname{dom}(\phi) \subseteq \mathbb R_{\ge0}^{\mathcal S\times\mathcal A} $, where $\operatorname{dom}(\phi) = \{\eta\in\mathbb R^{\SS\times\AA}:\phi(\eta)<\infty\}$. 
    \item \emph{Smoothness and convexity:} We assume $\phi$ to be continuous on $\operatorname{dom}(\phi)$ and twice continuous differentiable on $\mathbb R_{>0}^{\mathcal S\times\mathcal A}$ and such that $\nabla^2\phi(\eta)$ is positiv definite on $T_\eta\mathcal N = T\mathcal L \subseteq\mathbb R^{\mathcal S\times\mathcal A}$ for every $\eta\in \operatorname{int}(\mathcal N)$.
    \item \emph{Gradient blowup at boundary:}
    For any $(\eta_k)\subseteq\operatorname{int}(\mathcal N)$ with $\eta_k\to \eta\in\partial\mathcal N$  we have $\lVert\nabla \phi(\eta_k)\rVert \to \infty$. 
\end{enumerate}
\end{definition}

We note that the above definition differs from~\cite{alvarez2004hessian}, who consider Legendre functions on arbitrary open sets but work with more restrictive assumptions. More precisely, they require the gradient blowup on the boundary of the entire cone $\mathbb R_{\ge0}^{\SS\times\AA}$ and not only on the boundary of the feasible set $\mathcal N$ of the optimization problem. However, this relaxation is required to cover the case of the conditional entropy, which corresponds to Kakade's NPG, as we see now. 

\begin{example}
The class of Legendre type functions covers the functions inducing Kakade's and Morimura's NPG via their Hessian geometries. More precisely, the following Legendre type functions will be of great interest in the remainder: 
\begin{enumerate}
    \item The functions $\phi_\sigma$ defined in~\eqref{eq:sigmaFunctions} used to define the $\sigma$-NPG are Legendre type functions for $\sigma\in[1, \infty)$. Note that this includes the Fisher geometry, corresponding to Morimura's NPG for $\sigma=1$ but excludes the Euclidean geometry, which corresponds to $\sigma=0$.
    \item The conditional entropy $\phi_C$ defined in~\eqref{eq:conditionalEntropySA} is a Legendre type function. The Hessian geometry of this function induces Kakade's NPG. 
    Note that in this case the gradient blowup holds on the boundary $\mathcal N$ but not on the boundary of $\Delta_{\SS\times\AA}$ or even $\mathbb R_{\ge0}^{\SS\times\AA}$.
\end{enumerate}
\end{example}

The definition of a Legendre function with the gradient blowing up at the boundary of the feasible set prevents the gradient flow from reaching the boundary in finite time and thus ensures the global existence of the gradient flow. 

Let us now turn towards Kakade's natural policy gradient, which is the Hessian NPG induced by the conditional entropy $\phi_C$ defined in~\eqref{eq:conditionalEntropy}. 
The Bregman divergence of the conditional entropy (see~\citep{polyanskiy2014lecture}) is given by 
\begin{align*}
    D_{\phi_C}(\eta_1, \eta_2) & = \sum_{s,a} \eta_1(s, a)\log\left(\frac{\eta_1(s,a)}{\eta_2(s,a)} \right) - \sum_{s,a} \eta_1(s, a)\log\left(\frac{\sum_{a'}\eta_1(s,a')}{\sum_{a'}\eta_2(s,a')} \right) \\
& = D_{KL}(\eta_1, \eta_2) - D_{KL}(\rho_1, \rho_2) = \sum_{s}\rho_1(s)D_{KL}(\eta_1(\cdot|s), \eta_2(\cdot|s)),
\end{align*}
which has been studied in the context of mirror descent algorithms of the linear programming formulation of MDPs in~\cite{neu2017unified}.

\begin{theorem}[Convergence of Kakade's NPG flow for unregularized reward]\label{thm:convergenceKNPGFlow}
Consider Setting~\ref{set:MDPconvergence} with $\phi=\phi_C$ being the conditional entropy defined in~\eqref{eq:conditionalEntropySA} and let $\mathfrak R(\eta) = \langle r, \eta\rangle$ denote the unregularized reward and fix an element $\eta_0\in\operatorname{int}(\mathcal N)$. Then there exists a unique global solution $\eta\colon[0, \infty)\to \operatorname{int}(\mathcal N)$ of Kakade's NPG flow with initial condition $\eta(0) = \eta_0$, i.e., of~\eqref{eq:hessianGFst} with $\phi = \phi_C$, and it holds that 
\[ 
R^\ast-\mathfrak R(\eta(t)) \le t^{-1} D_{\phi_C}(\eta^\ast, \eta_{0})  = t^{-1}\sum_{s} \rho^\ast(s) D_{KL}(\pi^\ast(\cdot|s), \pi_{0}(\cdot|s)),
\] 
where $D_{\phi_C}$ denotes the conditional relative entropy. In particular, we have $\operatorname{dist}(\eta(t), S)\in O(t^{-1})$, where $S = \{\eta\in\mathcal N:\langle r, \eta\rangle = R^\ast\}$ denotes the solution set and $\operatorname{dist}$ denotes the Euclidean distance.
\end{theorem}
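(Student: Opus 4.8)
The plan is to secure the global existence and uniqueness of the flow inside $\operatorname{int}(\mathcal N)$ first, after which the quantitative rate will follow almost immediately from Lemma~\ref{prop:convergenceGeneral}, leaving only two explicit identities to record. Since $\phi_C$ is twice continuously differentiable with $\nabla^2\phi_C$ positive definite on $T\mathcal L$ throughout $\operatorname{int}(\mathcal N)$, the right-hand side of~\eqref{eq:hessianGFst} is locally Lipschitz on $\operatorname{int}(\mathcal N)$, so the Picard--Lindel\"of theorem yields a unique maximal solution on some interval $[0,T)$ with $\eta(0)=\eta_0$. Because $\mathcal N$ is compact, a finite $T$ could only be caused by the trajectory escaping every compact subset of $\operatorname{int}(\mathcal N)$, i.e.\ by $\eta(t)$ approaching $\partial\mathcal N$, and the core of the argument is to rule this out.

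To do so I would use that $\phi_C$ is of Legendre type with gradient blow-up on $\partial\mathcal N$. Fix an arbitrary reference point $\bar\eta\in\operatorname{int}(\mathcal N)$ with associated policy $\bar\pi$. Differentiating the Bregman divergence along the flow exactly as in~\eqref{eq:derivativeu} gives $\partial_t D_{\phi_C}(\bar\eta,\eta(t)) = \langle\nabla\mathfrak R(\eta(t)),\bar\eta-\eta(t)\rangle$, which is bounded above by the constant $R^\ast-\mathfrak R(\bar\eta)$ since $\mathfrak R=\langle r,\cdot\rangle$ is bounded over $\mathcal N$. Integrating, $D_{\phi_C}(\bar\eta,\eta(t))\le D_{\phi_C}(\bar\eta,\eta_0)+(R^\ast-\mathfrak R(\bar\eta))\,t$ stays finite on every bounded interval. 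On the other hand, the explicit formula $D_{\phi_C}(\bar\eta,\eta(t))=\sum_s\rho_{\bar\eta}(s)D_{KL}(\bar\pi(\cdot|s),\pi_t(\cdot|s))$ recalled before the theorem shows that if $\eta(t)$ approached $\partial\mathcal N$ along a sequence $t_k\to T$, then under Assumption~\ref{ass:positivity} some conditional $\pi_{t_k}(a|s)\to0$ with $\bar\pi(a|s)>0$, forcing $D_{KL}(\bar\pi(\cdot|s),\pi_{t_k}(\cdot|s))\to\infty$ and hence $D_{\phi_C}(\bar\eta,\eta(t_k))\to\infty$. This contradicts the linear-in-$t$ bound whenever $T<\infty$, so $T=\infty$ and the trajectory remains in $\operatorname{int}(\mathcal N)$ for all times.

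With global existence in hand, a maximizer $\eta^\ast$ exists by continuity of the linear $\mathfrak R$ on the compact polytope $\mathcal N$, and $D_{\phi_C}(\eta^\ast,\eta_0)<\infty$ because $\eta_0$ is interior and $\phi_C$ is finite on all of $\mathbb R_{\ge0}^{\SS\times\AA}$ (with $0\log 0=0$). Applying Lemma~\ref{prop:convergenceGeneral} with $\eta'=\eta^\ast$ yields $R^\ast-\mathfrak R(\eta(t))\le t^{-1}D_{\phi_C}(\eta^\ast,\eta_0)$, and substituting the conditional relative entropy identity gives the stated $t^{-1}\sum_s\rho^\ast(s)D_{KL}(\pi^\ast(\cdot|s),\pi_0(\cdot|s))$. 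To pass to the Euclidean distance to $S=\mathcal N\cap\{\langle r,\eta\rangle=R^\ast\}$, I would invoke a Hoffman-type polyhedral error bound: since $S$ is the optimal face of $\mathcal N$ for the functional $\langle r,\cdot\rangle$, there is a constant $c>0$ with $\operatorname{dist}(\eta,S)\le c(R^\ast-\langle r,\eta\rangle)$ for all $\eta\in\mathcal N$, whence $\operatorname{dist}(\eta(t),S)\le c\,t^{-1}D_{\phi_C}(\eta^\ast,\eta_0)=O(t^{-1})$.

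The main obstacle is the global existence step. The standard result of~\cite{alvarez2004hessian} requires gradient blow-up on the boundary of the entire cone $\mathbb R_{\ge0}^{\SS\times\AA}$, which $\phi_C$ does not satisfy, so the confinement argument must be run with the relaxed notion of Definition~\ref{def:LegendreType} together with the MDP-specific fact (Assumption~\ref{ass:positivity}) that every state marginal $\rho^\pi$ stays positive, so that approaching $\partial\mathcal N$ necessarily degenerates some conditional probability rather than a state marginal. Everything else is either a direct appeal to Lemma~\ref{prop:convergenceGeneral} or a routine convex-analytic identity.
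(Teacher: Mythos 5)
Your proposal is correct and takes essentially the same route as the paper: the paper also deduces the rate by applying Lemma~\ref{prop:convergenceGeneral} with $\eta'=\eta^\ast$ once well-posedness is known, and it obtains well-posedness by adapting the reasoning of \cite[Theorem~4.1]{alvarez2004hessian} to the relaxed Legendre-type notion of Definition~\ref{def:LegendreType}. Your explicit confinement argument---the Bregman divergence to an interior reference point grows at most linearly in $t$ along the flow, while the conditional relative entropy must blow up along any sequence approaching $\partial\mathcal N$ because Assumption~\ref{ass:positivity} keeps the state marginals positive---is precisely the instantiation of that cited reasoning which the paper leaves implicit, so it fills in detail rather than changing the method.
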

\begin{proof}
The well posedness follows by a similar reasoning as in~\cite[Theorem~4.1]{alvarez2004hessian}. Now the result follows directly from Lemma~\ref{prop:convergenceGeneral}. 
\end{proof}

Now we elaborate the consequences of the general convergence result Lemma~\ref{prop:convergenceGeneral} for the case of $\sigma$-NPG flows. 
Here, the study is more delicate since for $\sigma>2$ we typically have $\phi_\sigma(\eta^\ast) = \infty$ since the maximizer $\eta^\ast$ lies at the boundary unless the reward is constant. 

\begin{theorem}[Convergence of $\sigma$-NPG flow for unregularized reward]\label{cor:convSigmaNPGFlow}
Consider Setting~\ref{set:MDPconvergence} with $\phi=\phi_\sigma$ for some $\sigma\in[1, \infty)$ being defined in~\eqref{eq:sigmaFunctions}. Denote the unregularized reward by $\mathfrak R(\eta) = \langle r, \eta\rangle$ and fix an element $\eta_0\in\operatorname{int}(\mathcal N)$.
Then there exists a unique global solution $\eta\colon[0, \infty)\to \operatorname{int}(\mathcal N)$ of the Hessian NPG flow~\eqref{eq:hessianGFst} with inital condition $\eta(0) = \eta_0$ and it holds that $R^\ast - \mathfrak R(\eta(t)) = O(f_\sigma(t))$ as $t\to\infty$, where 
    \[ 
    f_\sigma(t) \coloneqq \begin{cases}
    t^{-1} \quad & \text{for } \sigma \in[1, 2) \\
    \log(t)t^{-1} & \text{for } \sigma = 2 \\
    t^{\sigma-3} & \text{for } \sigma\in (2, \infty).
    \end{cases}
    \]
In particular, we have $\operatorname{dist}(\eta(t), S)\in O(f_\sigma(t))$, where $S = \{\eta\in\mathcal N:\langle r, \eta\rangle = R^\ast\}$ denotes the solution set and $\operatorname{dist}$ denotes the Euclidean distance.
This result covers Morimura's NPG flow as the special case with $\sigma=1$. 
\end{theorem}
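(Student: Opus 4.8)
The plan is to combine global existence of the flow, which follows from the Legendre type property of $\phi_\sigma$, with the general sublinear estimate of Lemma~\ref{prop:convergenceGeneral} applied to a comparison point that is allowed to depend on $t$. Throughout, $\eta^\ast$ denotes a maximizer of the linear objective $\mathfrak R(\eta)=\langle r,\eta\rangle$ over the compact polytope $\mathcal N$ (which exists since $\mathfrak R$ is linear and $\mathcal N$ is compact), $R^\ast=\mathfrak R(\eta^\ast)$, and $S=\{\eta\in\mathcal N:\langle r,\eta\rangle=R^\ast\}$ is the optimal face. Since $\mathfrak R$ is linear we have $\nabla\mathfrak R\equiv r$, so the flow~\eqref{eq:hessianGFst} reads $\partial_t\eta=\Pi_{T\mathcal L}(\nabla^2\phi_\sigma(\eta)^{-1}r)$.

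First I would establish global existence and uniqueness. On $\operatorname{int}(\mathcal N)$ the driving vector field is smooth, because $\nabla^2\phi_\sigma(\eta)=\operatorname{diag}(\eta^{-\sigma})$ is smooth and positive definite there and the $g^\sigma$-orthogonal projection onto the fixed subspace $T\mathcal L$ depends smoothly on $\eta$; hence Picard--Lindel\"of yields a unique maximal solution, and boundedness of $\mathcal N$ rules out escape to infinity. Because $\phi_\sigma$ is a Legendre type function for $\sigma\in[1,\infty)$, the gradient blows up at $\partial\mathcal N$, which prevents the trajectory from reaching the boundary in finite time and thus makes the maximal solution global, exactly as in the Kakade case (Theorem~\ref{thm:convergenceKNPGFlow}) and \cite[Theorem~4.1]{alvarez2004hessian}. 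I regard this step as essentially standard given the Legendre property.

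The heart of the argument is the rate. For $\sigma\in[1,2)$ the function $\phi_\sigma$ is finite on all of $\mathbb R_{\ge 0}^{\SS\times\AA}$, so $D_{\phi_\sigma}(\eta^\ast,\eta_0)<\infty$ and Lemma~\ref{prop:convergenceGeneral} with $\eta'=\eta^\ast$ gives $R^\ast-\mathfrak R(\eta(t))\le D_{\phi_\sigma}(\eta^\ast,\eta_0)\,t^{-1}$ directly. For $\sigma\ge 2$ the maximizer lies on $\partial\mathcal N$ with $\phi_\sigma(\eta^\ast)=+\infty$, so this choice fails, and instead I would apply Lemma~\ref{prop:convergenceGeneral} at time $t$ to the interior comparison point $\eta_t\coloneqq(1-t^{-1})\eta^\ast+t^{-1}\eta_0$; this is legitimate because the lemma holds for every $\eta'\in\mathcal N$ and every $t$. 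Splitting
\begin{equation*}
    R^\ast-\mathfrak R(\eta(t)) = \big(R^\ast-\mathfrak R(\eta_t)\big)+\big(\mathfrak R(\eta_t)-\mathfrak R(\eta(t))\big) \le \big(R^\ast-\mathfrak R(\eta_t)\big)+D_{\phi_\sigma}(\eta_t,\eta_0)\,t^{-1},
\end{equation*}
linearity of $\mathfrak R$ gives $R^\ast-\mathfrak R(\eta_t)=t^{-1}(R^\ast-\mathfrak R(\eta_0))=O(t^{-1})$, so it remains to estimate the Bregman term. As $\phi_\sigma$ separates over coordinates and $\eta_t$ agrees with $\eta^\ast$ up to $O(1)$ on the coordinates where $\eta^\ast>0$, the blow-up is driven only by the coordinates with $\eta^\ast(s,a)=0$, where $\eta_t(s,a)=t^{-1}\eta_0(s,a)$. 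A direct computation then gives $D_{\phi_2}(\eta_t,\eta_0)=O(\log t)$ for $\sigma=2$ (logarithmic/Itakura-Saito blow-up) and $D_{\phi_\sigma}(\eta_t,\eta_0)=O(t^{\sigma-2})$ for $\sigma>2$ (from the $x^{2-\sigma}$ singularity). Multiplying by $t^{-1}$ reproduces $f_\sigma(t)=\log(t)\,t^{-1}$ and $f_\sigma(t)=t^{\sigma-3}$ respectively, the latter dominating the $O(t^{-1})$ contribution since $\sigma-3>-1$.

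Finally, the distance estimate would follow from a polyhedral error bound: since $\mathfrak R$ is linear and $\mathcal N$ is a polytope, $S$ is the solution set of a finite linear system, and Hoffman's lemma provides a constant $c>0$ with $\operatorname{dist}(\eta,S)\le c\,(R^\ast-\langle r,\eta\rangle)$ for all $\eta\in\mathcal N$; substituting $\eta=\eta(t)$ transfers the objective rate to the distance rate. The main obstacle I anticipate is the Bregman estimate for $\sigma\ge 2$, where one must verify that the contributions of the coordinates with $\eta^\ast(s,a)>0$ together with the linear term of the Bregman divergence remain uniformly bounded, so that the claimed singular order is genuine; the key conceptual step is the use of a $t$-dependent comparison point. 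Note that the choice of interpolation parameter $t^{-1}$ is not optimal---balancing the two terms would yield the faster rate $t^{-1/(\sigma-1)}$ indicated by the experiments---but it suffices for the stated bound and keeps the argument short.
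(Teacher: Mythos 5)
Your proof is correct and follows essentially the same route as the paper's: global existence from the Legendre-type property of $\phi_\sigma$ (via \cite[Theorem~4.1]{alvarez2004hessian}), direct application of Lemma~\ref{prop:convergenceGeneral} for $\sigma\in[1,2)$, a $t$-dependent interior comparison point for $\sigma\ge2$ (the paper takes $\eta_\delta=\eta^\ast+\delta v$ and sets $\delta=t^{-1}$ at the end, which is exactly your convex combination with $v=\eta_0-\eta^\ast$), and a polyhedral error bound for the distance claim. Your closing remark is worth emphasizing: choosing the balanced parameter $\delta=t^{-1/(\sigma-1)}$ instead of $t^{-1}$ would prove the rate $O(t^{-1/(\sigma-1)})$ for $\sigma>2$, strictly improving the theorem's $O(t^{\sigma-3})$ bound (which is vacuous for $\sigma\ge3$) and establishing the rate that the paper only conjectures from experiments in this regime.
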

\begin{proof}
By the preceding Lemma~\ref{prop:convergenceGeneral}
it suffices to show the well posedness of the $\sigma$-NPG flow. The result \cite[Theorem~4.1]{alvarez2004hessian} guarantees the well posedness for Hessian gradient flows for smooth Legendre type functions. Note however that they work with slightly stronger assumptions, which are that the gradient blowup of the Legendre type functions occurs not only on the boundary of $\mathcal N$ but on the boundary of $\mathbb R^{\SS\times\AA}_{\ge0}$ and that the objective $\mathfrak R$ is globally defined. 
Consolidating the proof in~\cite{alvarez2004hessian} reveals that both of these relaxations do not change the validity or proof of the statement. 

It is easy to see that for $\sigma\ge1$ the functions $\phi_\sigma$ are of Legendre type and smooth and hence we can apply the preceding Lemma~\ref{prop:convergenceGeneral}. Let $\eta^\ast$ be a maximizer, which necessarily lies at the boundary of $\mathcal N$ (except for constant reward) and therefore has at least one zero entry.
For $\sigma\in[1, 2)$ we have that $\phi_\sigma(\eta^\ast)<\infty$ and hence we obtain $R^\ast-\mathfrak R(\eta(t))\in D_{\phi_\sigma}(\eta^\ast, \eta_{0})t^{-1}$. 
Consider now the case $\sigma=2$ and pick $v\in\mathbb R^{\mathcal S\times\mathcal A}$ such that $\eta_\delta\coloneqq\eta^\ast+\delta v\in\operatorname{int}(\mathcal N)$ for small $\delta>0$. Then it holds that 
\begin{align*}
    R^\ast - \mathfrak R(\eta(t)) & = R^\ast - \langle r, \eta_\delta\rangle + \langle r, \eta_\delta\rangle - \mathfrak R(\eta(t)) = O(\delta) + D_{\phi_\sigma}(\eta_\delta, \eta_{0})t^{-1} \\
    & = O(\delta) + \left(\phi_\sigma(\eta_\delta) -  \phi_\sigma(\eta_{0}) - \langle \nabla \phi_\sigma(\eta_{0}), \eta_\delta-\eta_{0}\rangle\right)t^{-1} \\
    & = O(\delta) + O(\log(\delta) + 1) t^{-1}. 
\end{align*}
Setting $\delta = t^{-1}$ we obtain $R^\ast - \mathfrak R(\eta(t)) = O(t^{-1}) + O((\log(t^{-1}) + 1) t^{-1}) = O(\log(t)t^{-1})$ for $t\to\infty$. For $\sigma\in(2, \infty)$ the calculation follows in analogue fashion.
Noting that $\operatorname{dist}(\eta(t), S) \sim R^\ast-\mathfrak R(\eta(t))$ finishes the proof. 
\end{proof}

\begin{remark}
Theorem~\ref{cor:convSigmaNPGFlow} and Theorem~\ref{thm:convergenceKNPGFlow} show global convergence of $\sigma$-NPG and Kakade's NPG flows to a maximizer of the unregularized problem. 
Note that the reason why this is possible is that 
one does not work with a regularized objective but rather with a geometry arising from a regularization but with the original linear objective. 
For $\sigma<1$ the flow may reach a face of the feasible set in finite time; see Figure~\ref{fig:illustrations}. 
For $\sigma \ge3$ Theorem~\ref{cor:convSigmaNPGFlow} is uninformative since $\mathfrak R(\eta(t))$ is non increasing. 
However, in our experiments we observed that (discretizations of) $\sigma$-NPG flows still converge for $\sigma\ge3$, although the plateau problem becomes more pronounced, as can be seen in Figure~\ref{fig:illustrations}. 

Furthermore, one can show that the trajectory converges towards the maximizer that is closest to the initial point $\eta_0$ with respect to the Bregman divergence~\cite{alvarez2004hessian}. 
\end{remark}

\paragraph{Faster rates for $\sigma\in[1,2)$ and Kakade's NPG} 

Now we obtain improved and even linear convergence rates for Kakade's and Morimura's NPG flow for unregularized problems. 
To this end, we first formulate the following general result. 

\begin{lemma}[Convergence rates for gradient flow trajectories]\label{prop:ratesTrajectories} 
Consider Setting~\ref{set:MDPconvergence} and assume that there is a global solution $\eta\colon[0, \infty)\to\operatorname{int}(\mathcal N)$ of the Hessian gradient flow~\eqref{eq:hessianGFst}. 
Assume that there is $\eta^\ast\in\mathcal N$ such that $\phi(\eta^\ast)<+\infty$ as well as a neighborhood $N$ of $\eta^\ast$ in $\mathcal N$ and $\omega\in(0, \infty)$ and $\tau\in[1, \infty)$ such that
\begin{equation}\label{eq:lowerBound}
    \mathfrak R(\eta^\ast) - \mathfrak R(\eta) \ge \omega D_\phi(\eta^\ast, \eta)^\tau \quad \text{for all } \eta\in N.
\end{equation}
Then there is a constant $c>0$ such that
\begin{enumerate}
    \item if $\tau=1$, then $D_\phi(\eta^\ast, \eta(t))\le c e^{-\omega t}$,
    \item if $\tau>1$, then $D_\phi(\eta^\ast, \eta(t))\le c t^{-1/(\tau - 1)}$. 
\end{enumerate} 
\end{lemma}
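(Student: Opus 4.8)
The plan is to use the Bregman divergence $u(t)\coloneqq D_\phi(\eta^\ast,\eta(t))$ as a Lyapunov function and to reduce both claimed rates to a single scalar differential inequality of the form $\dot u \le -\omega u^\tau$, which is then integrated by a standard comparison argument. The whole proof thus rests on transporting the sharpness hypothesis~\eqref{eq:lowerBound} into a decay estimate for $u$.

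First I would record the behaviour of $u$ along the flow. Applying the estimate~\eqref{eq:derivativeu} from the proof of Lemma~\ref{prop:convergenceGeneral} with the fixed comparison point $\eta=\eta^\ast$ (admissible since $\phi(\eta^\ast)<\infty$) gives
\[
\dot u(t) = \partial_t D_\phi(\eta^\ast,\eta(t)) \le \mathfrak R(\eta(t)) - \mathfrak R(\eta^\ast) = -\big(R^\ast - \mathfrak R(\eta(t))\big) \le 0,
\]
so $u$ is non-increasing; in particular its sublevel sets are forward invariant and $u(t)$ converges to a limit as $t\to\infty$. Next, whenever $\eta(t)$ lies in the neighborhood $N$, the sharpness hypothesis~\eqref{eq:lowerBound} applies and yields
\[
\dot u(t) \le -\big(R^\ast - \mathfrak R(\eta(t))\big) \le -\omega\,D_\phi(\eta^\ast,\eta(t))^\tau = -\omega\,u(t)^\tau.
\]

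Granting that $\eta(t)\in N$ for all $t$ beyond some time $t_0$ (see below), it remains to integrate $\dot u \le -\omega u^\tau$ on $[t_0,\infty)$. For $\tau=1$ this is $\dot u\le-\omega u$, and Gr\"onwall's inequality gives $u(t)\le u(t_0)e^{-\omega(t-t_0)}$, i.e.\ the exponential bound in case~(1). For $\tau>1$, dividing by $u^\tau$ (on the set where $u>0$; if $u$ ever vanishes the claim is trivial from then on) and observing $\frac{d}{dt}u^{1-\tau} = (1-\tau)u^{-\tau}\dot u \ge (\tau-1)\omega$, integration yields $u(t)^{1-\tau}\ge u(t_0)^{1-\tau}+(\tau-1)\omega(t-t_0)$, hence $u(t)\le\big((\tau-1)\omega(t-t_0)\big)^{-1/(\tau-1)}=O(t^{-1/(\tau-1)})$, which is case~(2). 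In both cases the bound on $[0,t_0]$ is absorbed into the constant $c$, since $u$ is bounded there.

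The step I expect to be the main obstacle is justifying that the trajectory eventually enters and remains in $N$, so that~\eqref{eq:lowerBound} may be invoked for all large $t$. Here the monotonicity of $u$ is essential: once $\eta(t)\in N$, forward invariance of the sublevel sets of $u$ keeps the trajectory within the region controlled by~\eqref{eq:lowerBound}, after possibly shrinking $N$ to a sublevel set $\{\,D_\phi(\eta^\ast,\cdot)\le\epsilon\,\}$, which for small $\epsilon$ is contained in $N$ because $D_\phi(\eta^\ast,\cdot)$ is bounded below by a positive constant on the compact set $\mathcal N\setminus N$. Entry into $N$ in turn follows from the global convergence $\mathfrak R(\eta(t))\to R^\ast$ established in Lemma~\ref{prop:convergenceGeneral}: this forces $\eta(t)$ toward the solution set, and for the relevant maximizer $\eta^\ast$ (the one toward which the Hessian flow converges, cf.\ the remark following Theorem~\ref{cor:convSigmaNPGFlow}) it drives $\eta(t)$ into $N$. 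Care is needed precisely when several maximizers exist, which is why the hypothesis is phrased locally around a specific $\eta^\ast$.
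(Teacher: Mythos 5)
Your overall strategy coincides with the paper's: take $u(t)=D_\phi(\eta^\ast,\eta(t))$ as Lyapunov function, combine \eqref{eq:derivativeu} with \eqref{eq:lowerBound} to obtain $\dot u\le-\omega u^\tau$ once the trajectory is trapped near $\eta^\ast$, and integrate (Gr\"onwall for $\tau=1$, the power rule for $\tau>1$). That part is correct and is exactly the paper's computation, up to the time shift absorbing the initial segment into the constant.

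The gap is in the step you yourself flag as the main obstacle: why the trajectory eventually enters, and may be assumed to remain in, $N$. You resolve it by invoking the remark after Theorem~\ref{cor:convSigmaNPGFlow} that the Hessian flow converges to the Bregman-closest maximizer, and by declaring $\eta^\ast$ to be ``the relevant maximizer, the one toward which the flow converges''. That smuggles in an additional hypothesis not present in the lemma: $\eta^\ast$ is only assumed to satisfy \eqref{eq:lowerBound}, and the cited remark is itself not proved in the paper (it is a pointer to~\cite{alvarez2004hessian}). Your closing concern about ``several maximizers'' is in fact moot, and seeing why closes the gap: since $\mathfrak R$ is concave on $\mathbb R_{>0}^{\SS\times\AA}$ by Setting~\ref{set:MDPconvergence}, and \eqref{eq:lowerBound} makes $\eta^\ast$ a strict local maximizer (the Bregman divergence is positive for $\eta\ne\eta^\ast$ by positive definiteness of $\nabla^2\phi$ on $T\mathcal L$), $\eta^\ast$ is automatically the \emph{unique global} maximizer of $\mathfrak R$ over the convex set $\mathcal N$; this is the paper's first observation. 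With uniqueness in hand, Lemma~\ref{prop:convergenceGeneral} gives $\mathfrak R(\eta(t))\to R^\ast$, and compactness of $\mathcal N$ together with continuity of $\mathfrak R$ forces every accumulation point of the trajectory to be a maximizer, hence equal to $\eta^\ast$; so $\eta(t)\to\eta^\ast$ and $\eta(t)\in N$ for all $t\ge t_0$, after which your differential inequality and integration go through (and your sublevel-set invariance argument becomes unnecessary). Without this uniqueness observation your proof does not establish the lemma as stated: a priori the flow could accumulate at a different maximizer, in which case $u(t)$ would not decay at all.
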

The lower bound \eqref{eq:lowerBound} 
can be interpreted as a form of strong convexity under which the objective value controls the Bregman divergence and hence convergence in objective value implies convergence of the state-action trajectories in the sense of the Bregman divergence. 
\begin{proof}
The statement of this result can be found in  \cite[Proposition 4.9]{alvarez2004hessian}, where however stronger assumptions are made and hence we provide a short proof. 
First, note that our assumption implies that $\eta^\ast$ is the unique global maximizer of $\mathfrak R$ over $\mathcal N$. 
By~\eqref{eq:derivativeu} it holds that $u(t) \coloneqq \partial_t D_\phi(\eta^\ast, \eta(t))$ is strictly decreasing as long as $\eta(t)\ne\eta^\ast$. Note that if $\eta(t) = \eta^\ast$ for some $t\in[0, \infty)$, we have $u(t') = 0$ for all $t'\ge t$ and hence the statement becomes trivial. Therefore, we can assume $u(t)>0$ for all $t>0$. 
By Lemma~\ref{prop:convergenceGeneral} it holds that $\mathfrak R(\eta(t))\to\mathfrak R(\eta^\ast)$ and hence $\eta(t)\to\eta^\ast$; this is due to the compactness of $\mathcal N$ and because the continuity of $\mathfrak R$ implies that every accumulation point of $\eta(t)$ is a maximizer and thus equal to $\eta^\ast$. 
Hence, $\eta(t)\in N$ for $t\ge t_0$. 
For the statement about the asymptotic behavior we may therefore assume without loss of generality that $\eta(t) \in N$ for all $t\ge0$. 
Combining~\eqref{eq:derivativeu} and~\eqref{eq:lowerBound} we obtain $u'(t) \le -\omega u(t)^\tau$. Dividing by the right hand side and integrating the inequality we obtain $u(t)\le u(0)e^{-\omega t}$ for $\tau=1$ and $u(t)\le \omega^{1/(1-\tau)}(\tau-1)^{1/(1-\tau)}t^{1/(1-\tau)}$. 
\end{proof} 

\begin{theorem}[Linear convergence of unregularized Kakade's NPG flow]\label{thm:linearconvergenceKakade}
Consider Setting~\ref{set:MDPconvergence}, where $\phi = \phi_C$ is the conditional entropy defined in~\eqref{eq:conditionalEntropySA} 
and assume that there is a unique maximizer $\eta^\ast$ of the unregularized reward $\mathfrak R$.
Then $R^\ast - \mathfrak R(\eta(t)) = O(e^{-ct})$ for some $c>0$. 
\end{theorem}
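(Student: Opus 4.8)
The plan is to verify the hypotheses of Lemma~\ref{prop:ratesTrajectories} with exponent $\tau = 1$, which immediately yields exponential decay of $D_{\phi_C}(\eta^\ast, \eta(t))$, and then to transfer this rate to the objective gap. Global existence of the flow $\eta\colon[0,\infty)\to\operatorname{int}(\mathcal N)$ is already guaranteed by Theorem~\ref{thm:convergenceKNPGFlow}, and $\phi_C(\eta^\ast) = H(\eta^\ast) - H(\rho^\ast) < \infty$ since the (conditional) entropy is finite on the simplex. Thus the entire difficulty is concentrated in establishing the local lower bound \eqref{eq:lowerBound} with $\tau=1$, i.e.\ a constant $\omega>0$ and a neighborhood $N$ of $\eta^\ast$ in $\mathcal N$ with
\[
R^\ast - \mathfrak R(\eta) \ge \omega\, D_{\phi_C}(\eta^\ast, \eta) \quad \text{for all } \eta \in N.
\]
I would obtain this by bounding both sides against the Euclidean distance $\lVert\eta - \eta^\ast\rVert$: a lower bound $R^\ast - \mathfrak R(\eta) \ge c_1\lVert\eta - \eta^\ast\rVert$ from the sharpness of the linear program, and an upper bound $D_{\phi_C}(\eta^\ast, \eta) \le c_2\lVert\eta - \eta^\ast\rVert$ from the local regularity of the conditional relative entropy, after which $\omega = c_1/c_2$ works.

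For the first inequality I would use that $\eta^\ast$, being the \emph{unique} maximizer of the linear functional $\mathfrak R(\eta) = \langle r, \eta\rangle$ over the polytope $\mathcal N$, is necessarily a vertex (a singleton optimal face). The tangent cone $\{\lambda(\eta - \eta^\ast) : \eta\in\mathcal N,\ \lambda\ge 0\}$ is polyhedral, so its intersection $U$ with the unit sphere is compact. Uniqueness forces $\langle r, d\rangle < 0$ for every $d \in U$, since a feasible direction with $\langle r, d\rangle = 0$ would produce a second maximizer; compactness then gives $\max_{d\in U}\langle r, d\rangle = -c_1 < 0$. Writing $\eta - \eta^\ast = \lVert\eta - \eta^\ast\rVert\, d$ with $d\in U$ yields $R^\ast - \mathfrak R(\eta) = -\lVert\eta - \eta^\ast\rVert\,\langle r, d\rangle \ge c_1\lVert\eta - \eta^\ast\rVert$ for all $\eta \in \mathcal N$.

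The second inequality is the delicate step, because $\eta^\ast$ lies on $\partial\mathcal N$, where $\nabla\phi_C$ blows up, so the defining formula of the Bregman divergence must be handled with care. Here I would instead use the conditional-relative-entropy representation $D_{\phi_C}(\eta^\ast, \eta) = \sum_{s}\rho^\ast(s)\, D_{KL}(\pi^\ast(\cdot|s), \pi(\cdot|s))$. Under Assumption~\ref{ass:positivity} one has $\rho^\ast(s)>0$ for all $s$, so the conditioning map $\eta\mapsto(\rho,\pi)$ is smooth, hence Lipschitz, near $\eta^\ast$ (Proposition~\ref{prop:birational}); it therefore suffices to bound each conditional $D_{KL}$ by $\lVert\pi(\cdot|s) - \pi^\ast(\cdot|s)\rVert$. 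The key observation is that the coordinates $a$ with $\pi^\ast(a|s)=0$ contribute nothing to $D_{KL}(\pi^\ast(\cdot|s), \pi(\cdot|s))$, while the coordinates with $\pi^\ast(a|s)>0$ stay bounded away from $0$ as $\pi\to\pi^\ast$; hence the conditional $D_{KL}$ is a smooth function of $\pi$ on a neighborhood of $\pi^\ast$ vanishing at $\pi = \pi^\ast$, and is thus Lipschitz there. Combining with the first inequality gives \eqref{eq:lowerBound} with $\tau=1$ on a neighborhood of $\eta^\ast$. I expect this boundary analysis --- making precise that the blow-up of $\nabla\phi_C$ does not spoil the linear upper bound on the divergence --- to be the main obstacle.

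Finally, Lemma~\ref{prop:ratesTrajectories} with $\tau=1$ gives $D_{\phi_C}(\eta^\ast, \eta(t)) \le c\, e^{-\omega t}$. To pass to the objective, note that $t\mapsto\mathfrak R(\eta(t))$ is non-decreasing along the flow, so $h(t)\coloneqq R^\ast - \mathfrak R(\eta(t))$ is non-increasing; integrating \eqref{eq:derivativeu} from $t$ to $\infty$ and using $\eta(t)\to\eta^\ast$ gives $\int_t^\infty h(s)\,ds \le D_{\phi_C}(\eta^\ast, \eta(t)) \le c\,e^{-\omega t}$. Monotonicity of $h$ then yields $h(t) \le \int_{t-1}^{t} h(s)\,ds \le c\,e^{-\omega(t-1)}$, so that $R^\ast - \mathfrak R(\eta(t)) = O(e^{-\omega t})$, as claimed.
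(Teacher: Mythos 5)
Your proof is correct, and its first half coincides with the paper's: like the paper, you verify condition~\eqref{eq:lowerBound} with $\tau=1$ by combining the sharpness of the linear program at the unique maximizer, $R^\ast-\mathfrak R(\eta)\ge c_1\lVert\eta-\eta^\ast\rVert$, with a local linear bound $D_{\phi_C}(\eta^\ast,\eta)\le c_2\lVert\eta-\eta^\ast\rVert$, and then invoke Lemma~\ref{prop:ratesTrajectories}; the paper obtains the latter bound via $D_{\phi_C}\le D_{KL}$ and the $\sigma=1$ computation from Theorem~\ref{thm:sigmaNPGflowfastrates}, while you argue directly on the conditional-relative-entropy representation, which is the same substance. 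Where you genuinely depart from the paper is the final transfer from $D_{\phi_C}(\eta^\ast,\eta(t))=O(e^{-\omega t})$ to $R^\ast-\mathfrak R(\eta(t))=O(e^{-\omega t})$. The paper proves the pointwise bound $R^\ast-\mathfrak R(\eta)=O(\lVert\eta^\ast-\eta\rVert_1)$ and then a reverse-Pinsker-type inequality $\lVert\pi^\ast-\pi\rVert_1\le \frac{2}{\min_s\rho^\ast(s)}D_{\phi_C}(\eta^\ast,\eta)$, which crucially exploits that the optimal policy is deterministic (via $-\log\pi(a_s^\ast|s)\ge 1-\pi(a_s^\ast|s)$), together with the $O((1-\gamma)^{-1})$-Lipschitzness of $\pi\mapsto\eta$. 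You instead integrate the dissipation inequality \eqref{eq:derivativeu} at $\eta=\eta^\ast$ to get $\int_t^\infty\bigl(R^\ast-\mathfrak R(\eta(s))\bigr)\,ds\le D_{\phi_C}(\eta^\ast,\eta(t))$ and then use monotonicity of the objective along the flow. Your version is shorter and more robust: it needs neither determinism of $\pi^\ast$ nor any Lipschitz constant of the conditioning and state-action maps, so it applies verbatim to any Hessian NPG flow once \eqref{eq:lowerBound} with $\tau=1$ and finiteness of $\phi(\eta^\ast)$ are in place. What the paper's route buys is an explicit, trajectory-independent constant (scaling with $\lVert r\rVert$, $1/\min_s\rho^\ast(s)$ and $1/(1-\gamma)$) and a static inequality between the optimality gap and the Bregman divergence valid for every $\eta\in\mathcal N$, not only along solutions of the flow; your constant merely degrades by the harmless factor $e^{\omega}$ from the unit-length integration window.
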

\begin{proof}
Let $\phi_C$ denote the conditional entropy, so that $D_{\phi_C}(\eta^\ast, \eta) = D_{KL}(\eta^\ast, \eta) - D_{KL}(\rho^\ast, \rho) \le D_{KL}(\eta^\ast, \eta)$. Hence, we obtain just like in the case of $\sigma$-NPG flows for $\sigma=1$ that $D_{\phi_C}(\eta^\ast, \eta) = O(\mathfrak R(\eta^\ast) - \mathfrak R(\eta))$ for $\eta\to\eta^\ast$ and hence $D_{\phi_C}(\eta^\ast, \eta(t)) = O(e^{-ct})$ for some $c>0$ by Lemma~\ref{prop:ratesTrajectories}. 
Hence, it remains to estimate $\mathfrak R(\eta^\ast) - \mathfrak R(\eta)  = O(\lVert \eta^\ast - \eta \rVert_1)$ by the conditional relative entropy $D_{\phi_C}(\eta^\ast, \eta)$.
Note that $\pi^\ast$ is a deterministic policy and hence we can write $\pi^\ast(a^\ast_s|s) = 1$ and estimate
\begin{align*}
    D_{\phi_C}(\eta^\ast, \eta)  & = \sum_s \rho^\ast(s) D_{KL}(\pi^\ast(\cdot|s), \pi^\ast(\cdot|s)) = - \sum_s \rho^\ast(s) \log(\pi(a_s^\ast|s)) \\& \ge \sum_s \rho^\ast(s) (1-\pi(a_s^\ast|s)) = 2^{-1}\sum_s \rho^\ast(s) \lVert \pi^\ast(\cdot|s) - \pi(\cdot|s) \rVert_1  \\ & \ge 2^{-1}\left(\min_s\rho^\ast(s) \right) \cdot \lVert \pi^\ast - \pi \rVert_1.
\end{align*}
Here, we have used $\log(t) \le t-1$ as well as
\begin{align*}
    \lVert \pi^\ast(\cdot|s) - \pi(\cdot|s) \rVert_1 & = \sum_{a\ne a^\ast_s} \lvert \pi^\ast(a|s) - \pi(a|s) \rvert +  \lvert \pi^\ast(a|s) - \pi(a|s) \rvert \\
    & = \sum_{a\ne a_s^\ast} \pi(a|s) + (1-\pi(a_s^\ast|s)) = 2(1-\pi(a_s^\ast|s)).
\end{align*} 
Now we observe that the mapping $\pi\mapsto \eta$ is $L$-Lipschitz with constant $L=O((1-\gamma)^{-1})$. The fact that $L=O((1-\gamma)^{-1})$ follows from the policy gradient theorem as $\partial_{\pi(a|s)} \eta^\pi = \rho^\pi(s) (I-\gamma P_\pi^\top)^{-1}e_{(s,a)}$, see also~\cite[Proposition 48]{mueller2022geometry}. 
In turn, it holds that 
\[ 
\lVert \eta^\ast - \eta(t) \rVert_1\le L \lVert \pi^\ast - \pi(t) \rVert_1 \le \frac{2L}{ \min_s\rho^\ast(s)} \cdot D_{\phi_C}(\eta^\ast, \eta(t)) = O(e^{-ct}). 
\]
Altogether this implies $R^\ast - \mathfrak R(\eta(t)) = O(e^{-ct})$, which concludes the proof. 
The $O$ notation hides constants that scale with the norm of the instantaneous reward vector $r$, inversely with the minimum state probability, and inversely with $(1-\gamma)$ where $\gamma$ is the discount rate. 
\end{proof}

\begin{theorem}[Improved convergence rates for $\sigma$-NPG flow]\label{thm:sigmaNPGflowfastrates}
Consider Setting~\ref{set:MDPconvergence}, where $\phi = \phi_\sigma$ for some $\sigma\in [1, 2)$ as defined in \eqref{eq:sigmaFunctions}, and assume that there is a unique maximizer $\eta^\ast$ of the unregularized reward $\mathfrak R$. Then $R^\ast - \mathfrak R(\eta(t)) \in O(g_\sigma(t))$, where 
    \[ 
    g_\sigma(t) = \begin{cases}
        e^{-ct} \quad & \text{if } \sigma = 1 \\
        t^{-1/(\sigma-1)} & \text{if } \sigma\in(1,2),
    \end{cases}
    \]
for some $c>0$, where $\eta\colon[0, \infty)\to\operatorname{int}(\mathcal N)$ denotes the solution of the $\sigma$-NPG flow. 
\end{theorem}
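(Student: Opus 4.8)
The plan is to deduce the theorem from the abstract rate estimate in Lemma~\ref{prop:ratesTrajectories}, whose only nontrivial hypothesis is the local lower bound \eqref{eq:lowerBound} comparing the objective gap with a power of the Bregman divergence. Global existence of the flow and the finiteness $\phi_\sigma(\eta^\ast)<\infty$, which holds for every $\sigma\in[1,2)$ since then $2-\sigma>0$, are already provided by Theorem~\ref{cor:convSigmaNPGFlow}. Thus the task reduces to identifying the correct exponent $\tau$ in \eqref{eq:lowerBound} and then converting the resulting decay of $D_{\phi_\sigma}(\eta^\ast,\eta(t))$ into decay of $R^\ast-\mathfrak R(\eta(t))$. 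I claim $\tau=\frac{1}{2-\sigma}$, and that both the verification of \eqref{eq:lowerBound} and the final conversion follow from two local comparisons valid for $\eta\in\mathcal N$ near $\eta^\ast$: the linear-programming sharpness bound $c_0\lVert\eta^\ast-\eta\rVert\le R^\ast-\mathfrak R(\eta)\le\lVert r\rVert\lVert\eta^\ast-\eta\rVert$ and the Bregman comparison $c_1\lVert\eta^\ast-\eta\rVert^{2-\sigma}\le D_{\phi_\sigma}(\eta^\ast,\eta)\le C_1\lVert\eta^\ast-\eta\rVert^{2-\sigma}$.

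The sharpness bound is standard: as $\mathfrak R=\langle r,\cdot\rangle$ is linear and $\eta^\ast$ is its unique maximizer over the polytope $\mathcal N$, the point $\eta^\ast$ is a vertex and the objective strictly decreases along every incident edge; expressing a nearby $\eta$ as a convex combination of vertices then yields the displayed two-sided estimate. For the Bregman comparison I exploit that $\phi_\sigma$ is separable, so that $D_{\phi_\sigma}(\eta^\ast,\eta)=\sum_{s,a}d_\sigma(\eta^\ast(s,a),\eta(s,a))$ with $d_\sigma$ the one-dimensional Bregman divergence of the $\sigma$-potential. A direct computation gives $d_\sigma(0,b)=\frac{b^{2-\sigma}}{2-\sigma}$ for $\sigma\neq1$ and $d_1(0,b)=b$, so coordinates with $\eta^\ast(s,a)=0$ contribute a term of order $b^{2-\sigma}$ while coordinates with $\eta^\ast(s,a)>0$ contribute a term of order $(\eta^\ast(s,a)-\eta(s,a))^2$. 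Since $2-\sigma\le2$ and the increments are small, the boundary coordinates dominate and the upper bound $D_{\phi_\sigma}(\eta^\ast,\eta)\le C_1\lVert\eta^\ast-\eta\rVert^{2-\sigma}$ is immediate.

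The lower Bregman bound is the crux. Let $\partial\coloneqq\{(s,a):\eta^\ast(s,a)=0\}$. Because $\eta^\ast$ is a vertex of $\mathcal N=\mathbb R^{\SS\times\AA}_{\ge0}\cap\mathcal L$, the active non-negativity constraints cut it out within the affine hull $\mathcal L$; equivalently, the coordinate projection $v\mapsto(v_i)_{i\in\partial}$ has trivial kernel on $T\mathcal L$ and is hence bounded below, so there is $\kappa>0$ with $\sum_{i\in\partial}\lvert v_i\rvert\ge\kappa\lVert v\rVert$ for all $v\in T\mathcal L$. Applying this to $v=\eta-\eta^\ast$, and using that $v_i=\eta_i\ge0$ for $i\in\partial$, gives $\sum_{i\in\partial}\eta_i\ge\kappa\lVert\eta^\ast-\eta\rVert$. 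The elementary subadditivity $\sum_i a_i^{2-\sigma}\ge(\sum_i a_i)^{2-\sigma}$, valid for $a_i\ge0$ because $2-\sigma\in(0,1]$, together with the boundary-term asymptotics from the previous step then yields $D_{\phi_\sigma}(\eta^\ast,\eta)\ge c\big(\sum_{i\in\partial}\eta_i\big)^{2-\sigma}\ge c_1\lVert\eta^\ast-\eta\rVert^{2-\sigma}$. This vertex-geometry argument, and especially the boundedness below of the active coordinate projection, is where I expect the main work to lie.

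Assembling the estimates, the upper Bregman bound and sharpness give $R^\ast-\mathfrak R(\eta)\ge c_0\lVert\eta^\ast-\eta\rVert\ge c_0 C_1^{-1/(2-\sigma)}D_{\phi_\sigma}(\eta^\ast,\eta)^{1/(2-\sigma)}$, which is precisely \eqref{eq:lowerBound} with $\tau=\frac{1}{2-\sigma}$. For $\sigma=1$ this is $\tau=1$, and Lemma~\ref{prop:ratesTrajectories} gives $D_{\phi_\sigma}(\eta^\ast,\eta(t))\le ce^{-\omega t}$; for $\sigma\in(1,2)$ one has $\tau>1$ and obtains $D_{\phi_\sigma}(\eta^\ast,\eta(t))\le c\,t^{-1/(\tau-1)}=c\,t^{-(2-\sigma)/(\sigma-1)}$. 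Converting back through the lower Bregman bound and sharpness, $R^\ast-\mathfrak R(\eta(t))\le\lVert r\rVert\lVert\eta^\ast-\eta(t)\rVert\le\lVert r\rVert c_1^{-1/(2-\sigma)}D_{\phi_\sigma}(\eta^\ast,\eta(t))^{1/(2-\sigma)}$, which is $O(e^{-ct})$ when $\sigma=1$ and $O(t^{-1/(\sigma-1)})$ when $\sigma\in(1,2)$. That $\eta(t)\to\eta^\ast$, so the local estimates eventually apply along the trajectory, follows from $\mathfrak R(\eta(t))\to R^\ast$ in Theorem~\ref{cor:convSigmaNPGFlow} together with the uniqueness of the maximizer, exactly as in the proof of Lemma~\ref{prop:ratesTrajectories}.
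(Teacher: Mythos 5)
Your proof is correct, and while it follows the paper's overall skeleton (verify the growth condition \eqref{eq:lowerBound} with $\tau=(2-\sigma)^{-1}$, invoke Lemma~\ref{prop:ratesTrajectories}, then convert the Bregman decay back into an objective gap), the way you carry out the conversion step is genuinely different from the paper --- and in fact more robust. The paper establishes $\lVert\eta^\ast-\eta\rVert_1 = O\bigl(D_{\phi_\sigma}(\eta^\ast,\eta)^{1/(2-\sigma)}\bigr)$ purely coordinatewise: for coordinates with $\eta^\ast(s,a)=0$ the one-dimensional estimate is immediate, and for coordinates with $\eta^\ast(s,a)>0$ it uses local strong convexity to get $\lvert x-y\rvert = O(d^{1/2})$ and then asserts $O(d^{1/2}) = O(d^{1/(2-\sigma)})$. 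That last containment goes the wrong way: since $1/(2-\sigma)>1/2$ for $\sigma\in(1,2)$, one has $d^{1/(2-\sigma)} = o(d^{1/2})$ as $d\to0$, and indeed the per-coordinate claim fails on interior coordinates (take $\eta^\ast_i=1$, $\eta_i = 1-\varepsilon$: then $d\sim\varepsilon^2/2$, while $d^{1/(2-\sigma)}\sim c\,\varepsilon^{2/(2-\sigma)} = o(\varepsilon)$). Your vertex-geometry argument is precisely what closes this gap: since $\eta^\ast$ is the unique maximizer of a linear functional over $\mathcal N$ it is a vertex, so the projection onto the active coordinates $\partial$ has trivial kernel on $T\mathcal L$ and hence $\sum_{i\in\partial}\eta_i\ge\kappa\lVert\eta-\eta^\ast\rVert$ for all $\eta\in\mathcal N$; combined with $d_\sigma(0,b)=b^{2-\sigma}/(2-\sigma)$ and subadditivity of $t\mapsto t^{2-\sigma}$, the boundary coordinates alone yield $D_{\phi_\sigma}(\eta^\ast,\eta)\ge c_1\lVert\eta^\ast-\eta\rVert^{2-\sigma}$, so the problematic interior coordinates never enter. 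What your route buys is a correct (and global-on-$\mathcal N$, not merely local) lower Bregman bound at the cost of invoking the polytope structure of $\mathcal N$, which is in any case available; it also replaces Pinsker's inequality in the $\sigma=1$ case by a linear bound, improving the constant in the exponential rate. The remaining ingredients --- the LP sharpness bound from uniqueness of the maximizer and the upper Bregman bound used to verify \eqref{eq:lowerBound} --- coincide with the paper's.
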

\begin{proof}
The key is to show that~\eqref{eq:lowerBound} holds for $\tau = (2-\sigma)^{-1}\ge1$. 
To see that this holds, we first consider the case $\sigma \in(1, 2)$, where we obtain
\[ D_\sigma(\eta^\ast, \eta) = \sum_{s, a}\frac{\eta^\ast(s,a)^{2-\sigma}}{(1-\sigma)(2-\sigma)} - \sum_{s, a}\frac{\eta(s,a)^{2-\sigma}}{(1-\sigma)(2-\sigma)} - \sum_{s, a} \frac{\eta(s,a)^{1-\sigma}(\eta^\ast(s,a)-\eta(s,a))}{1-\sigma}. \]
We can bound every summand by $O(\lvert \eta^\ast(s,a) - \eta(s,a) \rvert)$ if $\eta^\ast(s,a)>0$ and $O(\lvert \eta^\ast(s,a) - \eta(s,a) \rvert^{2-\sigma})$ if $\eta^\ast(s,a)=0$ for $\eta\to\eta^\ast$ respectively. Overall, this shows that
\begin{align*}
    D_\sigma(\eta^\ast, \eta) = O(\lVert \eta^\ast - \eta \rVert^{2-\sigma}) = O((\mathfrak R(\eta^\ast) - \mathfrak R(\eta))^{2-\sigma}) \quad \text{for } \eta\to\eta^\ast,
\end{align*}
where the last estimate holds since $\eta^\ast$ is the unique minimizer of the linear function $\mathfrak R$ over the polytope $\mathcal N$. By Lemma~\ref{prop:ratesTrajectories} we obtain $D_\sigma(\eta^\ast, \eta(t))=O(t^{-1/(\tau-1)}) = O(t^{-(2-\sigma)/(\sigma-1)})$. It remains to estimate the value of $\mathfrak R$ by means of the Bregman divergence $D_\sigma$. For this, we note that $\mathfrak R(\eta^\ast) - \mathfrak R(\eta) = O(\lVert \eta^\ast-\eta\rVert_1)$ and estimate the individual terms. First, note that for $x\to y$ (with $x, y\ge0$) it holds that 
\begin{align*}
    \lvert x - y \rvert & = O\left(\left(\frac{y^{2-\sigma}}{(1-\sigma)(2-\sigma)} - \frac{x^{2-\sigma}}{(1-\sigma)(2-\sigma)} - \frac{x^{1-\sigma}(y-x)}{1-\sigma}\right)^{1/(2-\sigma)}\right).
\end{align*}
For $y=0$ this is immediate and for $y>0$ the local strong convexity of $x\mapsto x^{2-\sigma}$ around $y$ implies 
    \begin{align*}
    \lvert x-y\rvert 
    &= O\left(\left(y^{2-\sigma} - x^{2-\sigma} - (2-\sigma)x^{1-\sigma}(y - x)\right)^{1/2}\right) \\
    &= O\left(\left(y^{2-\sigma} - x^{2-\sigma} - (2-\sigma)x^{1-\sigma}(y - x)\right)^{1/(2-\sigma)}\right) 
    \end{align*}
for $x\to y$. Now, Jensen's inequality yields
\[ 
\lVert \eta^\ast - \eta \rVert_1 = O(D_\sigma(\eta^\ast, \eta)^{1/(2-\sigma)}). 
\]
Overall, we obtain
\[ 
\mathfrak R(\eta^\ast) - \mathfrak R(\eta(t)) = O(\lVert \eta^\ast- \eta(t) \rVert_1^{1/(2-\sigma)}) =  O(t^{-1/(1-\sigma)}).  
\]

The case $\sigma=1$ can be treated similarly, where one obtains $D_\sigma(\eta^\ast, \eta)=O(\lVert \eta^\ast-\eta\rVert)=O(\mathfrak R(\eta^\ast) - \mathfrak R(\eta))$ for $\eta\to\eta^\ast$.
To relate the $L^1$-norm to the Bregman divergence one can employ Pinsker's inequality $\lVert \eta^\ast-\eta\rVert_1\le \sqrt{2 D_{KL}(\eta^\ast, \eta)} = \sqrt{2 D_{\sigma}(\eta^\ast, \eta)}$. 
\end{proof}

Compared to Theorem~\ref{cor:convSigmaNPGFlow} the above Theorem~\ref{thm:sigmaNPGflowfastrates} improves the $O(t^{-1})$ rates for $\sigma\in[1,2)$. Later, we conduct numerical experiments that indicate that the rates $O(t^{-1/(\sigma-1)})$ also hold for $\sigma\ge2$ and are tight. 

\paragraph{Numerical examples} 

We use the following example proposed by Kakade~\cite{kakade2001natural} and which 
was also used in~\cite{bagnell2003covariant, morimura2008new}. 
We consider an MDP with two states $s_1, s_2$ and two actions $a_1, a_2$, with the transitions and instantaneous rewards shown in Figure~\ref{fig:kakadeExample}.  
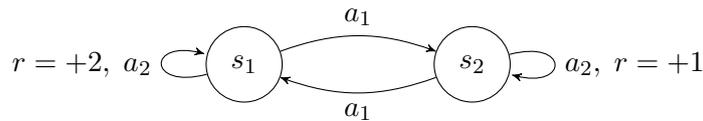
\begin{figure}[ht]
    \centering
    \begin{tikzpicture}
    \node[shape=circle, draw=black, minimum size=1cm] (A) at (0,0) {\(s_1\)};
    \node[shape=circle,draw=black,minimum size=1cm] (B) at (3,0) {\(s_2\)};
    \path [->] (B) [bend left=20] edge node[below] {\(a_1\)} (A);
    \path [->] (A) [bend left=20] edge node[above] {\(a_1\)} (B);
    \path [->] (A) [loop left] edge node[left] {\(r=+2,\; a_2\)} (A);
    \path [->] (B) [loop right] edge node[right] {\(a_2,\; r=+1\)} (B);
\end{tikzpicture}
\caption{Transition graph and reward of the MDP example.} 
\label{fig:kakadeExample}
\end{figure}

We adopt the initial distribution $\mu(s_1)=0.2, \mu(s_2) = 0.8$ and work with a discount factor of $\gamma = 0.9$, whereas Kakade studied the mean reward case. Note however that the experiments can be performed for arbirtrarily large discount factor, where we chose a smaller factor since the correspondence between the policy polytope and the state-action polytope is clearer to see in the illustrations. 
We consider tabular softmax policies and plot the trajectories of vanilla PG, Kakade's NPG, and $\sigma$-NPG for the values $\sigma\in\{-0.5, 0, 0.5, 1, 1.5, 2, 3, 4\}$ for $30$ random (but the same for every method) initializations. 
We plot the trajectories in the state-action space (Figure~\ref{fig:illustrations}) and in the policy polytope (Figure~\ref{fig:policyTrajectories}). 
In order to put the convergence results from this section into perspective, we plot the evolution of the optimality gap $R^\ast-R(\theta(t))$ (Figure~\ref{fig:convergenceRates}). 
We use an adaptive step size $\Delta t_k$, which 
prevents the blowup of the parameters for $\sigma<1$, and hence we do not consider the number of iterations but rather the sum of the step sizes as a measure for the time, $t_n = \sum_{k=1}^n \Delta t_k$. 
For vanilla PG and $\sigma\in(1, 2)$ we expect a decay at rate $O(t^{-1})$~\cite{mei2020global} and $O(t^{-1/(\sigma-1)})$ by Theorem~\ref{thm:sigmaNPGflowfastrates}. 
Therefore we use a logarithmic (on both scales) plot for vanilla PG and $\sigma>1$ and also indicate the predicted rate using a dashed gray line. For Kakade's and Morimuras NPG we expect linear convergence by Theorem~\ref{thm:linearconvergenceKakade} and \ref{thm:sigmaNPGflowfastrates} respectively and hence use a semi-logarithmic plot. 

\begin{figure}[ht]
\centering
\resizebox{1\textwidth}{!}{
\begin{tikzpicture}[scale=1.3]
\node[inner sep=0pt] (r1) at (0,0)
    {\includegraphics[width=3cm, clip=true, trim=1.5cm 1.5cm 1.5cm 0cm]{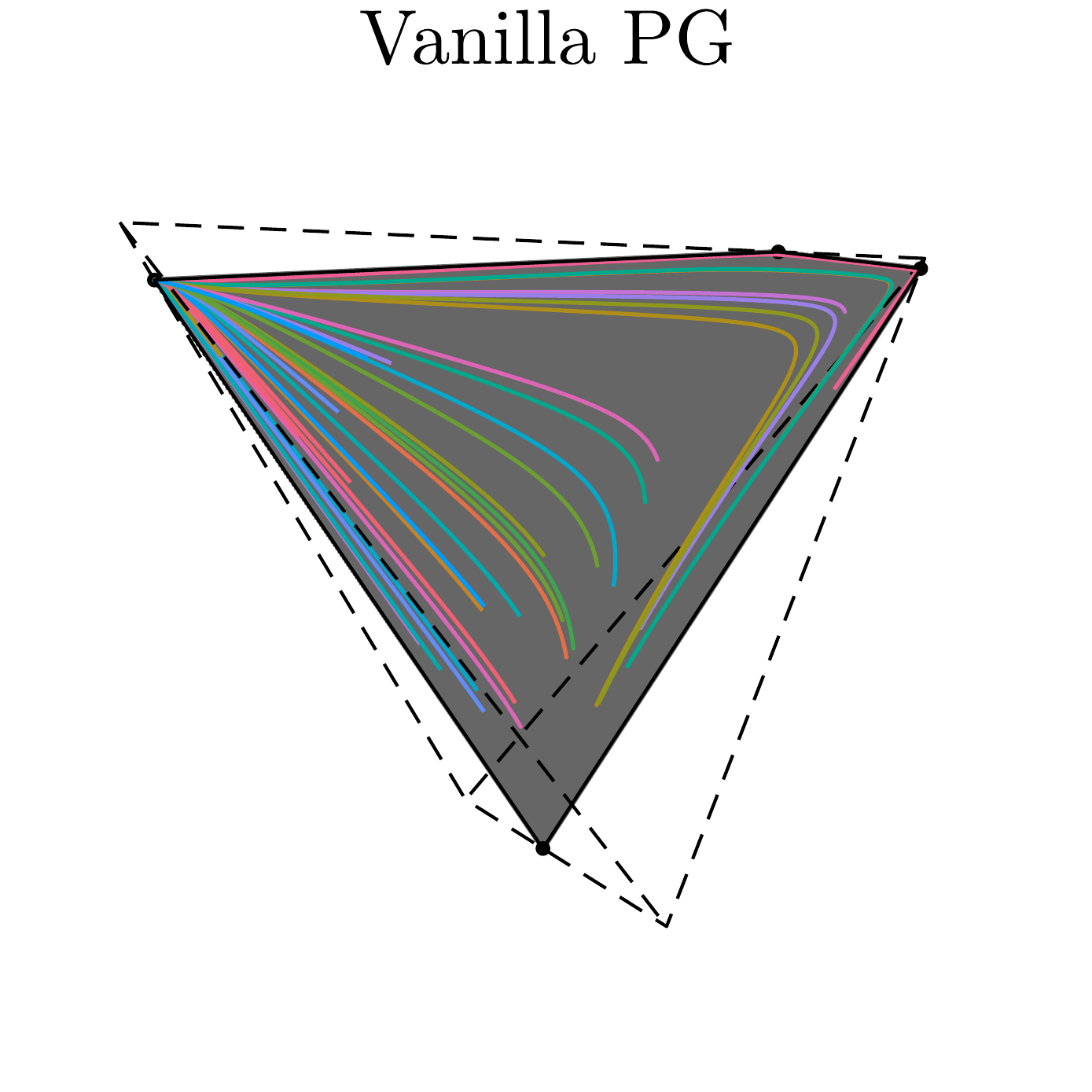}};
\node[inner sep=0pt] (r2) at (2.4,0)
    {\includegraphics[width=3cm, clip=true, trim=1.5cm 1.5cm 1.5cm 0cm]{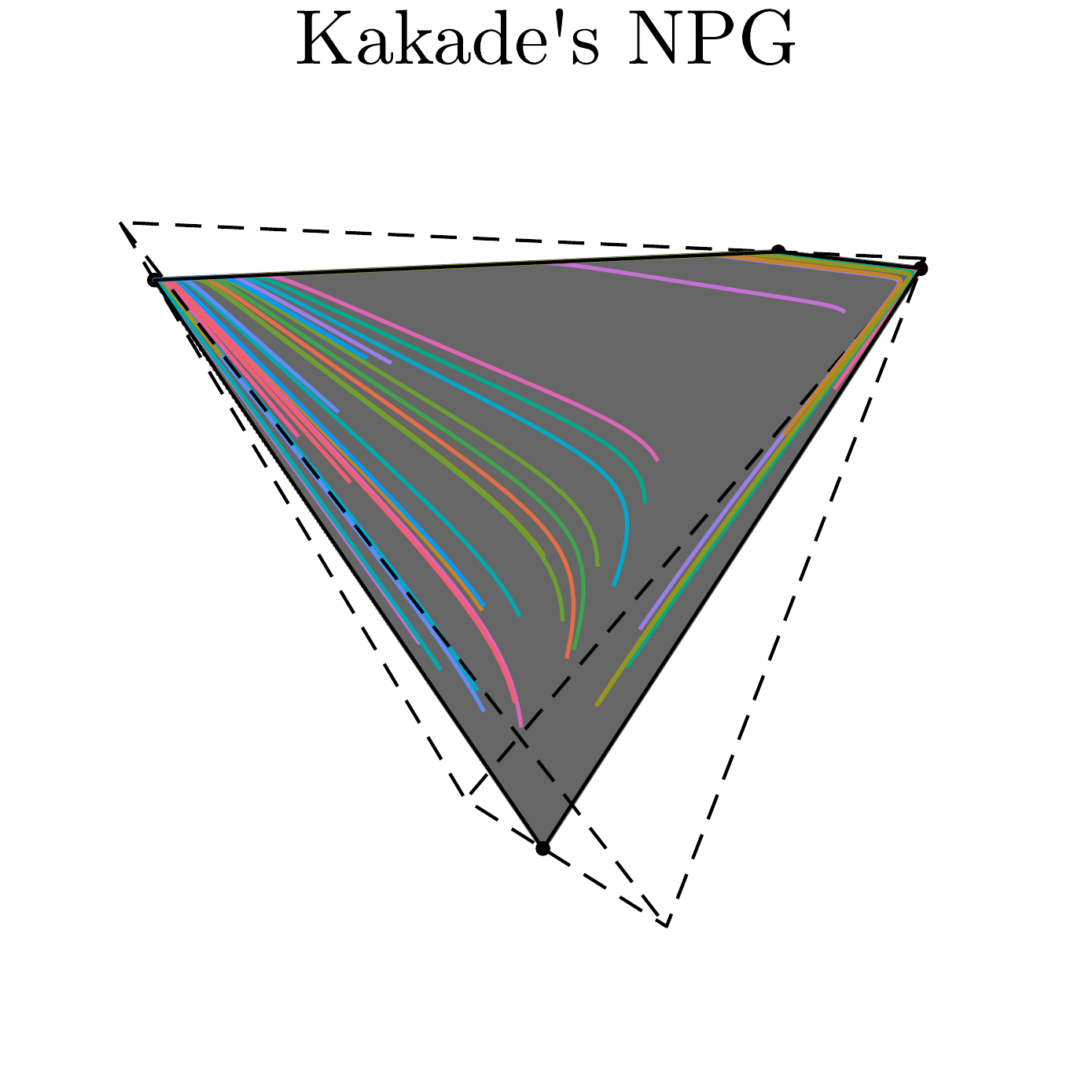}};
\node[inner sep=0pt] (r3) at (4.8,0)
    {\includegraphics[width=3cm, clip=true, trim=1.5cm 1.5cm 1.5cm 0cm]{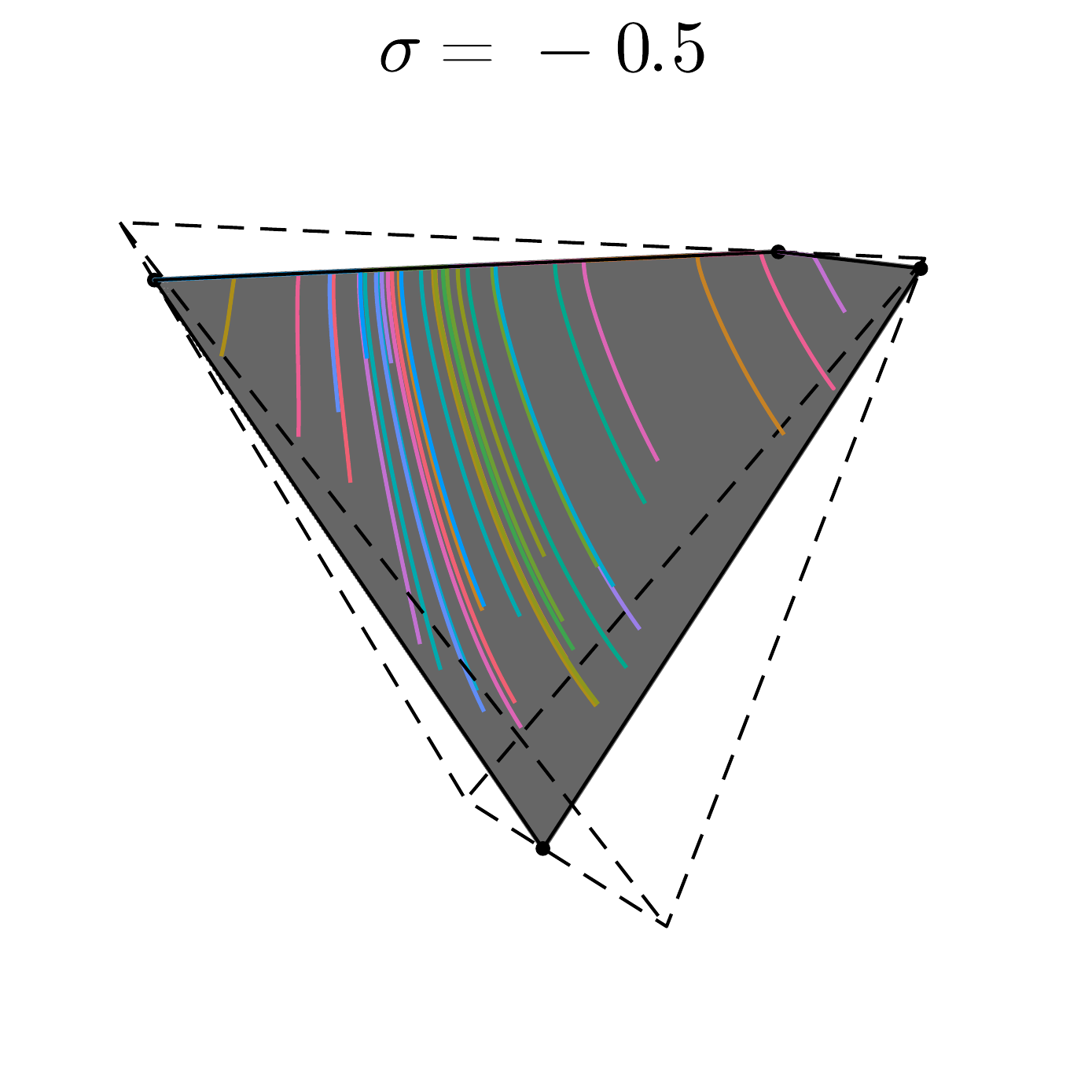}};
\node[inner sep=0pt] (r4) at (7.2,0)
    {\includegraphics[width=3cm, clip=true, trim=1.5cm 1.5cm 1.5cm 0cm]{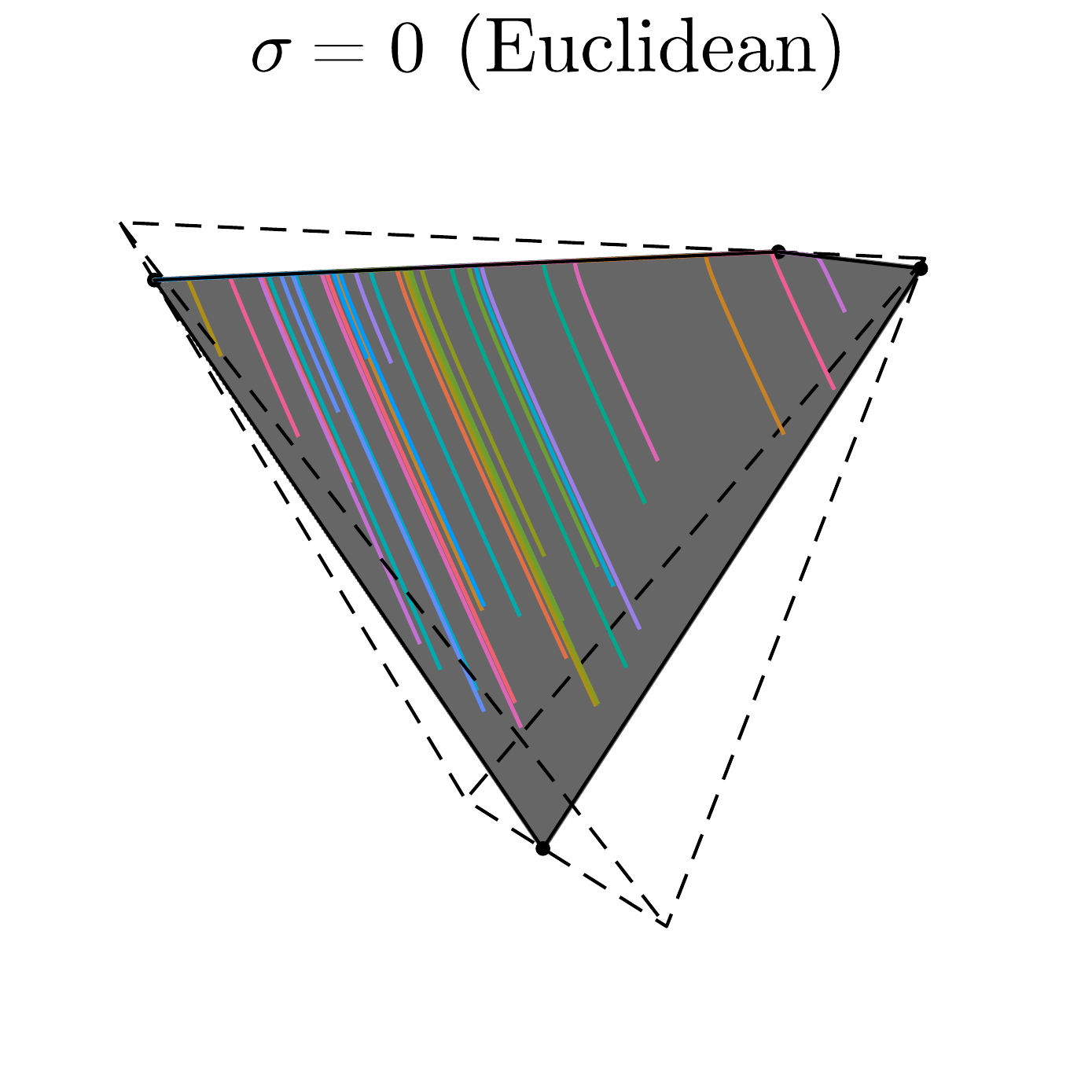}};
\node[inner sep=0pt] (r5) at (9.6,0)
    {\includegraphics[width=3cm, clip=true, trim=1.5cm 1.5cm 1.5cm 0cm]{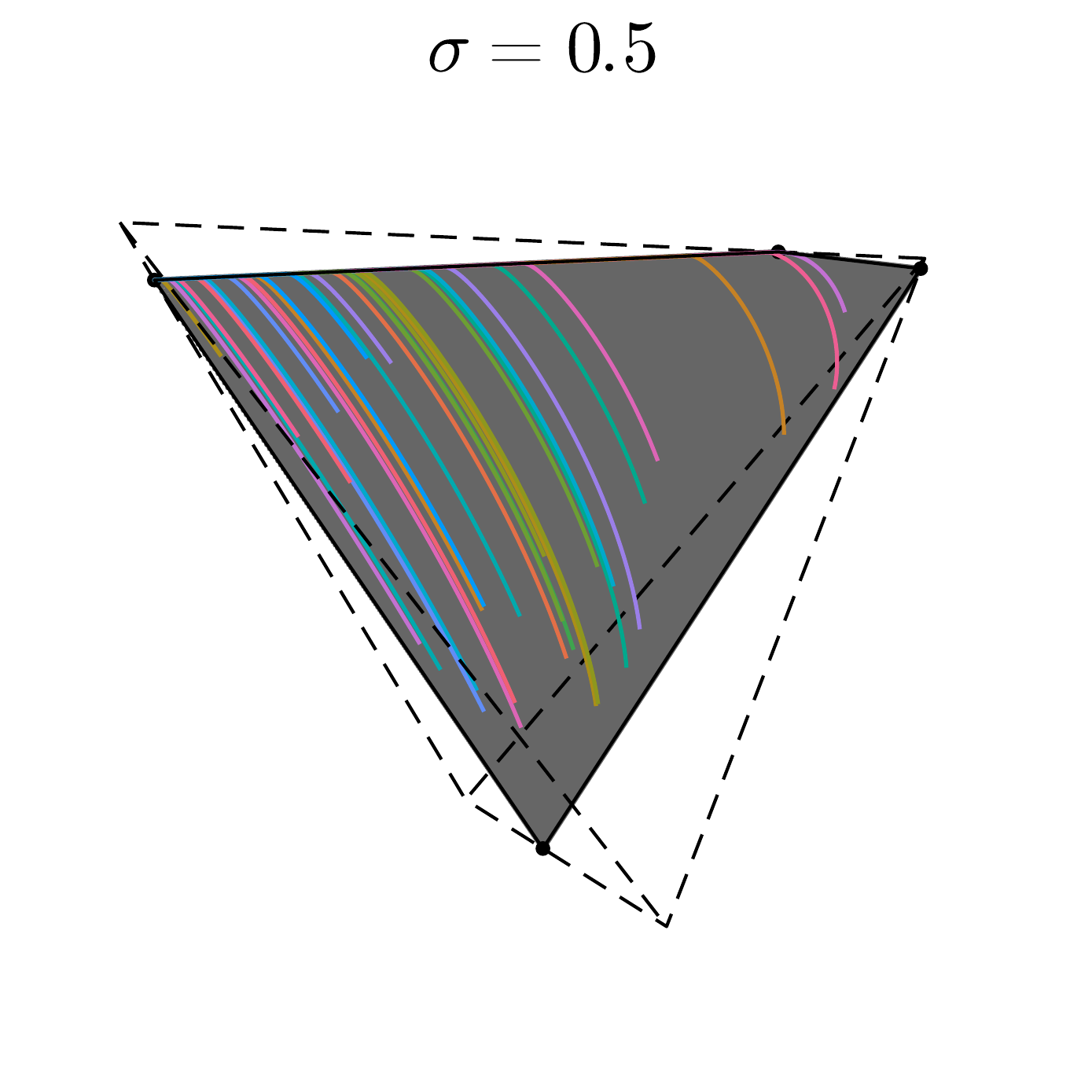}};
\node[inner sep=0pt] (r6) at (0,-2.6)
    {\includegraphics[width=3cm, clip=true, trim=1.5cm 1.5cm 1.5cm 0cm]{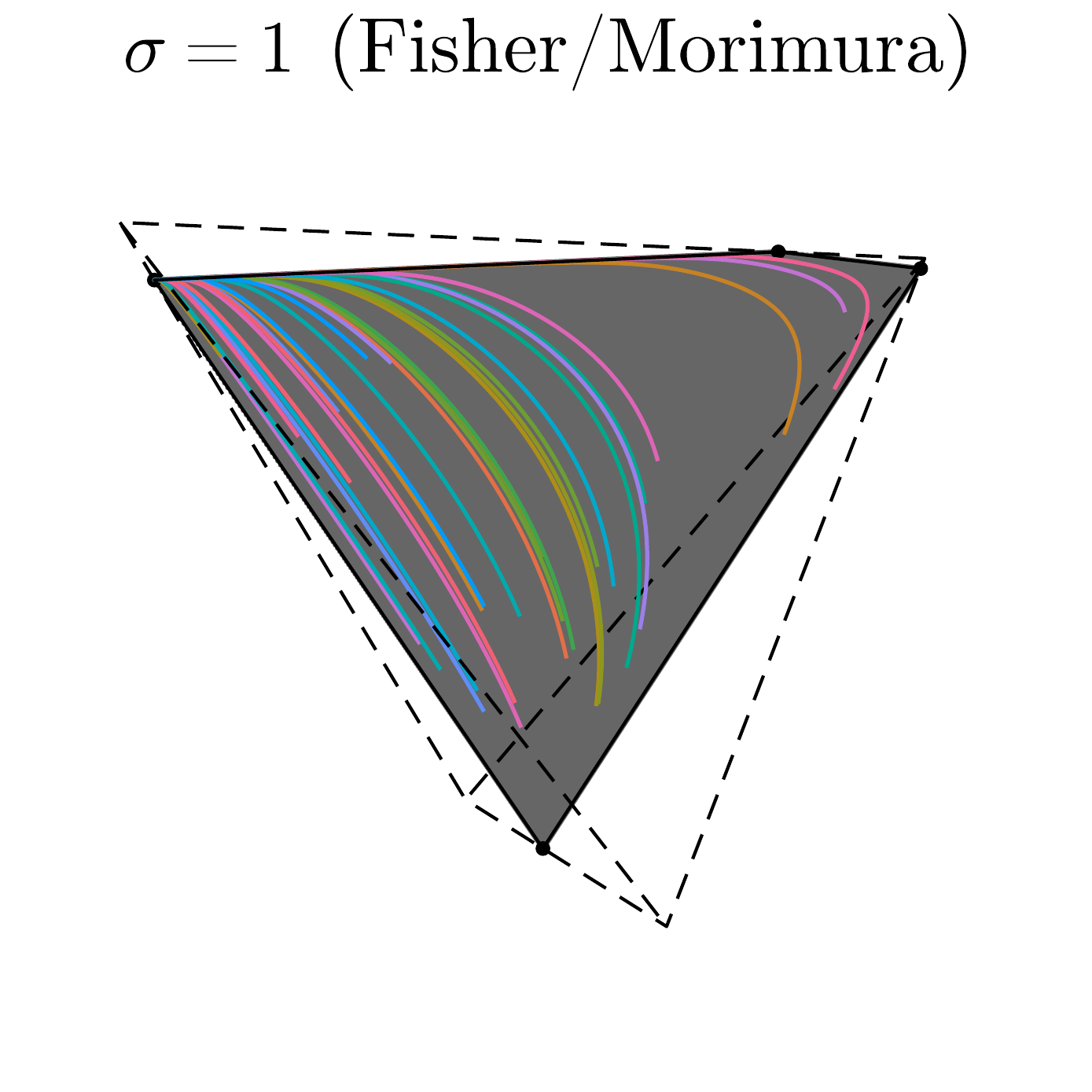}};
\node[inner sep=0pt] (r7) at (2.4,-2.6)
    {\includegraphics[width=3cm, clip=true, trim=1.5cm 1.5cm 1.5cm 0cm]{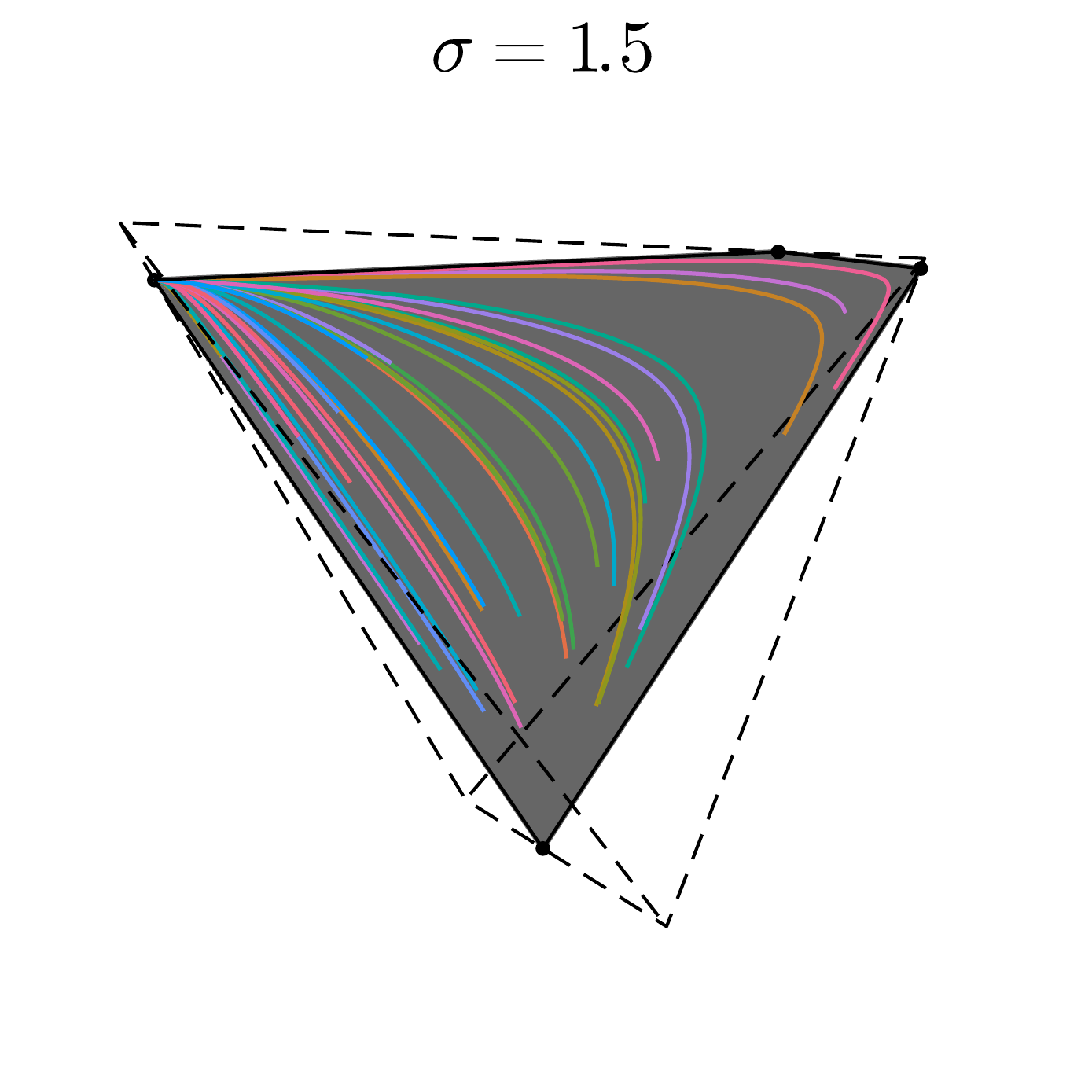}};
\node[inner sep=0pt] (r8) at (4.8,-2.6)
    {\includegraphics[width=3cm, clip=true, trim=1.5cm 1.5cm 1.5cm 0cm]{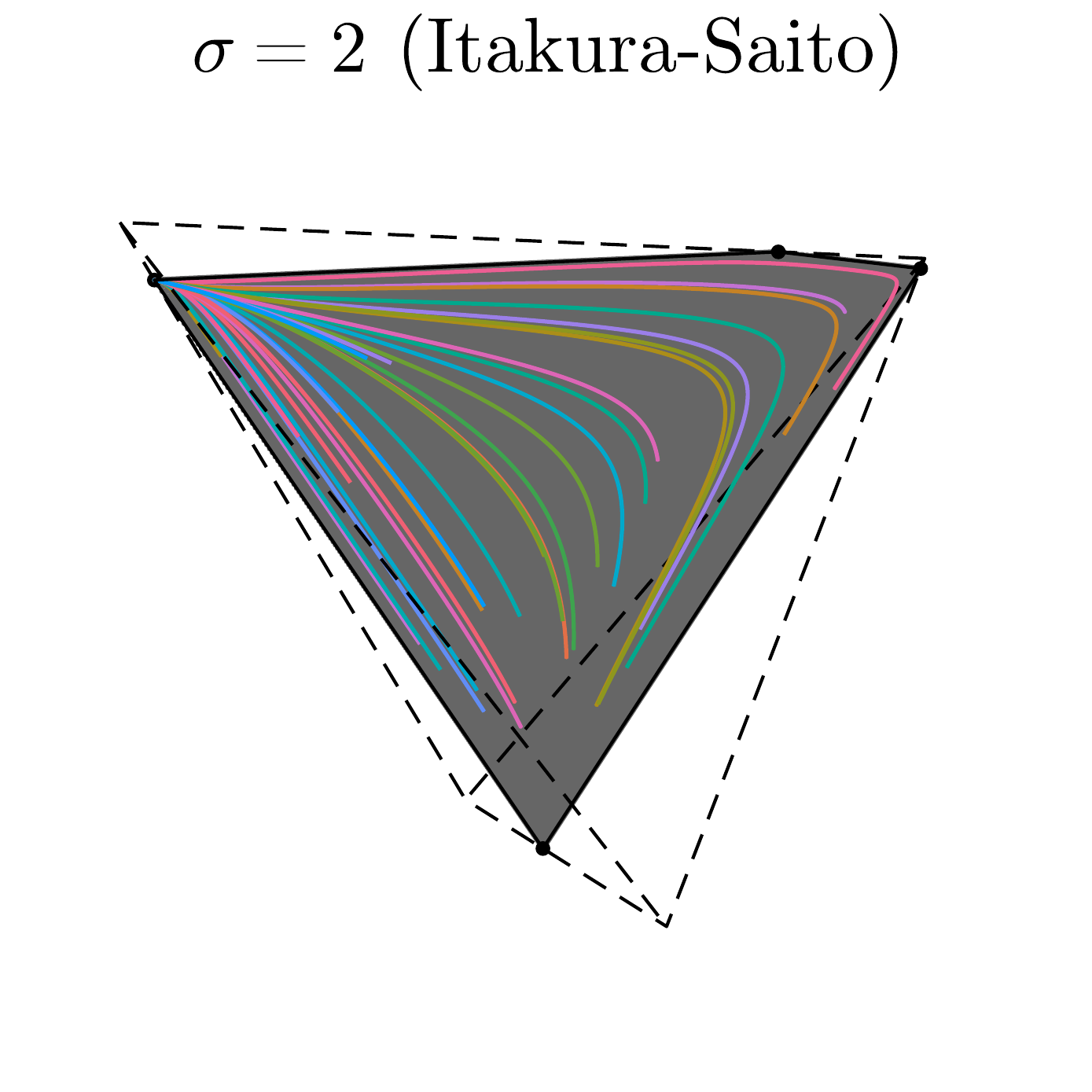}};
\node[inner sep=0pt] (r9) at (7.2,-2.6)
    {\includegraphics[width=3cm, clip=true, trim=1.5cm 1.5cm 1.5cm 0cm]{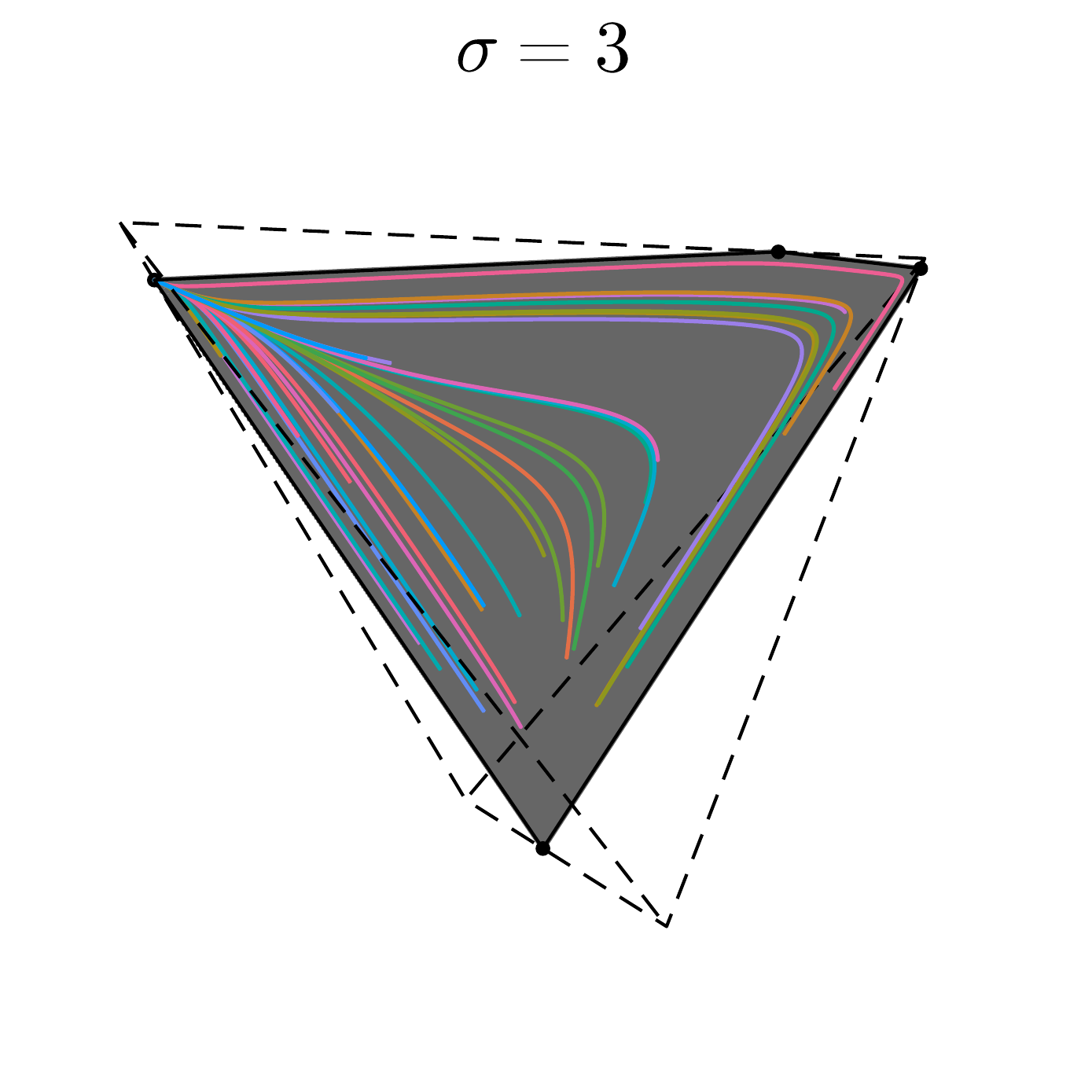}};
\node[inner sep=0pt] (r10) at (9.6,-2.6)
    {\includegraphics[width=3cm, clip=true, trim=1.5cm 1.5cm 1.5cm 0cm]{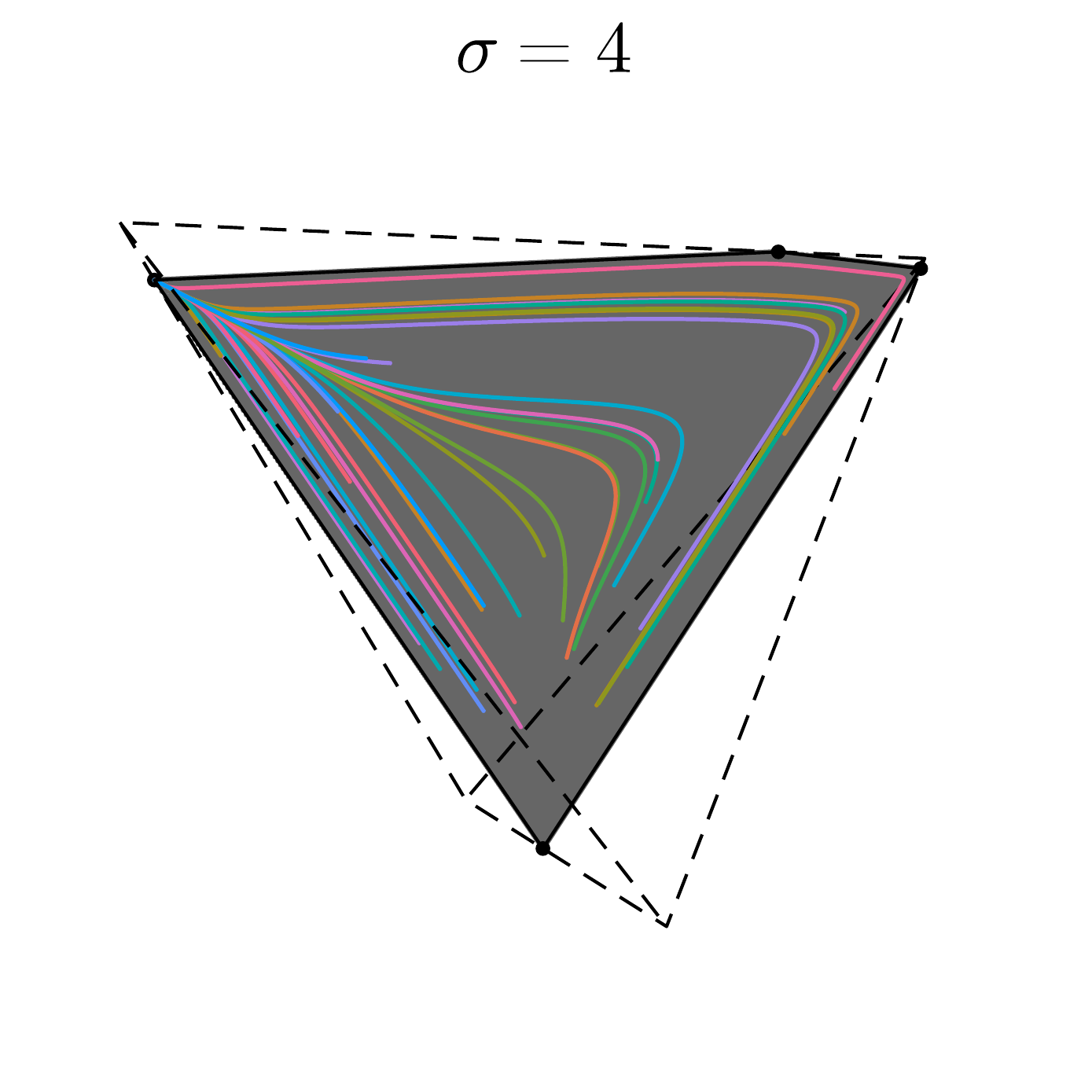}};
\end{tikzpicture}
}
\caption{State-action trajectories for different PG methods, which are vanilla PG, Kakade's NPG and $\sigma$-NPG, where Morimura's NPG corresponds to $\sigma=1$; the state-action polytope is shown in gray inside a three dimensional projection of the the simplex $\Delta_{\SS\times\AA}$;  shown are trajectories with the same random $30$ initial values for every method; the maximizer $\eta^\ast$ is located at the upper left corner of the state-action polytope. }
\label{fig:illustrations}
\end{figure}
First, we note that for $\sigma \in\{-0.5, 0, 0.5\}$ the trajectories of $\sigma$-NPG flow hit the boundary of the state-action polytope $\mathcal N$, which is depicted in gray inside the simplex $\Delta_{\mathcal S\times\AA}$. This is consistent with our analysis, since the functions $\phi_\sigma$ are Legendre type functions only for $\sigma\in[1, \infty)$ and hence only in this case is the NPG flow is guaranteed to exhibit long time solutions. 
However, we observe finite-time convergence of the trajectories towards the global optimum (see Figure~\ref{fig:convergenceRates}), which we suspect to be due to the discretization error. 

For the other methods, namely vanilla PG, Kakade's NPG and $\sigma$-NPG with $\sigma\in[1,\infty)$, Theorem~\ref{cor:convSigmaNPGFlow} and Theorem~\ref{thm:convergenceKNPGFlow} show the global convergence of the gradient flow trajectories, which we also observe both in state-action space and in policy space (see Figures~\ref{fig:illustrations} and \ref{fig:policyTrajectories} respectively). 
When considering the convergence in objective value 
we observe that both Kakade's and Morimura's NPG exhibit a linear rate of convergence as asserted by Theorem~\ref{thm:linearconvergenceKakade} and Theorem~\ref{thm:sigmaNPGflowfastrates}, 
whereby Kakade's NPG appears to have more severe plateaus in some examples. 
For vanilla PG and $\sigma >1$ we observe a sublinear convergence rate of $O(t^{-1})$ and $O(t^{-1/(\sigma-1)})$ respectively, which are shown via dashed gray lines in each case. This confirms the convergence rate $O(t^{-1})$ for vanilla PG~\cite{mei2020global} and indicates that the rate $O(t^{-1/(\sigma-1)})$ shown for $\sigma\in(1,2)$ is also valid in the regime $\sigma>2$. 
Finally, we observe that larger $\sigma$ appears to lead to more severe plateaus, which is apparent in the convergence in objective and also from the evolution in policy space and in state-action space. 

\begin{figure}[ht]
\centering
\resizebox{1\textwidth}{!}{
\begin{tikzpicture}
\node[inner sep=0pt] (r1) at (0,0)
    {\includegraphics[width=3cm]{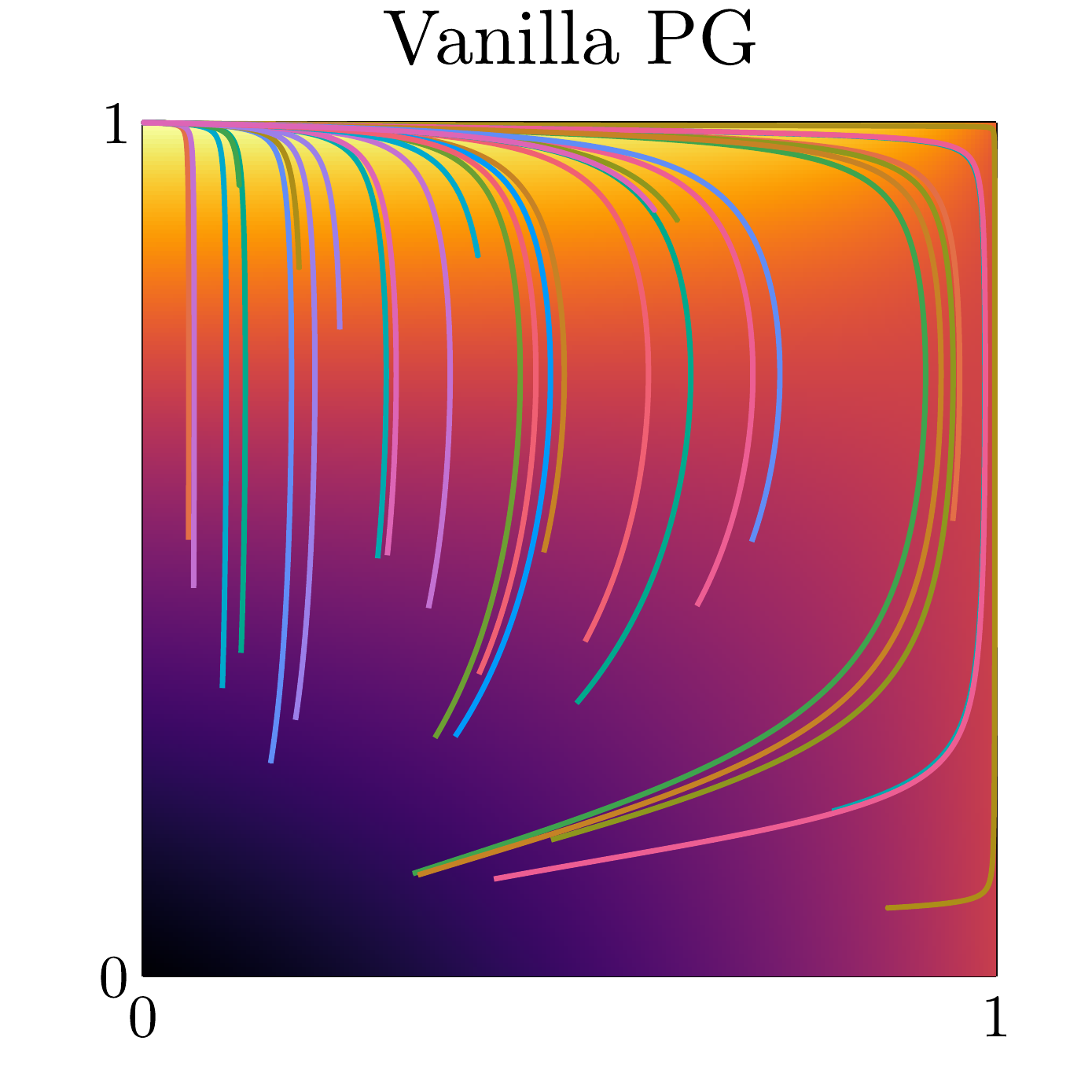}};
\node[inner sep=0pt] (r2) at (3.1,0)
    {\includegraphics[width=3cm]{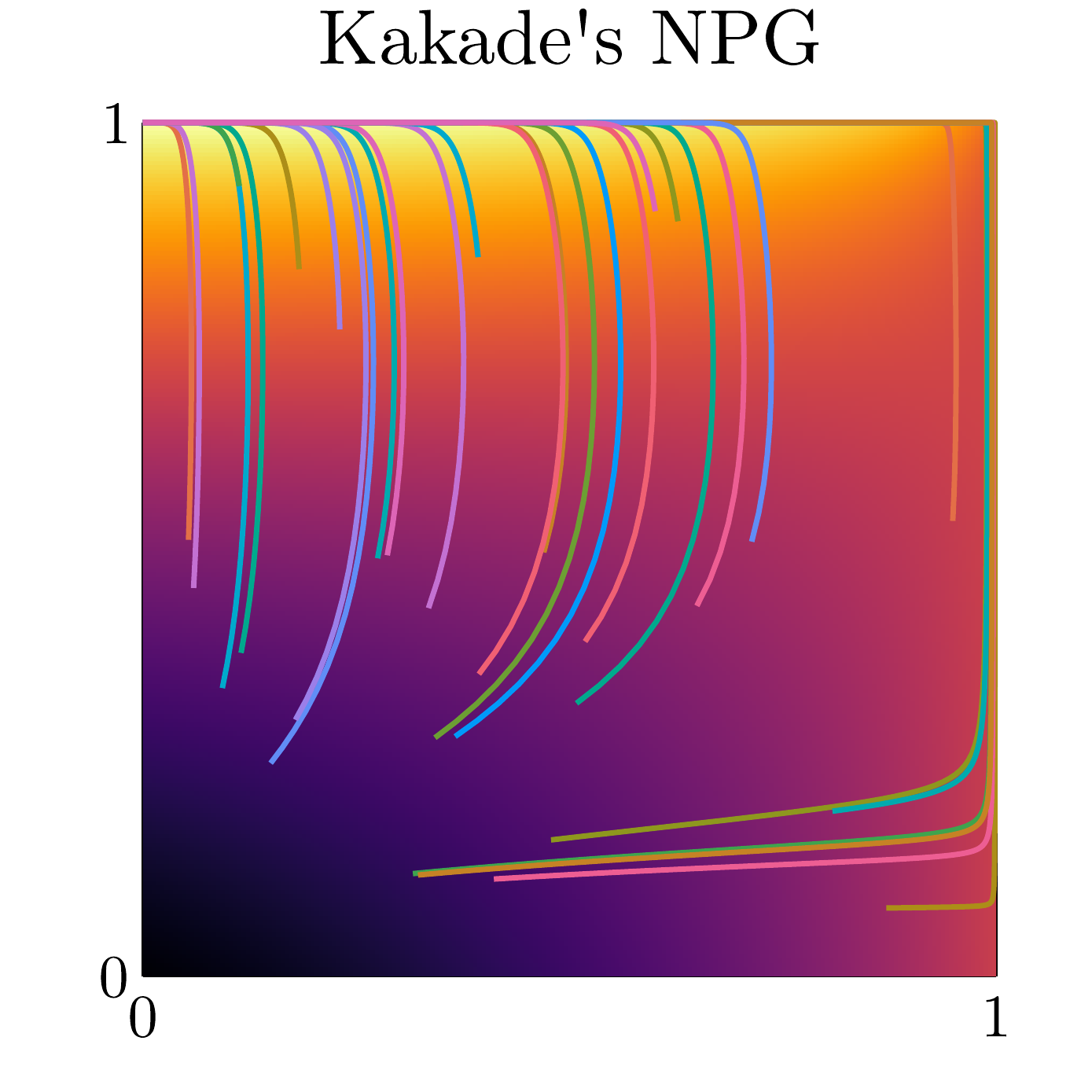}};
\node[inner sep=0pt] (r3) at (6.2,0)
    {\includegraphics[width=3cm]{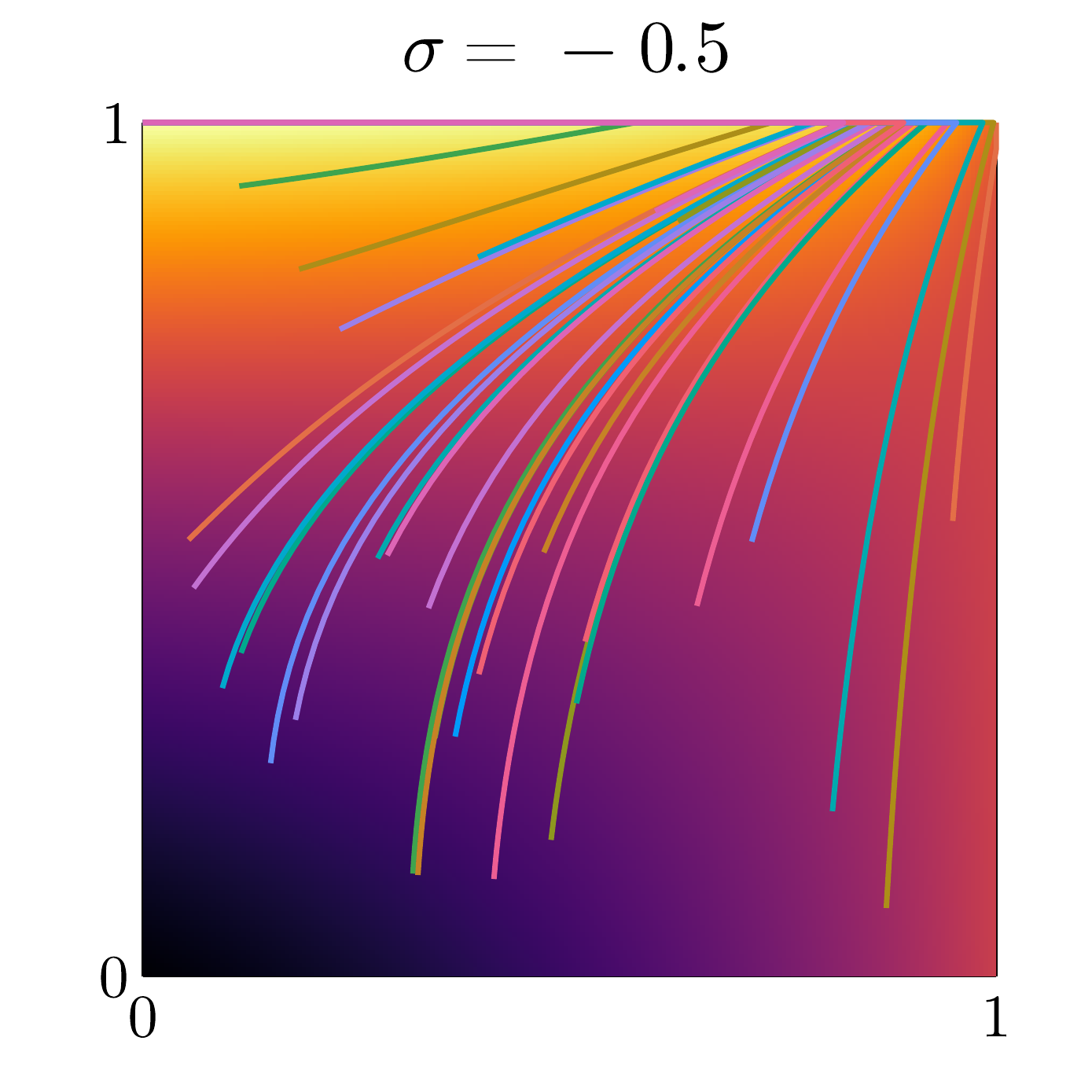}};
\node[inner sep=0pt] (r4) at (9.3,0)
    {\includegraphics[width=3cm]{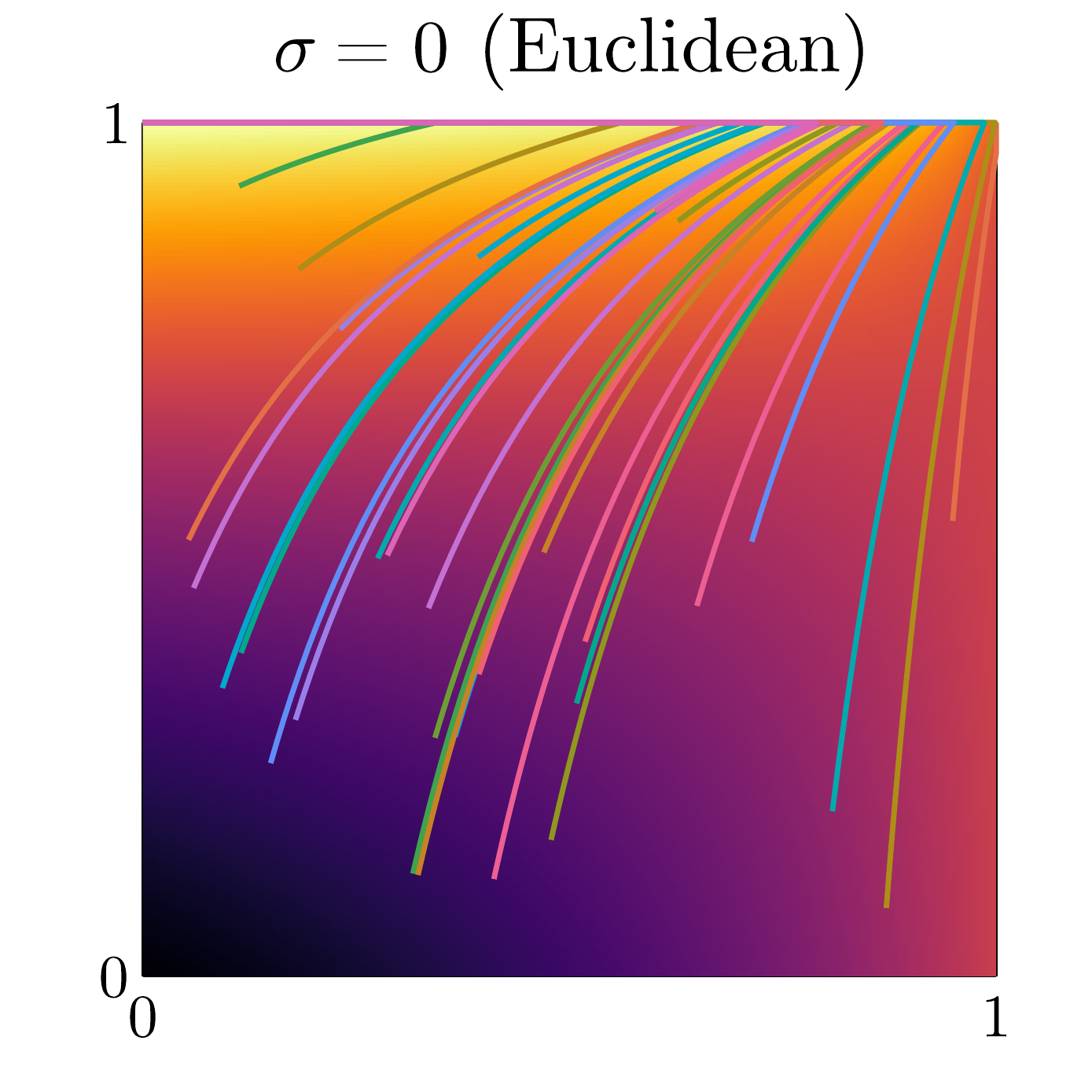}};
\node[inner sep=0pt] (r5) at (12.4,0)
    {\includegraphics[width=3cm]{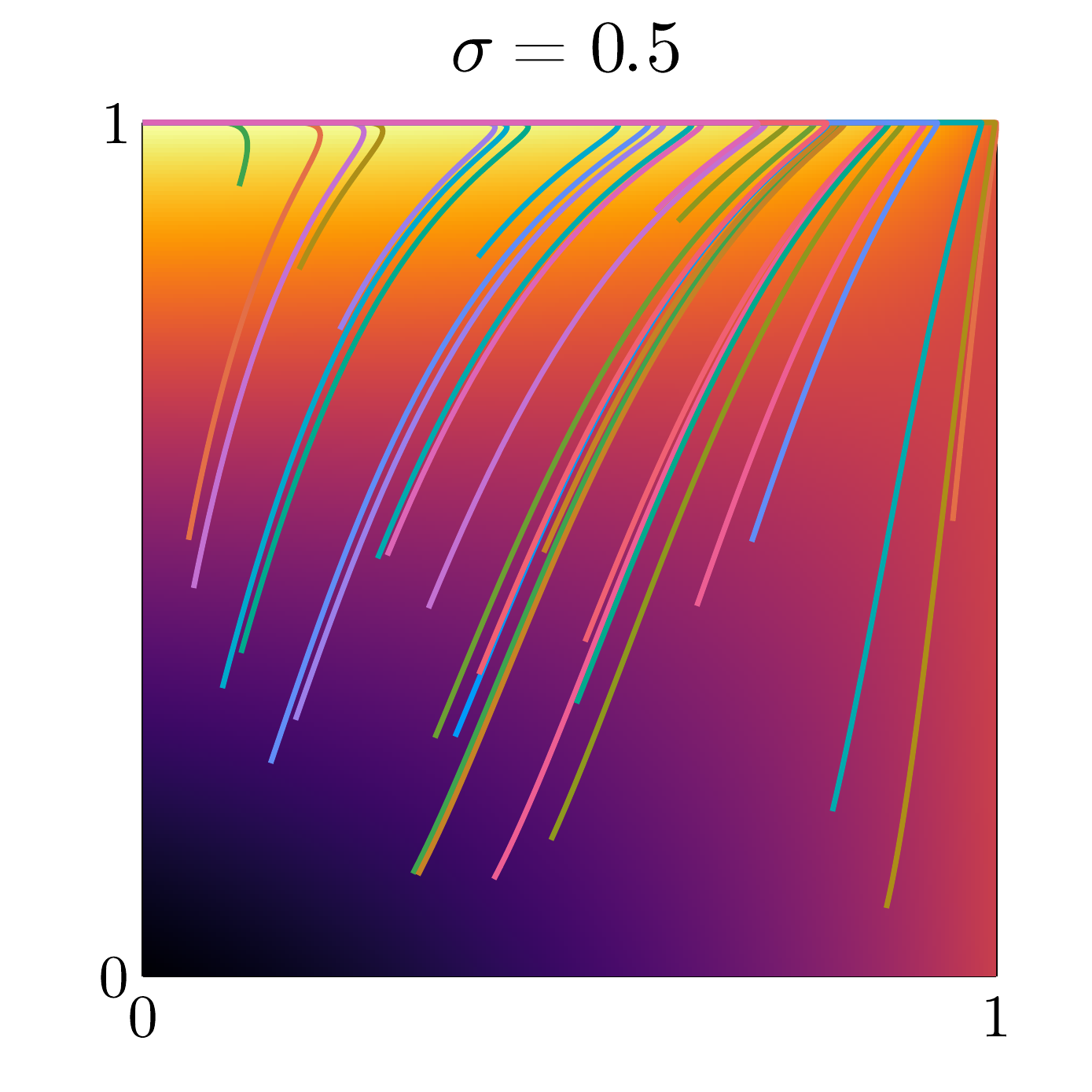}};
\node[inner sep=0pt] (r6) at (0,-3.3)
    {\includegraphics[width=3cm]{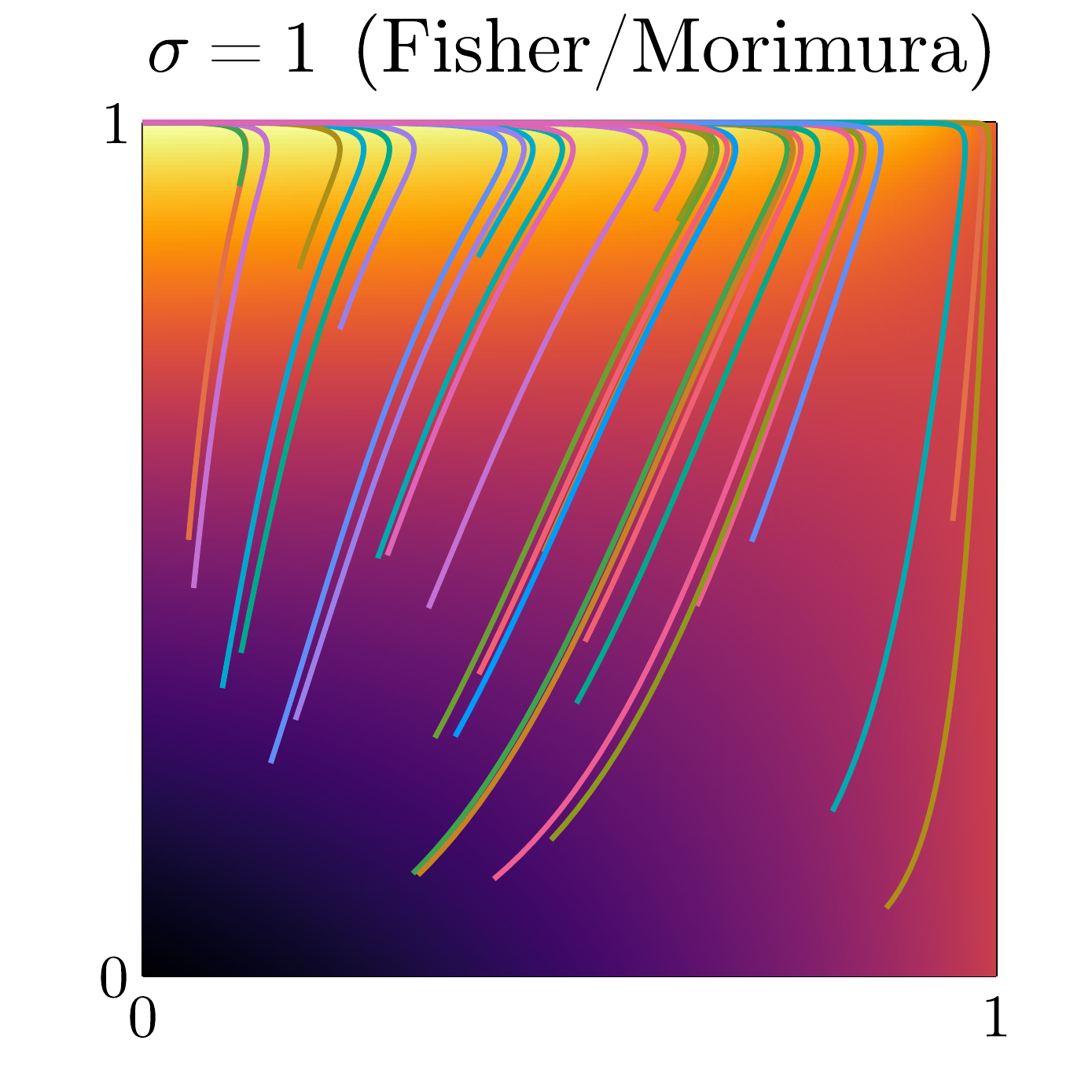}};
\node[inner sep=0pt] (r7) at (3.1,-3.3)
    {\includegraphics[width=3cm]{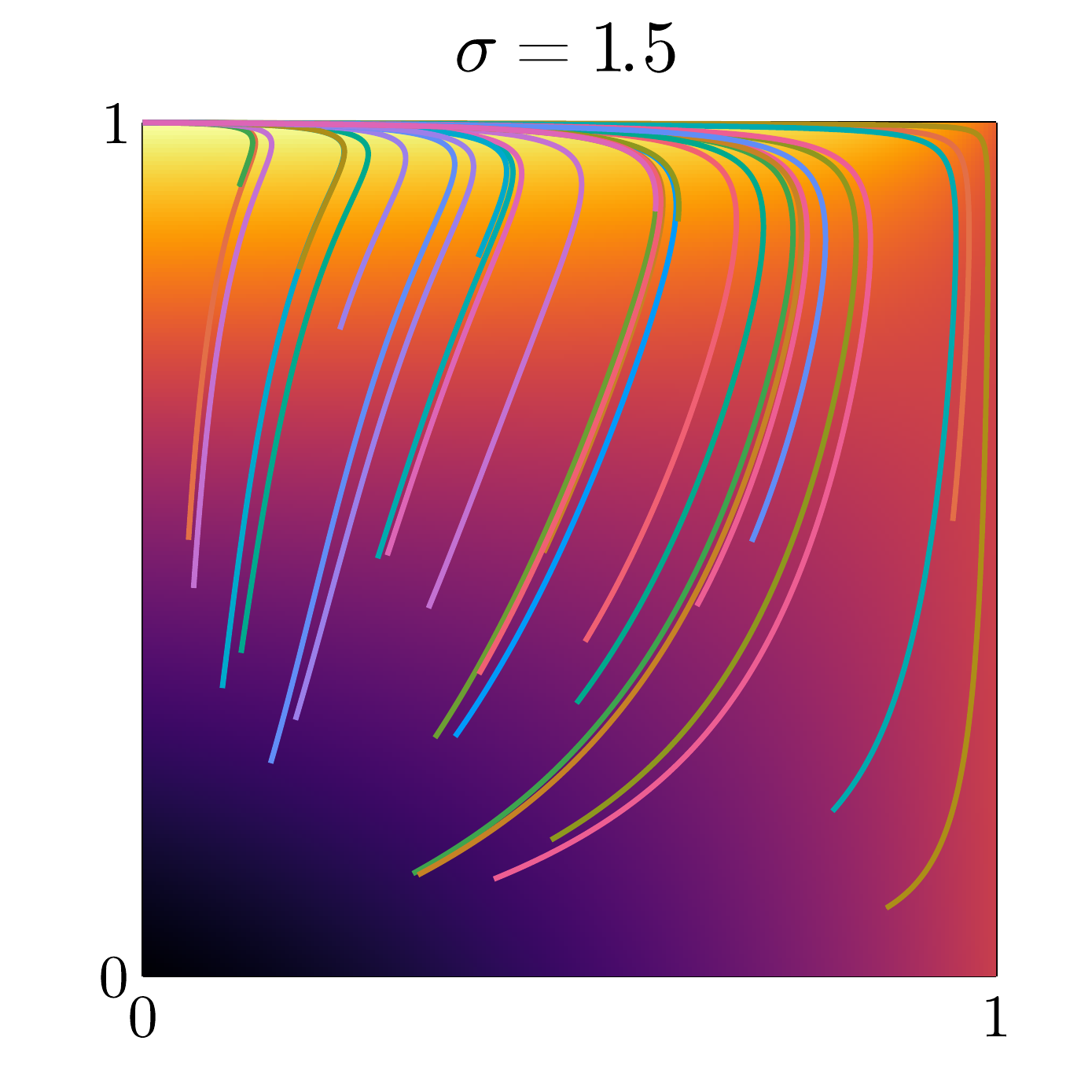}};
\node[inner sep=0pt] (r8) at (6.2,-3.3)
    {\includegraphics[width=3cm]{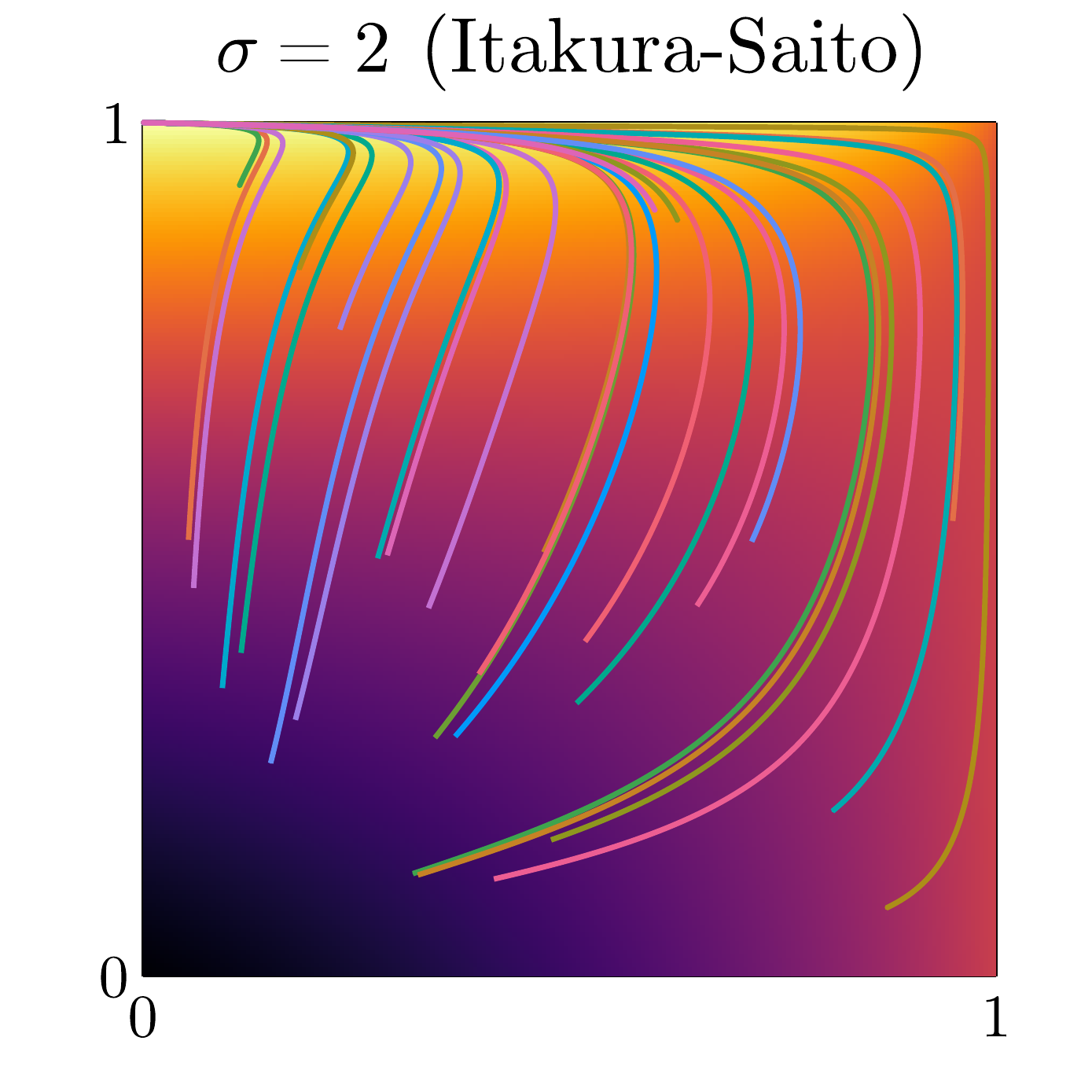}};
\node[inner sep=0pt] (r9) at (9.3,-3.3)
    {\includegraphics[width=3cm]{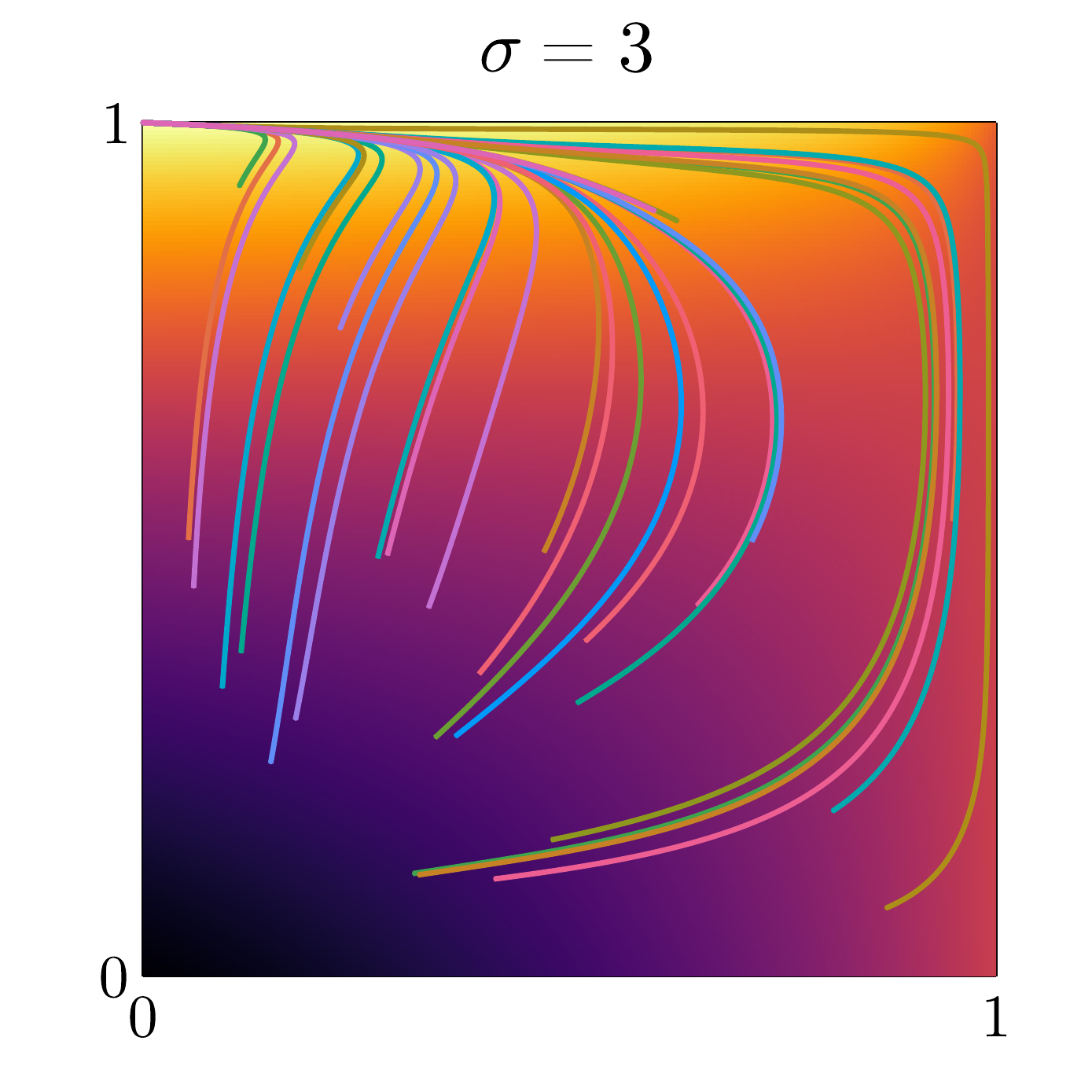}};
\node[inner sep=0pt] (r10) at (12.4,-3.3)
    {\includegraphics[width=3cm]{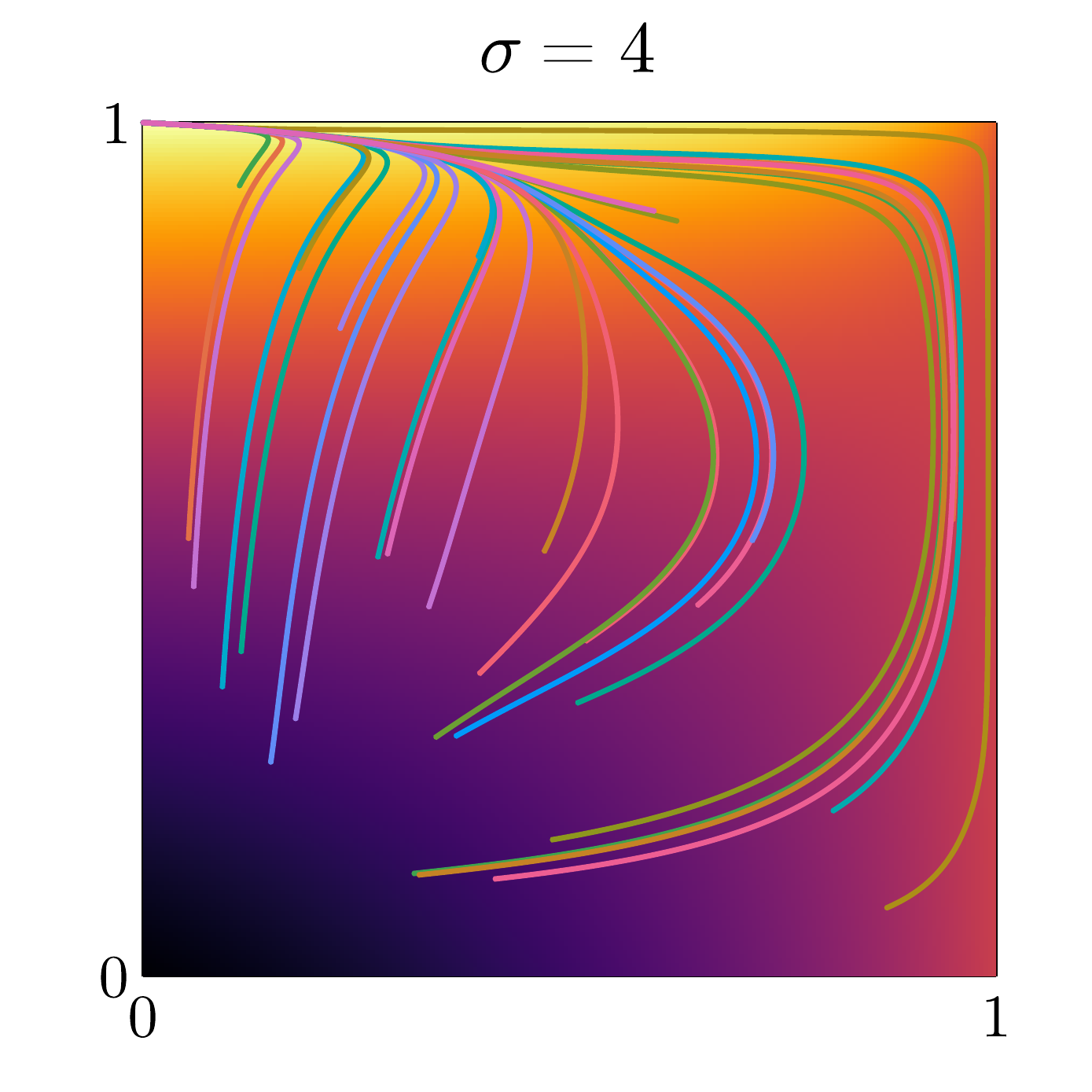}};
\node[inner sep=0pt] (colorbar) at (14.4,-1.7)
    {\includegraphics[height=5.6cm]{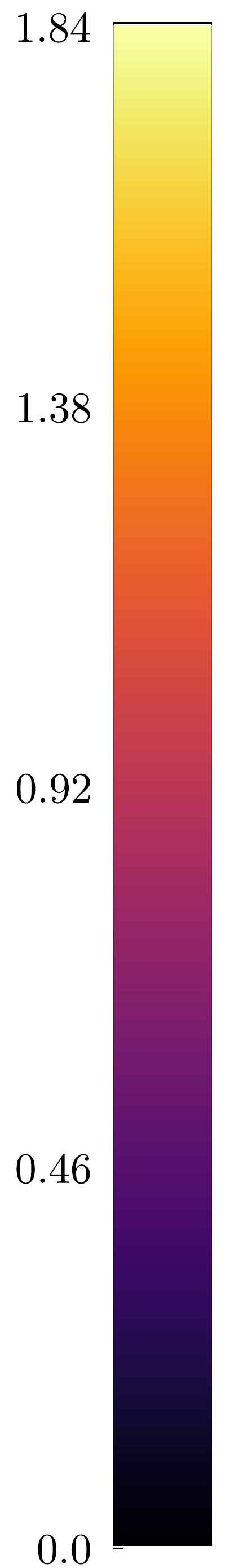}};
\node[inner sep=0pt] (label1) at (-1.5,0) {\rotatebox{90}{\tiny$\pi(a_1|s_2)$}};
\node[inner sep=0pt] (label2) at (-1.5,-3.3) {\rotatebox{90}{\tiny$\pi(a_1|s_2)$}};
\node[inner sep=0pt] (label3) at (0,-4.8) {\tiny$\pi(a_1|s_1)$};
\node[inner sep=0pt] (label3) at (3.1,-4.8) {\tiny$\pi(a_1|s_1)$};
\node[inner sep=0pt] (label3) at (6.2,-4.8) {\tiny$\pi(a_1|s_1)$};
\node[inner sep=0pt] (label3) at (9.3,-4.8) {\tiny$\pi(a_1|s_1)$};
\node[inner sep=0pt] (label3) at (12.4,-4.8) {\tiny$\pi(a_1|s_1)$};
\end{tikzpicture}
}
\caption{Plots of the trajectories of the individual methods inside the policy polytope $\Delta_\AA^\SS\cong[0,1]^2$; additionally, a heatmap of the reward function $\pi\mapsto R(\pi)$ is shown; 
the maximizer $\pi^\ast$ is located at the upper left corner of the policy polytope. 
}
\label{fig:policyTrajectories}
\end{figure}

\begin{figure}[ht]
\centering
\resizebox{1\textwidth}{!}{
\begin{tikzpicture}[]
\node[inner sep=0pt] (r1) at (0,0)
    {\includegraphics[width=3cm]{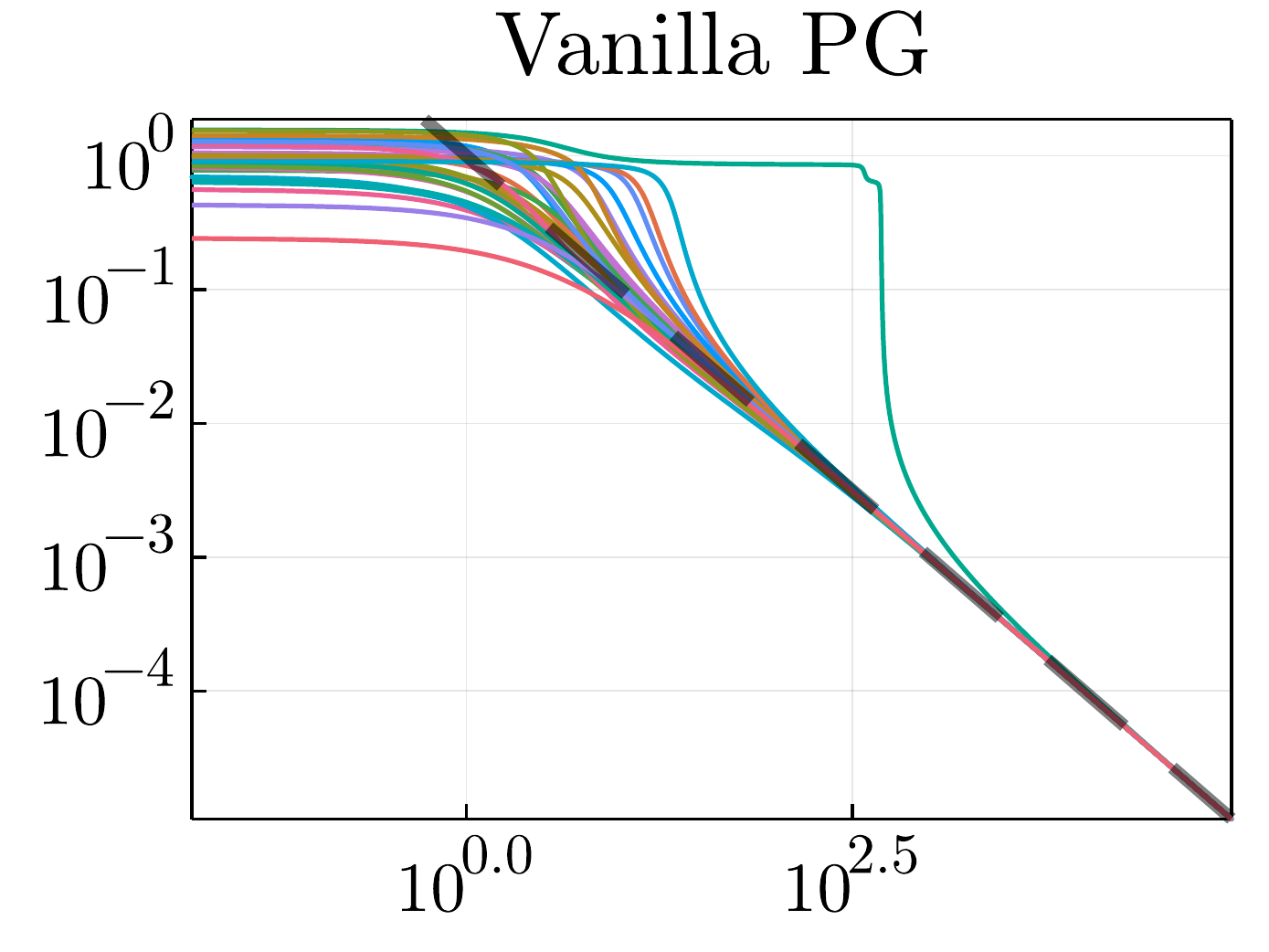}};
\node[inner sep=0pt] (r2) at (3.1,0)
    {\includegraphics[width=3cm]{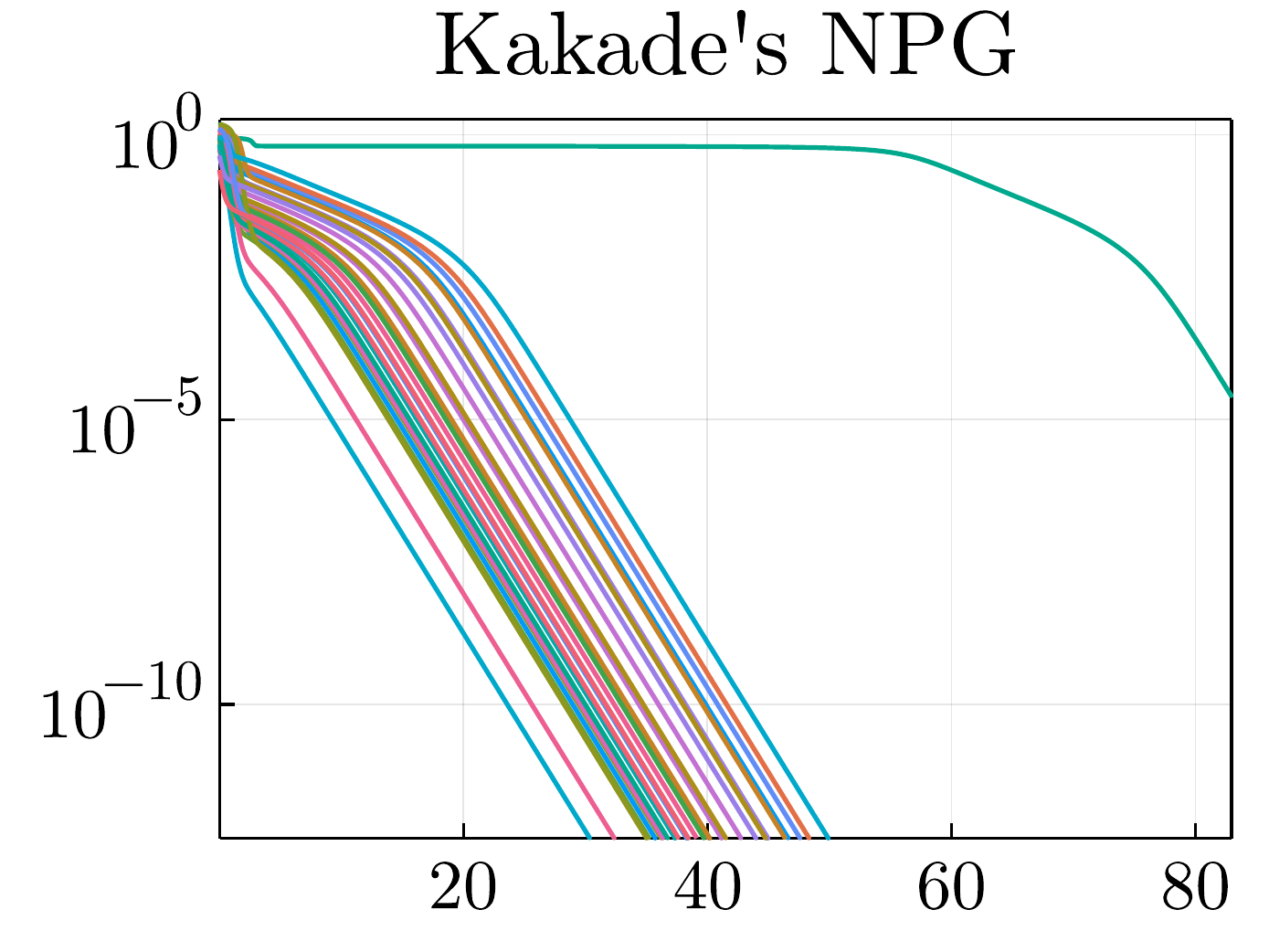}};
\node[inner sep=0pt] (r3) at (6.2,0)
    {\includegraphics[width=3cm]{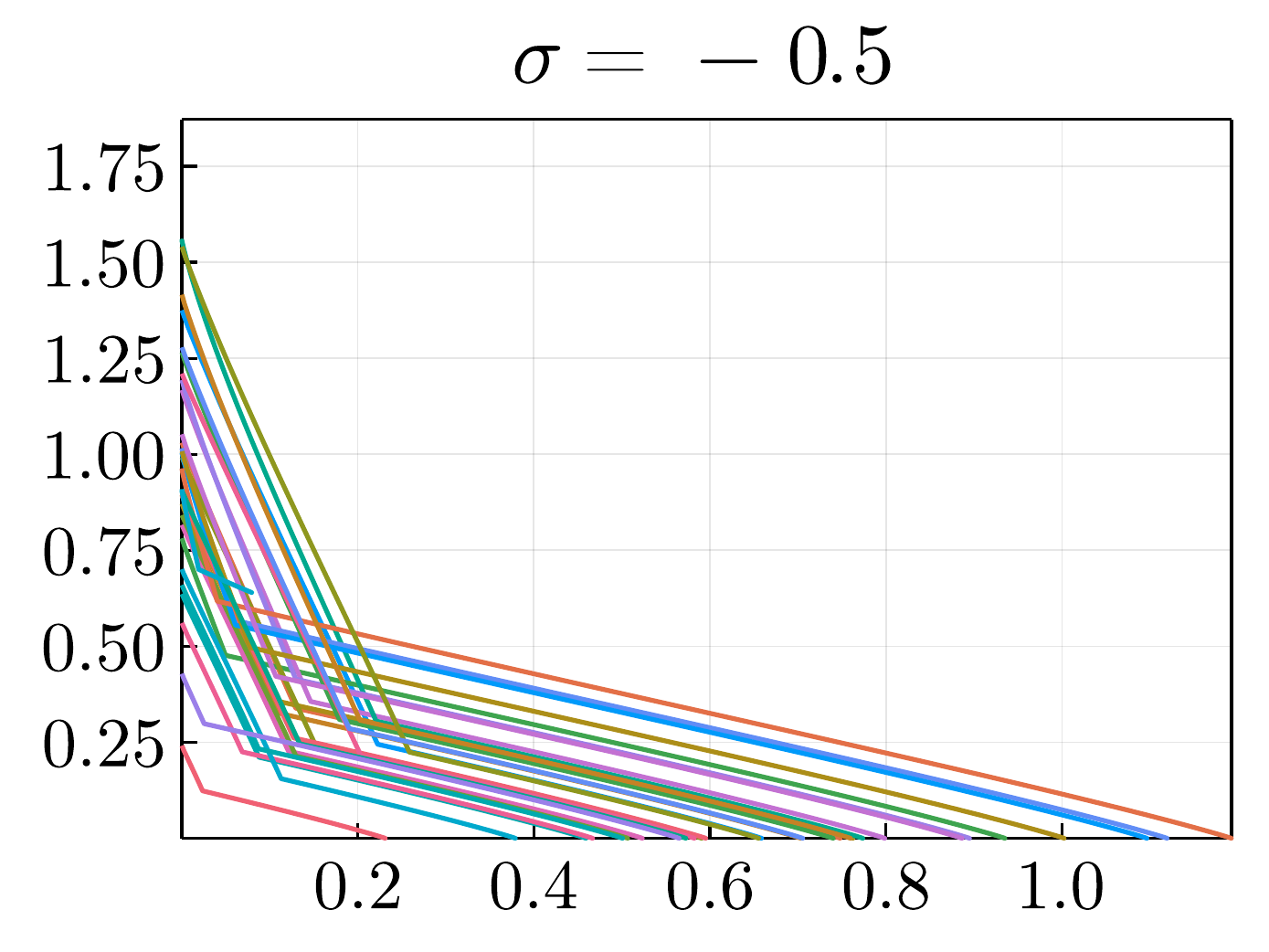}};
\node[inner sep=0pt] (r4) at (9.3,0)
    {\includegraphics[width=3cm]{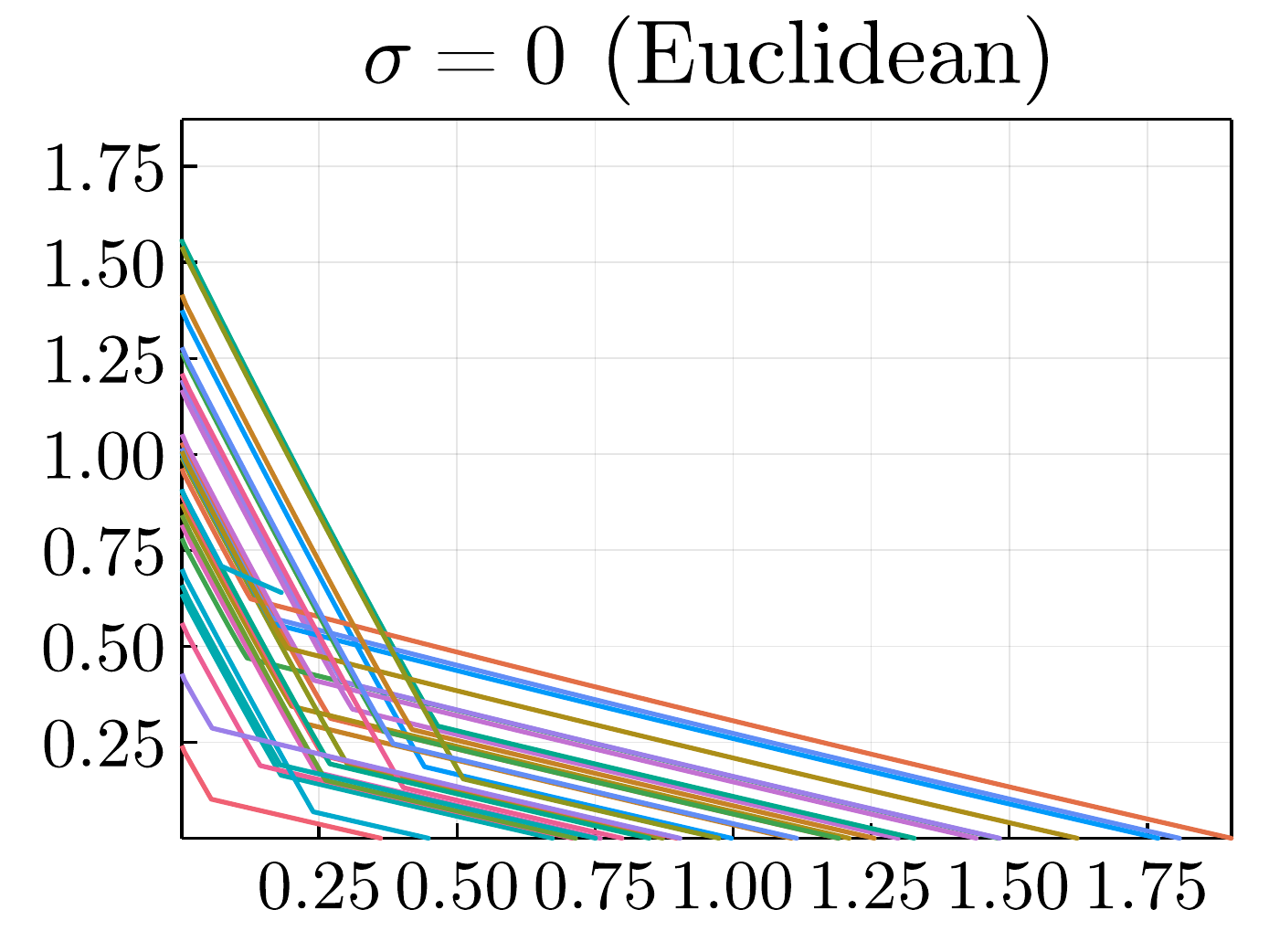}};
\node[inner sep=0pt] (r5) at (12.4,0)
    {\includegraphics[width=3cm]{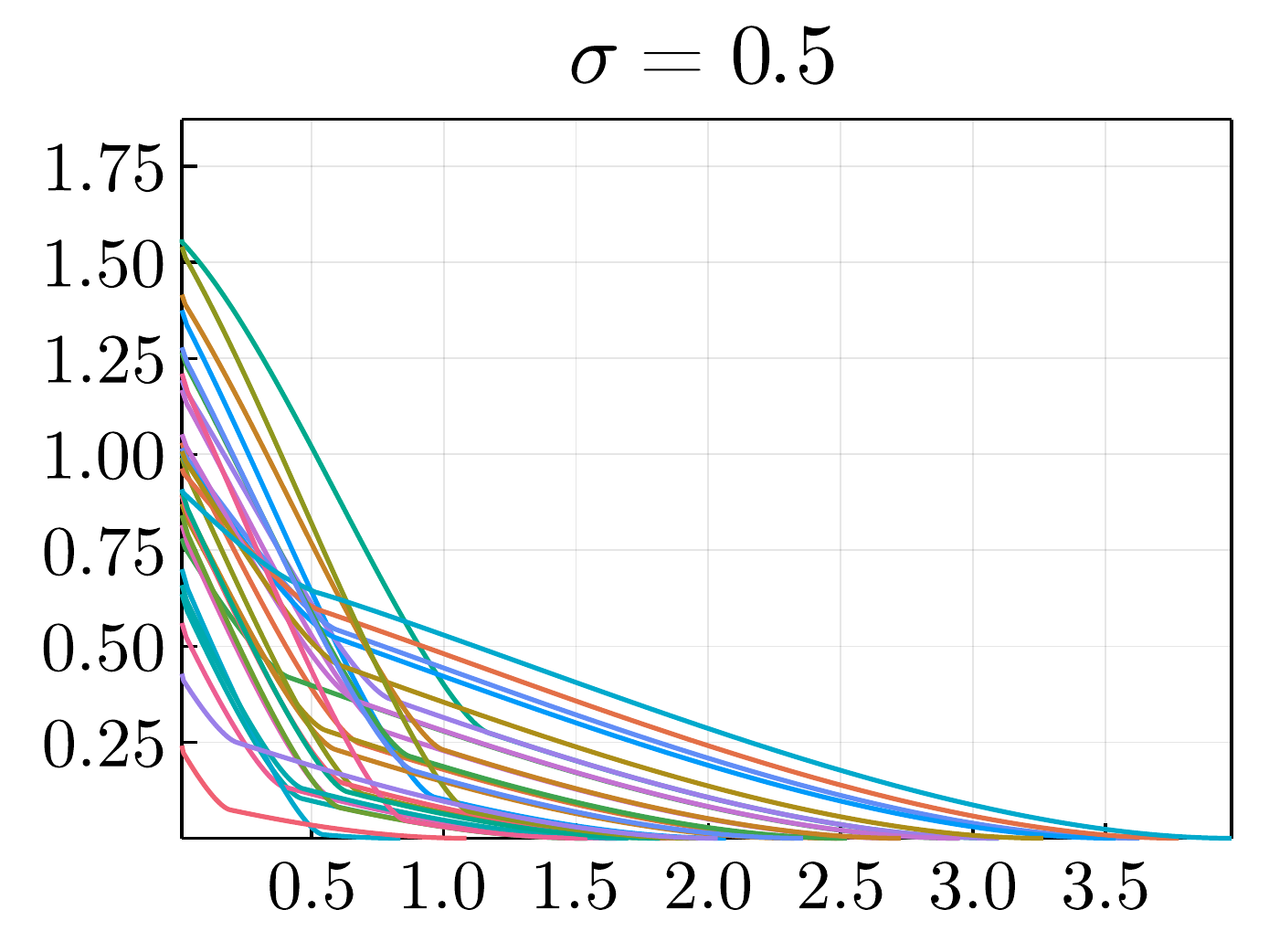}};
\node[inner sep=0pt] (r6) at (0,-2.5)
    {\includegraphics[width=3cm]{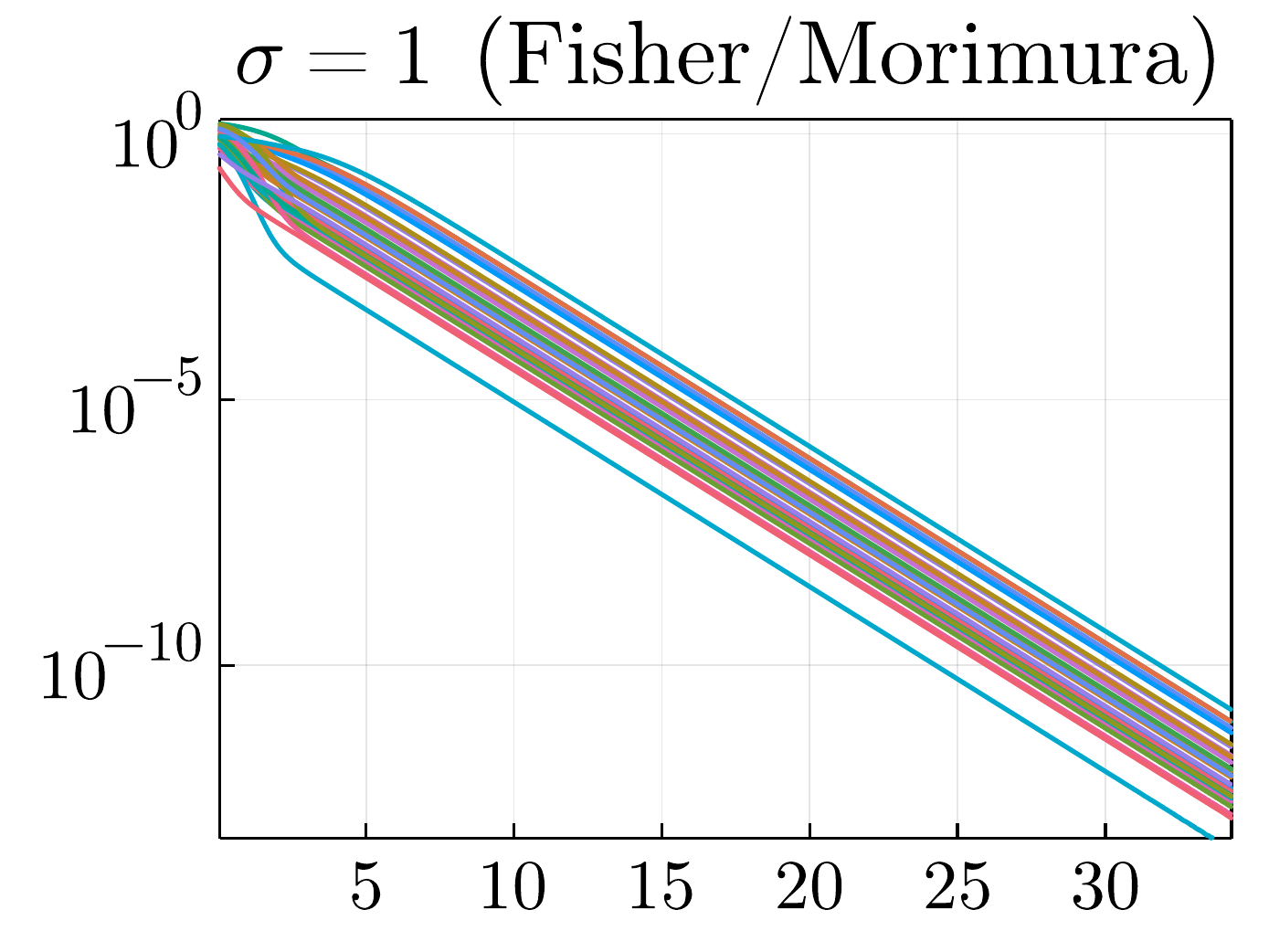}};
\node[inner sep=0pt] (r7) at (3.1,-2.5)
    {\includegraphics[width=3cm]{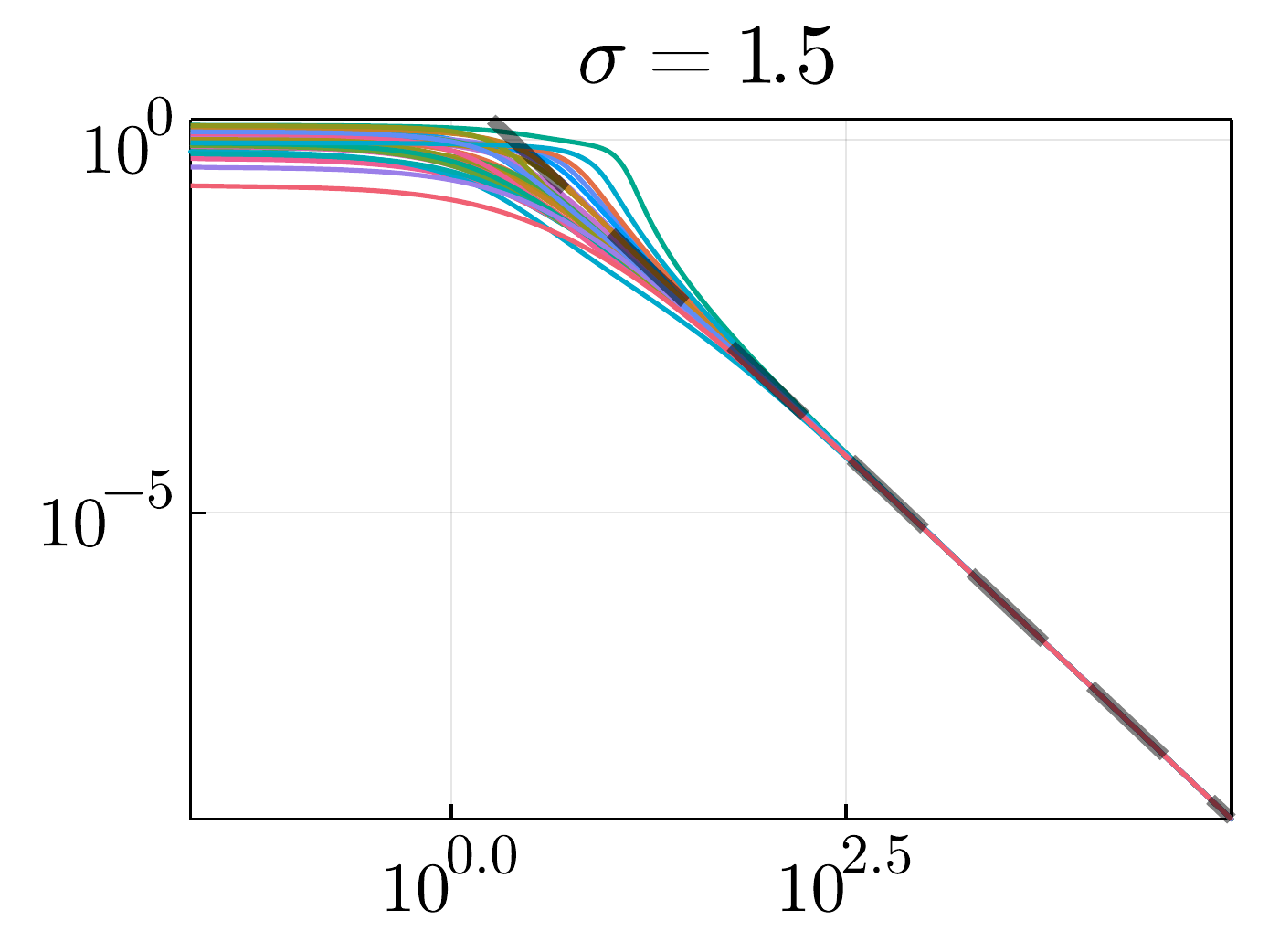}};
\node[inner sep=0pt] (r8) at (6.2,-2.5)
    {\includegraphics[width=3cm]{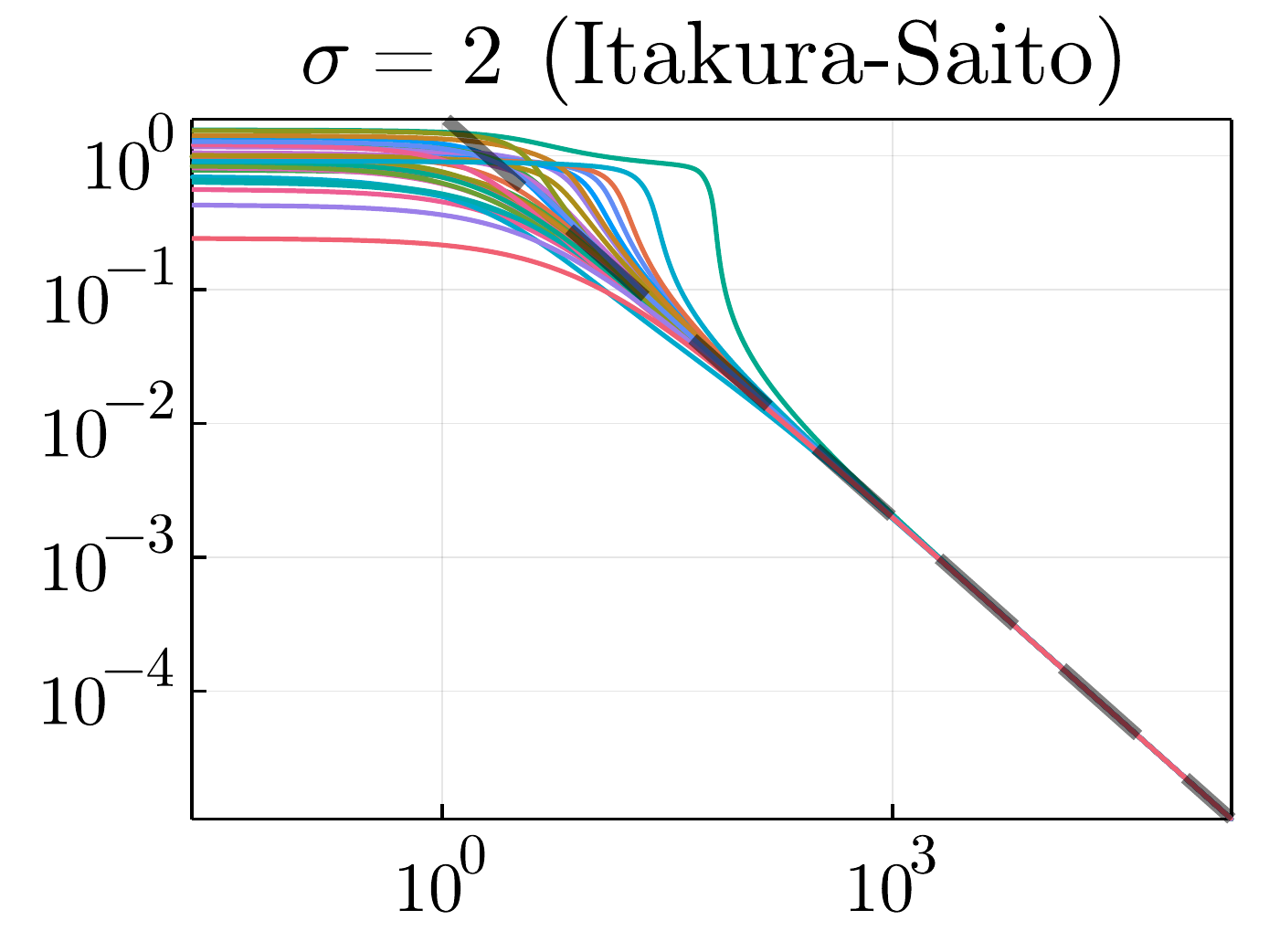}};
\node[inner sep=0pt] (r9) at (9.3,-2.5)
    {\includegraphics[width=3cm]{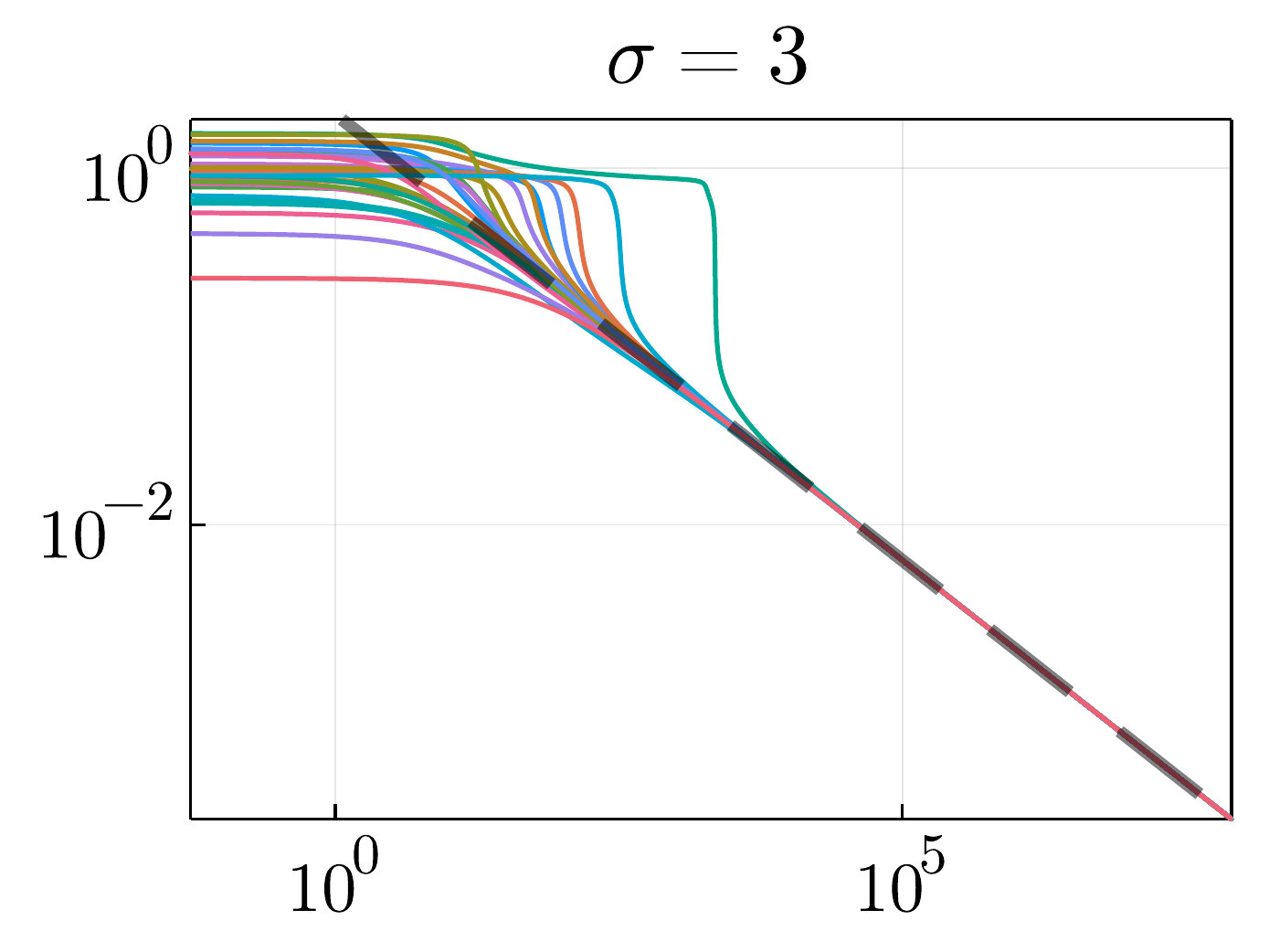}};
\node[inner sep=0pt] (r10) at (12.4,-2.5)
    {\includegraphics[width=3cm]{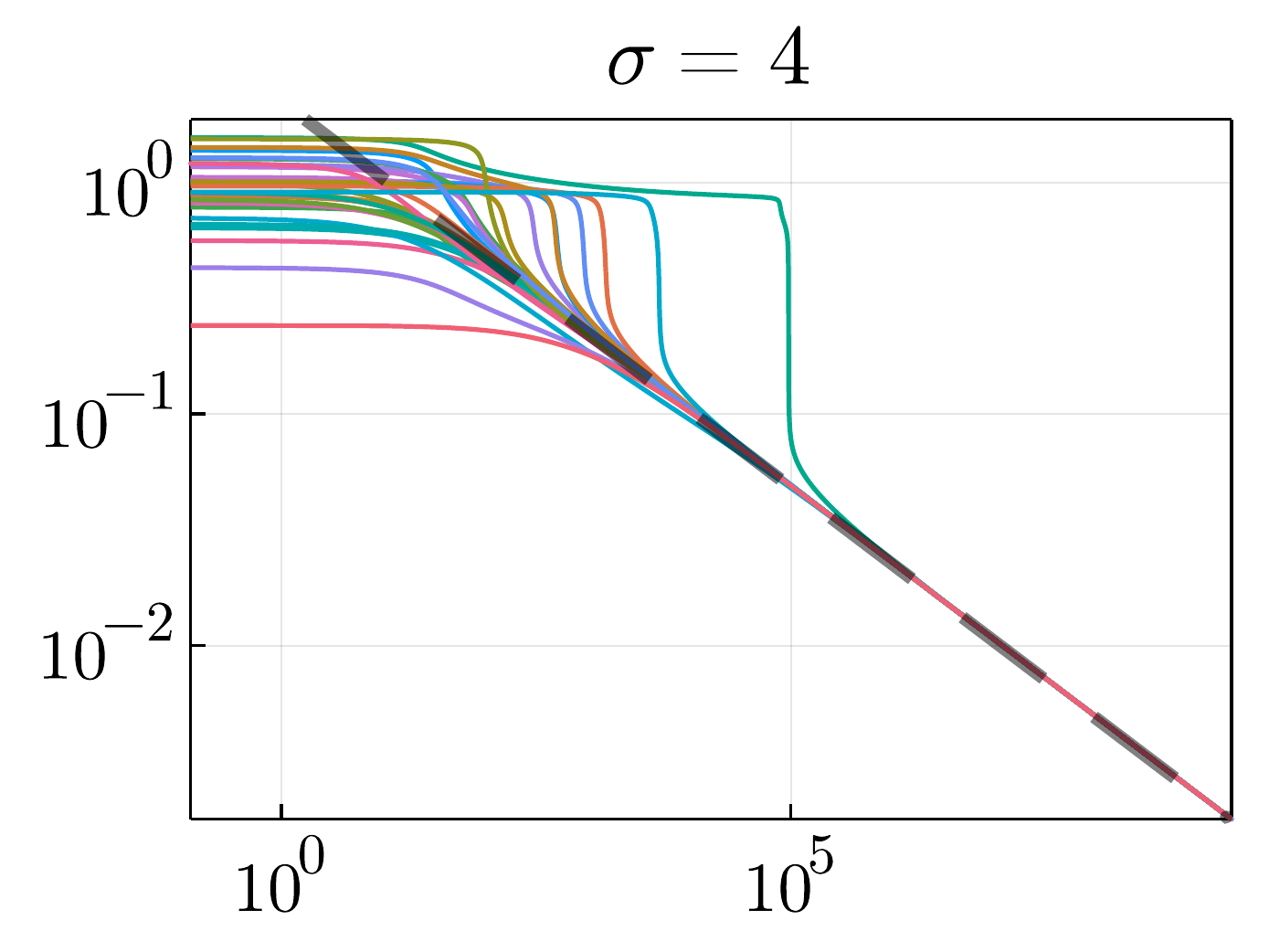}};
\node[inner sep=0pt] (label1) at (-1.5,0) {\rotatebox{90}{\tiny$R^\ast-R(\theta(t))$}};
\node[inner sep=0pt] (label2) at (-1.5,-2.4) {\rotatebox{90}{\tiny$R^\ast-R(\theta(t))$}};
\node[inner sep=0pt] (label3) at (0.2,-3.7) {\tiny$t$};
\node[inner sep=0pt] (label3) at (3.3,-3.7) {\tiny$t$};
\node[inner sep=0pt] (label3) at (6.4,-3.7) {\tiny$t$};
\node[inner sep=0pt] (label3) at (9.5,-3.7) {\tiny$t$};
\node[inner sep=0pt] (label3) at (12.6,-3.7) {\tiny$t$};
\end{tikzpicture}
}
\caption{Plot of the optimality gaps $R^\ast-R(\theta(t))$ during optimization; note that for vanilla PG and $\sigma>1$ these are log-log plots since we expect a decay like $t^{-1}$ and $t^{-1/(\sigma-1)}$ respectively, which are shown as a dashed gray line; Kakade's and Morimura's NPG are at a log plot since we expect a linear convergence; finally, for $\sigma<1$ we observe finite time convergence.}
\label{fig:convergenceRates}
\end{figure}

\subsection{Linear convergence of regularized Hessian natural policy gradient flows}

It is known that strictly convex regularization in state-action space can yield linear convergence in reward optimization for vanilla and Kakade's natural policy gradients~\cite{mei2020global, cen2021fast}. 
Using Lemma~\ref{prop:ratesTrajectories} we generalize the result for Kakade's NPG and provide a result giving the linear convergence for general Hessian NPG. 

\begin{theorem}[Linear convergence for regularized problems]\label{thm:inearConvRegularized}
Consider Setting~\ref{set:MDPconvergence} and let $\phi$ be a Legendre type function and denote the regularized reward by $\mathfrak R_\lambda(\eta) = \langle r, \eta\rangle - \lambda \phi(\eta)$ for some $\lambda>0$ and fix an $\eta_0\in\operatorname{int}(\mathcal N)$ and assume that the global maximizer $\eta^\ast_\lambda$ of $\mathfrak R_\lambda$ over $\mathcal N$ lies in the interior $\operatorname{int}(\mathcal N)$. 
Assume that $\eta\colon[0, \infty)\to\operatorname{int}(\mathcal N)$ solves the natural policy gradient flow with respect to the regularized reward $\mathfrak R_\lambda$ and the Hessian geometry induced by $\phi$. For any $c\in(0, \lambda)$ there exists a constant $K>0$ such that $D_\phi(\eta^\ast_\lambda, \eta(t)) \le K e^{-c t}$. In particular, for any $\kappa\in(\kappa_c, \infty)$ this implies $R^\ast_\lambda-\mathfrak R_\lambda(\eta(t))\le \kappa\lambda K e^{-c t}$, where $\kappa_c$ denotes the condition number of $\nabla^2\phi(\eta^\ast)$.
\end{theorem}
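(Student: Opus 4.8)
The plan is to deduce everything from Lemma~\ref{prop:ratesTrajectories} applied with exponent $\tau = 1$, the only real work being the verification of the linear lower bound~\eqref{eq:lowerBound}. First I would check that Setting~\ref{set:MDPconvergence} is in force for the regularized objective: since $\phi$ is a Legendre type function its Hessian is positive definite on $T\mathcal L$ over $\operatorname{int}(\mathcal N)$, so $\phi$ is convex there and $\mathfrak R_\lambda = \langle r,\cdot\rangle - \lambda\phi$ is concave and differentiable on $\operatorname{int}(\mathcal N)$; moreover $\eta^\ast_\lambda\in\operatorname{int}(\mathcal N)$ gives $\phi(\eta^\ast_\lambda)<\infty$, and a global solution $\eta$ is assumed. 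Thus all hypotheses of Lemma~\ref{prop:ratesTrajectories} except~\eqref{eq:lowerBound} are immediate.

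The key step is an exact identity for the suboptimality gap. Because $\eta^\ast_\lambda$ is an \emph{interior} maximizer of $\mathfrak R_\lambda$ over $\mathcal N = \mathcal L\cap\mathbb R_{\ge0}^{\SS\times\AA}$, the only active constraint is the affine one, so the first-order condition reads $r - \lambda\nabla\phi(\eta^\ast_\lambda)\perp T\mathcal L$. Since any $\eta\in\mathcal N$ satisfies $\eta-\eta^\ast_\lambda\in T\mathcal L$, this yields $\langle r,\eta^\ast_\lambda-\eta\rangle = \lambda\langle\nabla\phi(\eta^\ast_\lambda),\eta^\ast_\lambda-\eta\rangle$. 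Substituting into $R^\ast_\lambda - \mathfrak R_\lambda(\eta) = \langle r,\eta^\ast_\lambda-\eta\rangle - \lambda(\phi(\eta^\ast_\lambda)-\phi(\eta))$ and regrouping gives the clean identity
\begin{equation*}
    R^\ast_\lambda - \mathfrak R_\lambda(\eta) = \lambda\,D_\phi(\eta,\eta^\ast_\lambda) \qquad \text{for all } \eta\in\mathcal N,
\end{equation*}
i.e.\ the regularized gap is exactly $\lambda$ times the Bregman divergence to $\eta^\ast_\lambda$.

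To feed this into Lemma~\ref{prop:ratesTrajectories} I must pass from $D_\phi(\eta,\eta^\ast_\lambda)$ to the opposite ordering $D_\phi(\eta^\ast_\lambda,\eta)$ that appears in~\eqref{eq:lowerBound}. By the local-Hessian property~\eqref{eq:hessianBregman} both orderings have the same second-order Taylor expansion $\tfrac12(\eta-\eta^\ast_\lambda)^\top\nabla^2\phi(\eta^\ast_\lambda)(\eta-\eta^\ast_\lambda)$ at $\eta^\ast_\lambda$, so their ratio tends to $1$ as $\eta\to\eta^\ast_\lambda$. Hence for any fixed $c<\lambda$ there is a neighborhood of $\eta^\ast_\lambda$ on which $\lambda D_\phi(\eta,\eta^\ast_\lambda)\ge c\,D_\phi(\eta^\ast_\lambda,\eta)$; together with the identity this is exactly~\eqref{eq:lowerBound} with $\tau=1$ and $\omega=c$. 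Lemma~\ref{prop:ratesTrajectories} then delivers $D_\phi(\eta^\ast_\lambda,\eta(t))\le Ke^{-ct}$. The trajectory enters this neighborhood after finite time, since $\mathfrak R_\lambda(\eta(t))\to R^\ast_\lambda$ and hence $\eta(t)\to\eta^\ast_\lambda$ by Lemma~\ref{prop:convergenceGeneral} and compactness of $\mathcal N$.

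For the objective statement I would use the identity once more, $R^\ast_\lambda-\mathfrak R_\lambda(\eta(t)) = \lambda D_\phi(\eta(t),\eta^\ast_\lambda)$, and bound the reversed divergence against the controlled one. Writing both divergences as quadratic Taylor remainders, upper-bounding $D_\phi(\eta(t),\eta^\ast_\lambda)$ by the largest eigenvalue of $\nabla^2\phi$ near $\eta^\ast_\lambda$ and lower-bounding $D_\phi(\eta^\ast_\lambda,\eta(t))$ by the smallest, produces the factor $\kappa_c$; thus for any $\kappa>\kappa_c$ one has $D_\phi(\eta(t),\eta^\ast_\lambda)\le\kappa D_\phi(\eta^\ast_\lambda,\eta(t))$ on a small enough neighborhood, giving $R^\ast_\lambda-\mathfrak R_\lambda(\eta(t))\le\kappa\lambda K e^{-ct}$. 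The main obstacle throughout is precisely this asymmetry of the Bregman divergence: the natural identity and the lemma's hypothesis live on opposite orderings, and controlling their quotient---harmless to first order but responsible for the condition-number constant $\kappa_c$ in the final estimate---is the one place where genuine care (Taylor remainders together with eigenvalue bounds on $\nabla^2\phi$ restricted to $T\mathcal L$) is needed.
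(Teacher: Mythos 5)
Your proof is correct and takes essentially the same route as the paper: both arguments reduce the claim to Lemma~\ref{prop:ratesTrajectories} with $\tau=1$ by coupling the regularized suboptimality gap to the Bregman divergence $D_\phi(\eta^\ast_\lambda,\cdot)$, and both recover the condition-number factor $\kappa_c$ from eigenvalue bounds on $\nabla^2\phi(\eta^\ast_\lambda)$ when converting back to the objective gap. The only organizational difference is that where the paper obtains the coupling~\eqref{eq:coupling} by applying Lemma~\ref{lem:Bregman} to $-\mathfrak R_\lambda$ (via the linear invariance $D_{-\mathfrak R_\lambda}=\lambda D_\phi$), you derive the exact identity $R^\ast_\lambda-\mathfrak R_\lambda(\eta)=\lambda D_\phi(\eta,\eta^\ast_\lambda)$ from the first-order condition $r-\lambda\nabla\phi(\eta^\ast_\lambda)\perp T\mathcal L$ and then compare the two orderings of $D_\phi$ by Taylor expansion --- which is precisely the computation inside the proof of Lemma~\ref{lem:Bregman}, here made explicit for the affine-constrained maximizer.
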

\begin{proof}
We first recall that by Lemma~\ref{prop:convergenceGeneral} it holds that $\mathfrak R(\eta(t))\to\mathfrak R(\eta^\ast)$ and the uniqueness of the maximizer $\eta(t)\to \eta^\ast\in\operatorname{int}(\mathcal N)$. 
By Lemma~\ref{prop:ratesTrajectories} it suffices to show that for any $\omega\in(0,1)$ it holds $\mathfrak R_\lambda(\eta^\ast) - \mathfrak R_\lambda(\eta) \ge \omega D_\phi(\eta^\ast,\eta)$ if $\eta$ in a neighborhood of $\eta^\ast$. 
Note that 
    \[ D_\phi(\eta^\ast, \eta) = \lambda^{-1} D_{\lambda \phi}(\eta^\ast, \eta) = D_{-\mathfrak R_\lambda}(\eta^\ast, \eta). \]
By Lemma~\ref{lem:Bregman} it follows that 
\begin{equation}\label{eq:coupling}
    \mathfrak R_\lambda(\eta^\ast) - \mathfrak R_\lambda(\eta) \ge \omega D_{-\mathfrak R_\lambda}(\eta^\ast,\eta) = \lambda \omega D_{\phi}(\eta^\ast,\eta),
\end{equation}
which shows the linear convergence of the trajectory in the Bregman divergence. 
For arbitrary $m, M>0$ such that $mI \prec \nabla^2\phi(\eta^\ast) \prec MI$ we can estimate
\begin{align*}
    R_\lambda^\ast - \mathfrak R_\lambda(\eta(t)) & = \mathfrak R_\lambda(\eta^\ast) - \mathfrak R_\lambda(\eta(t)) \le \frac{\lambda M}{2} \cdot \lVert \eta^\ast - \eta(t) \rVert^2 \le \frac{\lambda M}{m} \cdot D_\phi(\eta^\ast, \eta),
\end{align*}
for $\eta(t)$ close to $\eta^\ast$, where we used that $\phi$ is $m$ strongly convex in a neighborhood of $\eta^\ast$. 
\end{proof}

In the proof of the previous theorem we used the following lemma. 
\begin{lemma}\label{lem:Bregman}
Let $\phi$ be a strictly convex function defined on an open convex set $\Omega\subseteq\mathbb R^d$ with unique minimizer $\eta^\ast$. Then for any $\omega\in(0,1)$ there is a neighborhood $N_\omega$ of $x^\ast$ such that 
    \[ \phi(x) - \phi(x^\ast) \ge \omega D_\phi(x^\ast, x) \quad \text{for all } x\in N_\omega. \]
\end{lemma}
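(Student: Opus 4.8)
The plan is to compare the two nonnegative quantities $A(x) := \phi(x) - \phi(x^\ast)$ and $B(x) := D_\phi(x^\ast, x)$ through their local behavior near $x^\ast$ and to show that their ratio tends to $1$, so that the inequality $A \ge \omega B$ with any $\omega < 1$ holds on a sufficiently small neighborhood $N_\omega$. Since $x^\ast$ is the minimizer of the differentiable convex function $\phi$ over the \emph{open} set $\Omega$, it lies in the interior and hence $\nabla\phi(x^\ast) = 0$; moreover both $A$ and $B$ vanish at $x^\ast$, so the comparison is really a statement about matching leading-order terms.

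First I would record the elementary identity
\begin{equation*}
    A(x) + B(x) = \langle \nabla\phi(x),\, x - x^\ast\rangle,
\end{equation*}
which follows immediately from $D_\phi(x^\ast, x) = \phi(x^\ast) - \phi(x) - \langle\nabla\phi(x), x^\ast - x\rangle$ after cancellation of the $\pm\phi$ terms. Next I would Taylor-expand around $x^\ast$. Writing $q(x) := (x-x^\ast)^\top \nabla^2\phi(x^\ast)(x-x^\ast)$ and using $\nabla\phi(x^\ast) = 0$, one obtains $\langle\nabla\phi(x), x - x^\ast\rangle = q(x) + o(\lVert x - x^\ast\rVert^2)$ and $A(x) = \tfrac{1}{2} q(x) + o(\lVert x - x^\ast\rVert^2)$, so the identity above forces $B(x) = \tfrac{1}{2} q(x) + o(\lVert x - x^\ast\rVert^2)$ as well. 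Thus $A$ and $B$ share the identical quadratic leading term, whence $A(x)/B(x) \to 1$ as $x\to x^\ast$; choosing $N_\omega$ to be a neighborhood on which this ratio exceeds $\omega$ then yields the claim.

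The main obstacle is the nondegeneracy required to make the ratio converge to $1$: the argument needs $\nabla^2\phi(x^\ast)$ to be positive definite so that $q(x) \ge \lambda_{\min}\lVert x - x^\ast\rVert^2 > 0$ for $x\ne x^\ast$ and the $o(\lVert x - x^\ast\rVert^2)$ remainders are genuinely negligible against $q(x)$. Mere strict convexity does not suffice — for $\phi(x) = x^4$ one computes $A(x) = x^4$ and $B(x) = 3x^4$, so the ratio equals $1/3$ and the statement fails for every $\omega > 1/3$. In the setting where the lemma is applied, namely Theorem~\ref{thm:inearConvRegularized} with $\phi$ replaced by $-\mathfrak R_\lambda = \lambda\phi - \langle r, \cdot\rangle$, the Hessian $\lambda\nabla^2\phi(\eta^\ast_\lambda)$ is positive definite on $T\mathcal L$ by Setting~\ref{set:MDPconvergence}, so the needed nondegeneracy is available and the proof goes through; accordingly it suffices to state and prove the lemma for a twice continuously differentiable $\phi$ whose Hessian at the minimizer is positive definite.
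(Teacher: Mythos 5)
Your argument is correct under the strengthened hypotheses you state at the end ($\phi$ twice continuously differentiable near $x^\ast$ with $\nabla^2\phi(x^\ast)$ positive definite), and it is in essence the same second-order argument as the paper's: both proofs show that $\phi(x)-\phi(x^\ast)$ and $D_\phi(x^\ast,x)$ share the identical quadratic leading term at $x^\ast$ and then use positive definiteness of the Hessian to make the $o(\lVert x-x^\ast\rVert^2)$ remainders harmless. The routes differ only in bookkeeping: the paper compares $f(x)=D_\phi(x^\ast,x)$ with $g(x)=D_\phi(x,x^\ast)=\phi(x)-\phi(x^\ast)$ through the identity \eqref{eq:hessianBregman} for Hessians of Bregman divergences, and then converts the additive error into a multiplicative factor via local $m$-strong convexity with $m<\lambda_{\min}(\nabla^2\phi(x^\ast))$; you instead use the identity $A(x)+B(x)=\langle\nabla\phi(x),x-x^\ast\rangle$ together with a ratio argument. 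These two mechanisms are interchangeable and of comparable length.

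The substantive point is the one in your final paragraph, and you are right about it: the lemma as stated is false. Strict convexity alone does not suffice, and your counterexample $\phi(x)=x^4$, for which $A(x)/B(x)\equiv 1/3$, rules out every $\omega>1/3$ on every neighborhood of the minimizer. The paper's own proof silently uses exactly the two hypotheses you identify: twice differentiability is needed to invoke \eqref{eq:hessianBregman} and Taylor's theorem, and $\lambda_{\min}(\nabla^2\phi(x^\ast))>0$ is needed for the local strong convexity step; neither follows from strict convexity. So your proposal is not merely an alternative proof but the corrected form of the lemma, and your closing observation that the application in Theorem~\ref{thm:inearConvRegularized} supplies the missing nondegeneracy (positive definiteness of $\lambda\nabla^2\phi(\eta^\ast_\lambda)$ on $T\mathcal L$ from Setting~\ref{set:MDPconvergence}, after restricting to the affine hull of $\mathcal N$) is the right way to see that the downstream results are unaffected.
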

\begin{proof}
Set $f(x)\coloneqq D_\phi(x^\ast, x)$ and $g(x)\coloneqq D_\phi(x, x^\ast)$. It holds that $f(x^\ast) = g(x^\ast) = 0$ and since both functions are non-negative $\nabla f(x^\ast) = \nabla g(x^\ast) = 0$, which implies $g(x) = \phi(x) - \phi(x^\ast)$. By~\eqref{eq:hessianBregman} we have $\nabla^2 f(x^\ast) = \nabla^2 g(x^\ast) = \nabla^2\phi(x^\ast)$ and Taylor extension yields
\begin{align*}
    f(x) & = (x-x^\ast)^\top\nabla^2 \phi(x^\ast)(x-x^\ast) + o(\lVert x - x^\ast \rVert^2) \\
    & = g(x) + o(\lVert x - x^\ast\rVert^2) \\
    & = \phi(x) - \phi(x^\ast) + o(\lVert x - x^\ast \rVert^2).
\end{align*}
Hence, for any $\varepsilon>0$ there is $\delta>0$ such that for $x\in B_\delta(x^\ast)$ it holds that 
\begin{align*}
    f(x) & \le \phi(x) - \phi(x^\ast) + \varepsilon\lVert x - x^\ast \rVert^2 \le \left(1 + \frac{2\varepsilon}{m}\right)(\phi(x) - \phi(x^\ast))
\end{align*}
for any $m\in (0, \lambda_{min}(\nabla^2\phi(x^\ast))$ in a possible smaller neighborhood as $\phi$ is $m$-strongly convex in a neighborhood around $x^\ast$. Setting $\omega\coloneqq(1+2\varepsilon m^{-1})^{-1}$ yields the claim. 
\end{proof}

\begin{remark}[Location of maximizers]\label{rem:locMaximizers}
The condition that $\eta^\ast_\lambda\in\operatorname{int}(\mathcal N)$ assumed in Theorem~\ref{thm:inearConvRegularized} is satisfied if the gradient blow-up condition from Definition~\ref{def:LegendreType} is slightly strengthened. Indeed, suppose that for any $\eta\in\partial\mathcal N$ there is a direction $v$ such that $\eta+tv\in\operatorname{int}(\mathcal N)$ for small $t$ and such that $\partial_v \phi(\eta+tv) = v^\top\nabla\phi(\eta+tv)\to-\infty$ for $t\to0$. If $\phi(\eta) = \infty$, surely $\eta\ne\eta^\ast$. To argue in the case that $\phi(\eta)<+\infty$, we note that $\partial_v \mathfrak R_\lambda(\eta+tv)\to+\infty$ and choose $t_0>0$ such that $\partial_v \mathfrak R_\lambda(\eta+t_0v)>0$. 
Then by the concavity of $\mathfrak R_\lambda$ and continuity of $\mathfrak R_\lambda$ we have
    \[ 
    \mathfrak R_\lambda(\eta) \le \mathfrak R_\lambda(\eta+t_0v) - t_0 \partial_v \mathfrak R_\lambda(\eta+t_0v) < \mathfrak R_\lambda(\eta+t_0v) , 
    \]
and hence $\eta\ne\eta^\ast$. 
\end{remark}

Now we elaborate the consequences of this general convergence result given in Theorem~\ref{thm:inearConvRegularized} for Kakade and $\sigma$-NPG flows. 

\begin{corollary}[Linear convergence of regularized Kakade's NPG flow]\label{cor:kakadeLinearConvReg}
Assume that $\eta\colon[0, \infty)\to\operatorname{int}(\mathcal N)$ solves the natural policy gradient flow with respect to the regularized reward $\mathfrak R_\lambda$ and the Hessian geometry induced by $\phi$. 
For any $\omega\in(0, \lambda)$ there exists a constant $K>0$ such that $D_\phi(\eta^\ast, \eta(t)) \le K e^{-\omega t}$. In particular, for any $\kappa\in(\kappa_c, \infty)$ this implies $R^\ast_\lambda-\mathfrak R_\lambda(\eta(t))\le \kappa  K e^{-\omega t}$, where $\kappa_c$ denotes the condition number of $\nabla^2\phi_C(\eta^\ast)$.
\end{corollary}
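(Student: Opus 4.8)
The plan is to obtain this statement as the specialization of Theorem~\ref{thm:inearConvRegularized} to the choice $\phi = \phi_C$, the conditional entropy from~\eqref{eq:conditionalEntropySA}. By Theorem~\ref{thm:kakadeAsPullback}, Kakade's geometry in state-action space is precisely the Hessian geometry of $\phi_C$, so by Proposition~\ref{prop:evolutionStateAction} the trajectory $\eta(t)$ of the regularized Kakade NPG flow coincides with the Hessian gradient flow on $\mathcal N$ induced by $\phi_C$ for the objective $\mathfrak R_\lambda(\eta) = \langle r, \eta\rangle - \lambda\phi_C(\eta)$. Thus the entire task reduces to checking that the hypotheses of Theorem~\ref{thm:inearConvRegularized} hold for $\phi_C$, after which the two displayed estimates are exactly its conclusion, with $c$ relabeled as $\omega$ and the constant adjusted (the factor $\lambda$ being absorbed into the free constant $K$).

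Two hypotheses must be verified. First, $\phi_C$ must be of Legendre type; this is recorded in the Example following Definition~\ref{def:LegendreType}, the only subtlety being that its gradient blows up on $\partial\mathcal N$ rather than on the boundary of the full simplex. Second, and more delicately, I must show that the regularized maximizer $\eta^\ast = \eta^\ast_\lambda$ lies in $\operatorname{int}(\mathcal N)$, which is the standing assumption of Theorem~\ref{thm:inearConvRegularized}. For this I would invoke Remark~\ref{rem:locMaximizers}: it suffices to exhibit, at every boundary point $\eta\in\partial\mathcal N$, an inward direction $v$ (i.e. $\eta + tv\in\operatorname{int}(\mathcal N)$ for small $t>0$) along which $\partial_v\phi_C(\eta+tv)\to-\infty$ as $t\to0$. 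The key computation is that the gradient of $\phi_C$ has entries equal, up to additive constants, to $\log\pi(a|s) = \log(\eta(s,a)/\rho(s))$. Choosing $v$ with positive mass on a coordinate $(s_0,a_0)$ for which $\eta(s_0,a_0)=0$ while $\rho(s_0)>0$ (the latter guaranteed by the positivity Assumption~\ref{ass:positivity}), the corresponding entry $\log\pi(a_0|s_0)\to-\infty$ dominates and forces $\partial_v\phi_C(\eta+tv)\to-\infty$, as required.

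With both hypotheses in hand, Theorem~\ref{thm:inearConvRegularized} applied with any $c=\omega\in(0,\lambda)$ yields the Bregman bound $D_{\phi_C}(\eta^\ast,\eta(t))\le K e^{-\omega t}$, and the reward bound $R^\ast_\lambda - \mathfrak R_\lambda(\eta(t))\le \kappa K e^{-\omega t}$ follows from the local strong convexity estimate in that proof, where $\kappa_c$ is the condition number of $\nabla^2\phi_C(\eta^\ast)$ and $\kappa\in(\kappa_c,\infty)$. I expect the verification of the interior maximizer via Remark~\ref{rem:locMaximizers} to be the only genuine obstacle, precisely because the conditional entropy is a non-standard Legendre function whose gradient diverges merely on the feasible boundary $\partial\mathcal N$; once the $\log\pi$-structure of $\nabla\phi_C$ is combined with Assumption~\ref{ass:positivity}, the remaining steps are a direct transcription of Theorem~\ref{thm:inearConvRegularized}.
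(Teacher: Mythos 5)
Your proposal is correct and follows essentially the same route as the paper: the paper's proof also invokes Remark~\ref{rem:locMaximizers}, computes that the entries of $\nabla\phi_C(\eta)$ are $\log\eta(s,a)-\log\rho(s)$, and uses Assumption~\ref{ass:positivity} (so that $\rho_\varepsilon(s)\to\rho(s)>0$ keeps the marginal terms bounded while $\log\eta_\varepsilon(s',a')\to-\infty$ on the vanishing coordinates) to get the directional gradient blow-up along any inward direction, after which Theorem~\ref{thm:inearConvRegularized} delivers both estimates. Your identification of the interior-maximizer verification as the only genuine obstacle matches the paper exactly, as its proof consists of nothing but that verification.
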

\begin{proof}

We want to use Remark~\ref{rem:locMaximizers}. Recall that 
    \[ \phi_C(\eta) = H(\eta) - H(\rho) = \sum_{s,a} \eta(s,a)\log(\eta(s,a)) - \sum_{s} \rho(s)\log(\rho(s)), \]
where $\rho(s) = \sum_{a}  \eta(s,a)$ is the state marginal. Note that by Assumption~\ref{ass:positivity} it holds that $\rho(s)>0$. Hence, if $\eta\in\partial\mathcal N$ we can take any $v\in\mathbb R^{\SS\times\AA}$ such that $\eta_\varepsilon\coloneqq \eta+\varepsilon v\in\operatorname{int}(\mathcal N)$ for small $\varepsilon>0$. Writing $\rho_\varepsilon$ for the associated state marginal, we obtain
    \[ \partial_v \phi_C(\eta_\varepsilon) = \sum_{s,a} \log(\eta_\varepsilon(s,a)) + \lvert \SS\rvert(\lvert\AA\rvert-1) - \sum_{s}\log(\rho_\varepsilon(s)) \to - \infty \]
for $\varepsilon\to0$ since $\eta(s',a') = 0$ for some $s'\in\SS, a'\in\AA$ and $\rho_\varepsilon(s)\to\rho(s)>0$ for all $s\in\SS$. 
\end{proof}

\begin{corollary}[Linear convergence for regularized $\sigma$-NPG flow]\label{cor:sigmaLinearConvReg}
Consider Setting~\ref{set:MDPconvergence} with $\phi=\phi_\sigma$ for some $\sigma\in[1, \infty)$ and denote the regularized reward by $\mathfrak R_\lambda(\eta) = \langle r, \eta\rangle - \lambda \phi(\eta)$ and fix an element $\eta_0\in\operatorname{int}(\mathcal N)$. Assume that $\eta\colon[0, \infty)\to\operatorname{int}(\mathcal N)$ solves the natural policy gradient flow with respect to the regularized reward $\mathfrak R_\lambda$ and the Hessian geometry induced by $\phi$. For any $\omega\in(0, \lambda)$ there exists a constant $K>0$ such that $D_\phi(\eta^\ast, \eta(t)) \le K e^{-\omega t}$. In particular, for any $\kappa\in(\kappa(\eta^\ast)^\sigma, \infty)$ this implies $R^\ast_\lambda-\mathfrak R_\lambda(\eta(t))\le \kappa  K e^{-\omega t}$, where $\kappa(\eta^\ast) = \frac{\max \eta^\ast}{\min \eta^\ast}$
\end{corollary}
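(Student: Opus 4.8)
The plan is to obtain this corollary directly from the general linear convergence result Theorem~\ref{thm:inearConvRegularized}, whose only nontrivial hypothesis is that the maximizer $\eta^\ast$ of $\mathfrak R_\lambda$ lies in $\operatorname{int}(\mathcal N)$. Since $\phi_\sigma$ is a Legendre type function for $\sigma\in[1,\infty)$, the theorem is applicable once interiority is established, and then the exponential decay $D_\phi(\eta^\ast,\eta(t))\le Ke^{-\omega t}$ for $\omega\in(0,\lambda)$ is immediate; the only remaining task is to identify the condition number that governs the objective-value bound.

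First I would verify interiority via the sufficient criterion in Remark~\ref{rem:locMaximizers}. Fix $\eta\in\partial\mathcal N$, so that $\eta(s',a')=0$ for at least one pair $(s',a')$, and choose the inward direction $v\coloneqq\eta_0-\eta$. Because $\eta_0\in\operatorname{int}(\mathcal N)$, the point $\eta+tv=(1-t)\eta+t\eta_0$ lies in $\operatorname{int}(\mathcal N)$ for $t\in(0,1]$, and crucially $v(s,a)=\eta_0(s,a)>0$ on every vanishing coordinate of $\eta$. The partial derivatives $\partial_i\phi_1(x)=\log(x_i)+1$, $\partial_i\phi_2(x)=-x_i^{-1}$, and $\partial_i\phi_\sigma(x)=\tfrac{1}{1-\sigma}x_i^{1-\sigma}$ for the remaining $\sigma>1$ all tend to $-\infty$ as $x_i\to0^+$ (using $1-\sigma<0$ in the last case). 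Splitting $\partial_v\phi_\sigma(\eta+tv)$ into contributions from coordinates with $\eta(s,a)>0$, which stay bounded, and coordinates with $\eta(s,a)=0$, each of which diverges to $-\infty$ since $v>0$ there, gives $\partial_v\phi_\sigma(\eta+tv)\to-\infty$ as $t\to0$. By Remark~\ref{rem:locMaximizers} this forces $\eta\ne\eta^\ast$, so $\eta^\ast\in\operatorname{int}(\mathcal N)$ and Theorem~\ref{thm:inearConvRegularized} applies.

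It remains to turn the Bregman decay into the objective-value bound with the stated constant. The final estimate in the proof of Theorem~\ref{thm:inearConvRegularized} bounds $R^\ast_\lambda-\mathfrak R_\lambda(\eta(t))$ by $\kappa$ times $D_\phi(\eta^\ast,\eta(t))$, where $\kappa$ exceeds the condition number of $\nabla^2\phi(\eta^\ast)$. Here $\phi=\phi_\sigma$ induces the metric $g^\sigma_x(v,w)=\sum_i v_iw_i x_i^{-\sigma}$ from~\eqref{eq:sigmaMetric}, so $\nabla^2\phi_\sigma(\eta^\ast)=\operatorname{diag}\big((\eta^\ast(s,a))^{-\sigma}\big)$ is diagonal with strictly positive entries (using $\eta^\ast\in\operatorname{int}(\mathcal N)$). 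Its condition number is $\frac{\max_{s,a}(\eta^\ast(s,a))^{-\sigma}}{\min_{s,a}(\eta^\ast(s,a))^{-\sigma}}=\left(\frac{\max\eta^\ast}{\min\eta^\ast}\right)^\sigma=\kappa(\eta^\ast)^\sigma$, yielding the admissible range $\kappa\in(\kappa(\eta^\ast)^\sigma,\infty)$ and absorbing the factor $\lambda$ into $K$.

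I expect the interiority verification to be the main obstacle — not because it is deep, but because it must cover the three regimes of $\phi_\sigma$ uniformly and confirm that the divergent contributions from all vanishing coordinates genuinely dominate the bounded ones. Using the full-space condition number of $\nabla^2\phi_\sigma(\eta^\ast)$ rather than its restriction to $T\mathcal L$ is a deliberate over-estimate that keeps the last step clean, at the cost of a slightly conservative constant.
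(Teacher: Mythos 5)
Your proposal is correct and takes essentially the same route as the paper's own proof: establish interiority of the maximizer via Remark~\ref{rem:locMaximizers}, invoke Theorem~\ref{thm:inearConvRegularized}, and read off the constant from $\nabla^2\phi_\sigma(\eta^\ast) = \operatorname{diag}(\eta^\ast)^{-\sigma}$, whose condition number is $\kappa(\eta^\ast)^\sigma$. The only difference is that you spell out the gradient blow-up verification (the inward direction $v=\eta_0-\eta$ and the three regimes $\sigma=1$, $\sigma=2$, $\sigma\in(1,\infty)\setminus\{2\}$) that the paper dismisses as straightforward, which is a welcome filling-in of detail rather than a departure.
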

\begin{proof}
Again, we use Remark~\ref{rem:locMaximizers} it is straight forward to see that for the Legendre type functions $\phi_\sigma$ the unique maximizer $\eta^\ast$ of $\mathfrak R_\lambda$ lies in the interior of $\mathcal N$. Hence, it remains to compute the condition number, for which we note that $\nabla^2\phi_\sigma(\eta^\ast) = \operatorname{diag}(\eta^\ast)^{-\sigma}$, which yields the result.
\end{proof}

\begin{remark}[Extension to arbitrary regularizers]
The results above do not cover arbitrary combinations of Hessian geometries and regularizers. However, the proof of Theorem~\ref{thm:inearConvRegularized} can be adapted to this case, where the only part that requires  adjustments is~\eqref{eq:coupling} that couples the regularized reward to the Bregman divergence. In principle, this can be extended to the case of regularizers that are different from the function inducing the Hessian geometry. 
\end{remark}

\section{Locally quadratic convergence for regularized problems}\label{sec:convergenceDiscrete}

It is known that Kakade's NPG method and more generally quasi-Newton policy gradient methods with suitable regularization and step sizes converge at a locally quadratic rate~\cite{cen2021fast, li2021quasi}. 
Whereas these results regard the NPG method as an inexact Newton method in the parameter space, we regard it as an inexact Newton method in state-action space, which allows us to directly leverage results from the optimization literature and thus formulate relatively short proofs. 
Our result extends the locally quadratic convergence rate to general Hessian-NPG methods, which include in particular Kakade's and Morimura's NPG. Note that the result holds when the step size is equal to the penalization strength, which is reminiscent of Newton's method converging for step size $1$. 

\begin{theorem}[Locally quadratic convergence of regularized NPG methods]
\label{thm:localQuadraticConvergence} 
Consider a real-valued function $\phi\colon\mathbb R^{\SS\times\AA}\to\mathbb R\cup\{+\infty\}$, which we assume to be finite and twice continuously differentiable on $\mathbb R_{>0}^{\mathcal S\times\mathcal A}$ and such that $\nabla^2\phi(\eta)$ is positive definite on $T_\eta\mathcal N = T\mathcal L\subseteq\mathbb R^{\mathcal S\times\mathcal A}$ for every $\eta\in \operatorname{int}(\mathcal N)$.
Further, consider a regular policy parametrization 
and the regularized reward $R_\lambda(\theta) \coloneqq R(\theta) + \lambda\phi(\eta_\theta)$ and assume that $\eta^\ast\in\operatorname{int}(\mathcal N)$, i.e., the maximizer lies in the interior of the state-action polytope. Consider the NPG induced by the Hessian geometry of $\phi$, i.e., 
\[ 
\theta_{k+1} = \theta_k + \Delta tG(\theta_k)^{+}\nabla R_\lambda(\theta_k) , 
\] 
with step size $\Delta t = \lambda$, where $G(\theta_k)^+$ denotes the Moore-Penrose inverse. 
Assume that $R_\lambda(\theta_k)\to R_\lambda^\ast$ for $k\to\infty$. 
Then $\theta_k\to\theta^\ast$ at a (locally) quadratic rate and hence $R_\lambda(\theta_k)\to R^\ast_\lambda$ at a (locally) quadratic rate. 
\end{theorem}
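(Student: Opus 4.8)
The plan is to push the iteration from parameter space into state-action space and there recognize it as an inexact Newton method for the program $\max_{\eta\in\mathcal N}\mathfrak R_\lambda(\eta)$, where $\mathfrak R_\lambda$ is the state-action objective defined by $R_\lambda(\theta)=\mathfrak R_\lambda(\eta_\theta)$ (the linear reward plus the regularizer, which is concave near $\eta^\ast$). The entry point is Theorem~\ref{thm:NGinModelSpace}: applied to the factorization $\theta\mapsto\eta_\theta\mapsto\mathfrak R_\lambda$ and the Hessian metric $g=\nabla^2\phi$, it gives $dP_{\theta_k}\big(G(\theta_k)^{+}\nabla R_\lambda(\theta_k)\big)=\Pi^g_{T\mathcal L}\big(\nabla^g\mathfrak R_\lambda(\eta_k)\big)$, with $\Pi^g_{T\mathcal L}$ the $g$-orthogonal projection onto $T\mathcal L$. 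Taylor expanding $\eta_{k+1}=P\big(\theta_k+\lambda G(\theta_k)^{+}\nabla R_\lambda(\theta_k)\big)$ around $\theta_k$ then yields
\[
\eta_{k+1}=\eta_k+\lambda\,\Pi^g_{T\mathcal L}\!\left((\nabla^2\phi(\eta_k))^{-1}\nabla\mathfrak R_\lambda(\eta_k)\right)+E_k ,
\]
with a remainder $E_k$ coming from the nonlinearity of $P$, to be controlled below.

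First I would identify the leading term with the constrained Newton step. Since the reward is linear in $\eta$, all curvature of $\mathfrak R_\lambda$ is carried by the penalty, $\nabla^2\mathfrak R_\lambda=\mp\lambda\nabla^2\phi$, while the preconditioner built into the $\phi$-NPG is exactly $(\nabla^2\phi(\eta_k))^{-1}$. Using the elementary identity $\Pi^g_{T\mathcal L}\big((\nabla^2\phi)^{-1}w\big)=\big[\Pi_{T\mathcal L}\,\nabla^2\phi\,\Pi_{T\mathcal L}\big]^{+}\Pi_{T\mathcal L}w$, which relates the $g$-orthogonal projection to the Euclidean one, the prescribed step size $\Delta t=\lambda$ is precisely the choice for which the scalar factors reconcile and the leading term becomes the Newton step
\[
d_k \coloneqq -\big[\Pi_{T\mathcal L}\,\nabla^2\mathfrak R_\lambda(\eta_k)\,\Pi_{T\mathcal L}\big]^{+}\Pi_{T\mathcal L}\nabla\mathfrak R_\lambda(\eta_k)
\]
for maximizing $\mathfrak R_\lambda$ over the affine hull $\mathcal L$ of $\mathcal N$; verifying this cancellation is the short computation in which the hypothesis tying step size to penalization strength enters, and it is the analogue of Newton's unit step size. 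The assumption $\eta^\ast\in\operatorname{int}(\mathcal N)$ together with positive definiteness of $\nabla^2\phi$ on $T\mathcal L$ guarantees that $\mathfrak R_\lambda$ is $C^2$ and strongly concave on a neighborhood of $\eta^\ast$, so $d_k$ is well defined there and $\eta^\ast$ is its unique fixed point.

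The main obstacle is certifying that $E_k$ is an admissible inexact-Newton residual, i.e. $\|E_k\|=O(\|\eta_k-\eta^\ast\|^2)$, and here two points are essential. Because the Moore--Penrose inverse is used, the update direction lies in $\ker(dP_{\theta_k})^{\perp}$, so the iterates remain in an affine slice of parameter space on which $\theta\mapsto\eta_\theta$ is a local diffeomorphism near $\theta^\ast$ (Proposition~\ref{prop:birational}); on this slice $D^2P$ is bounded and the step size is of order $\|\nabla\mathfrak R_\lambda(\eta_k)\|=O(\|\eta_k-\eta^\ast\|)$, whence the second-order Taylor remainder satisfies $\|E_k\|=O(\|\eta_k-\eta^\ast\|^2)$. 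Moreover the hypothesis $R_\lambda(\theta_k)\to R_\lambda^\ast$, combined with strong concavity and uniqueness of the interior maximizer, forces $\eta_k\to\eta^\ast$, so the iterates eventually enter the local basin; this is exactly what makes the conclusion \emph{local}.

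With these ingredients the result follows from the classical local theory of (inexact) Newton's method: the exact step contracts quadratically, and perturbing it by a residual $E_k=O(\|\eta_k-\eta^\ast\|^2)$ preserves the quadratic rate, so $\|\eta_k-\eta^\ast\|$ converges quadratically. Transporting back through the local diffeomorphism gives the quadratic convergence $\theta_k\to\theta^\ast$, and smoothness of $\mathfrak R_\lambda$ upgrades this to a quadratic rate for $R_\lambda(\theta_k)\to R_\lambda^\ast$. I expect the delicate part of a full write-up to be the uniform bookkeeping of constants in the remainder estimate --- boundedness of $D^2P$ and of $(\nabla^2\phi)^{-1}$ on a fixed neighborhood of $\eta^\ast$ --- rather than the Newton convergence argument itself.
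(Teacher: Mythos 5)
Your proposal is correct and follows essentially the same route as the paper's own proof: lift the NPG iteration to state-action space, recognize the leading term as the projected Newton step for $\mathfrak R_\lambda$ on the affine hull of $\mathcal N$, bound the Taylor remainder of $\theta\mapsto\eta_\theta$ as a second-order inexact-Newton residual, and invoke local inexact-Newton convergence theory (the paper cites Theorem~3.3 of Dembo et al.) together with the assumed convergence $R_\lambda(\theta_k)\to R_\lambda^\ast$. The differences are cosmetic only --- you gauge the residual by $\lVert\eta_k-\eta^\ast\rVert$ where the paper uses the equivalent quantity $\lVert\Pi_{T\mathcal L}\nabla\mathfrak R_\lambda(\eta_k)\rVert$, and your explicit bookkeeping of the $\ker(dP_{\theta_k})^\perp$ slice and the transport back to parameter space via Proposition~\ref{prop:birational} fills in details the paper leaves implicit.
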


The proof of this result relies on the following convergence result for inexact Newton methods. 

\begin{theorem}[Theorem 3.3 in~\cite{dembo1982inexact}]
Consider an objective function $f\in C^2(\mathbb R^d)$ with $\nabla^2f(x)\in\mathbb S^{sym}_{>0}$ for any $x\in\mathbb R^d$ and assume that $f$ admits a minimizer $x^\ast$. 
Let $(x_k)$ be inexact Newton iterates given by 
\[ 
x_{k+1} = x_k + \nabla^2 f(x_k)^{-1}\nabla f(x_k) + \varepsilon_k , 
\]
and assume that they converge towards the minimum $x^\ast$. If $\lVert \varepsilon_k \rVert = O(\lVert \nabla f(x_k)\rVert^{\omega})$, then $x_k\to x^\ast$ at rate $\omega$, i.e., $\lVert x_k-x^\ast \rVert = O(e^{-k^{\omega}})$. 
\end{theorem}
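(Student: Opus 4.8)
The plan is to treat the iteration as an exact Newton step perturbed by the controlled error $\varepsilon_k$ and to reduce everything to a local analysis near $x^\ast$, using the standing hypothesis $x_k\to x^\ast$ so that only the \emph{rate} must be established, not convergence itself. Write $e_k\coloneqq x_k-x^\ast$. Since $f\in C^2$ and $\nabla^2 f(x^\ast)\succ 0$, the Hessian stays uniformly positive definite and, on a relatively compact neighborhood of $x^\ast$, admits a local Lipschitz modulus; this is the regularity I would use to obtain the two standard building blocks of Newton analysis.

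First I would record the gradient--error equivalence. Because $\nabla f(x^\ast)=0$, Taylor expansion gives $\nabla f(x_k)=\nabla^2 f(x^\ast)e_k+o(\lVert e_k\rVert)$, and invertibility of $\nabla^2 f(x^\ast)$ yields the two-sided bound $\lVert\nabla f(x_k)\rVert=\Theta(\lVert e_k\rVert)$ for $x_k$ near $x^\ast$. In particular the hypothesis $\lVert\varepsilon_k\rVert=O(\lVert\nabla f(x_k)\rVert^\omega)$ translates into $\lVert\varepsilon_k\rVert=O(\lVert e_k\rVert^\omega)$, converting the residual-type control into a control in terms of the iterate error.

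Second, I would establish the quadratic contraction of the exact Newton map $N(x)\coloneqq x-\nabla^2 f(x)^{-1}\nabla f(x)$, namely $\lVert N(x_k)-x^\ast\rVert=O(\lVert e_k\rVert^2)$. This is the classical estimate: writing $\nabla f(x_k)=\int_0^1\nabla^2 f(x^\ast+t e_k)e_k\,dt$ and using $N(x_k)-x^\ast=\nabla^2 f(x_k)^{-1}\int_0^1[\nabla^2 f(x_k)-\nabla^2 f(x^\ast+t e_k)]e_k\,dt$, the local Lipschitz bound on $\nabla^2 f$ together with the uniform bound on $\nabla^2 f(x_k)^{-1}$ gives the $O(\lVert e_k\rVert^2)$ term. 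Interpreting the deterministic part of the stated iteration as this Newton step toward the attracting fixed point $x^\ast$ (consistent with the assumption $x_k\to x^\ast$), I then combine the two estimates:
\[
\lVert e_{k+1}\rVert\le \lVert N(x_k)-x^\ast\rVert+\lVert\varepsilon_k\rVert=O(\lVert e_k\rVert^2)+O(\lVert e_k\rVert^\omega)=O(\lVert e_k\rVert^{q}),\qquad q\coloneqq\min(2,\omega)>1 .
\]

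Finally I would convert this superlinear recursion into the asserted rate. On a neighborhood where $\lVert e_{k+1}\rVert\le C\lVert e_k\rVert^{q}$, rescaling $b_k\coloneqq C^{1/(q-1)}\lVert e_k\rVert$ gives $b_{k+1}\le b_k^{q}$, hence $b_k\le b_0^{\,q^{k}}$; shrinking the neighborhood so that $b_0<1$ yields $\lVert e_k\rVert=O(\exp(-c\,q^{k}))$ for some $c>0$, i.e.\ convergence of order $q=\min(2,\omega)$. Since $q>1$, the exponent $q^{k}$ outgrows every power $k^\omega$, so in particular $\lVert x_k-x^\ast\rVert=O(e^{-k^\omega})$, which is the claimed bound. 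The main obstacle is the third step: pinning down the exact Newton contraction at quadratic order requires a Lipschitz-type modulus for $\nabla^2 f$ rather than bare $C^2$ continuity (under which the Newton term degrades to $o(\lVert e_k\rVert)$), and one must fix the sign convention so that the deterministic part is genuinely the contracting Newton map rather than its reflection; both are routine once the local regularity is made explicit, but that is where the real content sits.
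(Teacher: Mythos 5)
Your argument is sound, but note that the paper itself offers no proof of this statement: it is quoted verbatim (with attribution) from Dembo--Eisenstat--Steihaug, so there is no internal derivation to compare against, and your proposal reconstructs the standard local analysis behind the cited result. The skeleton is exactly right: the two-sided equivalence $\lVert \nabla f(x_k)\rVert = \Theta(\lVert e_k\rVert)$ from $\nabla f(x^\ast)=0$ and invertibility of $\nabla^2 f(x^\ast)$, the splitting $\lVert e_{k+1}\rVert \le \lVert N(x_k)-x^\ast\rVert + \lVert \varepsilon_k\rVert$, and the rescaling that turns $\lVert e_{k+1}\rVert \le C\lVert e_k\rVert^{q}$ into $\lVert e_k\rVert = O(\rho^{\,q^k})$ are all correct; in fact you prove more than the statement asks, since $\rho^{\,q^k}$ with $q=\min(2,\omega)>1$ decays faster than $e^{-k^\omega}$. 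Your two caveats are genuine and match the source. First, the plus sign in the printed iteration is a transcription slip (in Section 6 the method \emph{maximizes} $R_\lambda$), and your reading of the deterministic part as the contracting Newton map is the intended one. Second, bare $C^2$ regularity only gives $\lVert N(x_k)-x^\ast\rVert = o(\lVert e_k\rVert)$, hence superlinear but not order-$\omega$ convergence, so the restated theorem is slightly too strong as written: the original Theorem 3.3 of Dembo--Eisenstat--Steihaug assumes H\"older continuity of the Jacobian (here $\nabla^2 f$) with exponent $p=\omega-1$, under which the Newton term is $O(\lVert e_k\rVert^{\omega})$ directly; your local-Lipschitz variant covers $\omega\le 2$ a fortiori. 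One implicit hypothesis you should state explicitly is $\omega>1$: for $\omega=1$ your recursion degenerates to $\lVert e_{k+1}\rVert \le C\lVert e_k\rVert$ with no guarantee $C<1$, and the claimed $O(e^{-k})$ bound can fail, which is why the source formulates the hypothesis via forcing sequences $\eta_k = O(\lVert F(x_k)\rVert^{p})$ with $p\in(0,1]$ and controls the relative residual $\lVert r_k\rVert \le \eta_k \lVert F(x_k)\rVert$; your step-error $\varepsilon_k$ corresponds to $\nabla^2 f(x_k)^{-1} r_k$, which is the correct bridge between the two formulations.
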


We take this approach and show that the iterates of the regularized NPG method can be interpreted as an inexact Newton method in state-action space. 
For this, we first make the form of the Newton updates in state-action space explicit. 

\begin{lemma}[Newton iteration in state-action space]
The iterates of Newton's method in state-action space are given by
\begin{equation}
    \eta_{k+1} = \eta_k + \Pi_{T\mathcal L}^{E}(\nabla^2 \mathfrak R_\lambda(\eta_k))^{-1} \Pi_{T\mathcal L}^{E}(\nabla \mathfrak R_\lambda(\eta_k)),
\end{equation}
where $\mathfrak R_\lambda(\eta) = \langle r, \eta\rangle + \lambda \phi(\eta)$ is the regularized reward and $\Pi_{T\mathcal L}^{E}$ the Euclidean projection onto the tangent space of the affine space $L$ defined in~\eqref{eq:linearSpace}.
\end{lemma}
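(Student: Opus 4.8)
The plan is to recognize the displayed iteration as the ordinary Newton step for the restriction of $\mathfrak R_\lambda$ to the affine space $\mathcal L$, written in a coordinate-free way. Since the maximizer is assumed to lie in $\operatorname{int}(\mathcal N)$, the positivity constraints are inactive near the optimum, so ``Newton's method in state-action space'' means nothing but Newton's method for optimizing $\eta\mapsto\mathfrak R_\lambda(\eta)$ over the affine set $\mathcal L = \eta_k + T\mathcal L$; I would make this reduction explicit at the outset, noting that the linear term $\langle r,\eta\rangle$ contributes nothing to the Hessian, so that $\nabla^2\mathfrak R_\lambda = \lambda\nabla^2\phi$.

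First I would fix an orthonormal basis of the direction space $T\mathcal L$ and collect it as the columns of a matrix $B\in\mathbb R^{(\SS\times\AA)\times m}$, where $m=\dim T\mathcal L$, so that $B^\top B = I_m$ and $BB^\top = \Pi_{T\mathcal L}^{E}$. Parametrizing the affine slice by $\eta = \eta_k + Bx$ with $x\in\mathbb R^m$ and setting $h(x)\coloneqq \mathfrak R_\lambda(\eta_k + Bx)$, the chain rule gives $\nabla h(x) = B^\top\nabla\mathfrak R_\lambda(\eta_k+Bx)$ and $\nabla^2 h(x) = B^\top\nabla^2\mathfrak R_\lambda(\eta_k+Bx)\,B$. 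Because $\nabla^2\mathfrak R_\lambda(\eta_k)$ is (by hypothesis, up to the sign of $\lambda$) definite on $T_\eta\mathcal N = T\mathcal L$, the matrix $B^\top\nabla^2\mathfrak R_\lambda(\eta_k) B$ is invertible, so the slice Newton step $x_{k+1} = x_k + (\nabla^2 h(x_k))^{-1}\nabla h(x_k)$ (with the sign convention of the statement) is well defined; translating back through $\eta_{k+1} = \eta_k + B(x_{k+1}-x_k)$ yields
\begin{equation*}
    \eta_{k+1} = \eta_k + B\bigl(B^\top\nabla^2\mathfrak R_\lambda(\eta_k)B\bigr)^{-1}B^\top\nabla\mathfrak R_\lambda(\eta_k).
\end{equation*}

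The remaining step is the linear-algebra identity rewriting this basis-dependent expression in the claimed form. The restricted Hessian $\Pi_{T\mathcal L}^{E}\nabla^2\mathfrak R_\lambda(\eta_k)\Pi_{T\mathcal L}^{E}$, viewed as an endomorphism of $T\mathcal L$, has matrix $B^\top\nabla^2\mathfrak R_\lambda(\eta_k)B$ in the chosen basis; hence its inverse on $T\mathcal L$, pre-composed with $B$ and with coordinates extracted by $B^\top$, is exactly $B(B^\top\nabla^2\mathfrak R_\lambda(\eta_k)B)^{-1}B^\top$, which is what the composite symbol $\Pi_{T\mathcal L}^{E}(\nabla^2\mathfrak R_\lambda(\eta_k))^{-1}$ denotes. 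Using $B^\top\Pi_{T\mathcal L}^{E}=B^\top$ (from $BB^\top=\Pi_{T\mathcal L}^{E}$ and $B^\top B=I_m$), one has $B^\top\nabla\mathfrak R_\lambda = B^\top\Pi_{T\mathcal L}^{E}\nabla\mathfrak R_\lambda$, so the vector acted upon is precisely the projected gradient $\Pi_{T\mathcal L}^{E}(\nabla\mathfrak R_\lambda(\eta_k))$, giving the claim. I would also record that $B(B^\top A B)^{-1}B^\top$ is unchanged under $B\mapsto BQ$ for orthogonal $Q$, so the expression does not depend on the chosen orthonormal basis.

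The main obstacle is notational rather than computational: one must pin down the precise meaning of $\Pi_{T\mathcal L}^{E}(\nabla^2\mathfrak R_\lambda(\eta_k))^{-1}\Pi_{T\mathcal L}^{E}$ — namely that the inverse is taken of the Hessian \emph{restricted} to the subspace $T\mathcal L$, on which it is definite, and then extended to $\mathbb R^{\SS\times\AA}$ by the Euclidean projection. This care is genuinely needed because $\nabla^2\phi$ (for instance the conditional entropy inducing Kakade's NPG) is only definite on $T\mathcal L$ and may be singular on all of $\mathbb R^{\SS\times\AA}$, so the naive full-space inverse does not exist. Everything else is the standard reduction of constrained Newton on an affine set to an unconstrained Newton step in a slice.
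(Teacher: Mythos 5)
Your proposal is correct, and its skeleton is the same as the paper's proof: use affine invariance to reduce Newton's method on $\mathcal L$ to an unconstrained Newton step in a linear parametrization of the slice, compute the gradient and Hessian of the restricted objective by the chain rule, and push the step back to state-action space, arriving at $\eta_{k+1}=\eta_k+B\bigl(B^\top\nabla^2\mathfrak R_\lambda(\eta_k)B\bigr)^{-1}B^\top\nabla\mathfrak R_\lambda(\eta_k)$. The two arguments part ways in the final identification of this expression with the lemma's formula, and here your treatment is genuinely sharper. The paper writes the inclusion as $\iota$ and factors the inverse through Moore--Penrose pseudo-inverses, in effect invoking $(\iota^\top A\iota)^{-1}=\iota^+A^{-1}(\iota^\top)^+$ with $A=\nabla^2\mathfrak R_\lambda(\eta_k)$; read literally this identity is false for general symmetric positive definite $A$ (the inverse of a compression is not the compression of the inverse: for $A=\left(\begin{smallmatrix}2&1\\1&2\end{smallmatrix}\right)$ and $\iota=e_1$ the two sides are $1/2$ and $2/3$), and it moreover presupposes that $A^{-1}$ exists on all of $\mathbb R^{\SS\times\AA}$, which fails precisely for the conditional entropy inducing Kakade's NPG. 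You avoid this by pinning down the composite symbol $\Pi_{T\mathcal L}^{E}(\nabla^2\mathfrak R_\lambda(\eta_k))^{-1}\Pi_{T\mathcal L}^{E}$ as the inverse of the Hessian \emph{restricted} to $T\mathcal L$, extended by the projection, checking basis independence of $B(B^\top AB)^{-1}B^\top$, and proving the lemma under that reading --- which is the only reading under which the statement is true and under which it feeds correctly into the subsequent inexact-Newton argument of Theorem~\ref{thm:localQuadraticConvergence}. So you buy rigor at the one step where the paper's pseudo-inverse manipulation glosses over a restriction-of-inverse versus inverse-of-restriction distinction; everything else coincides with the paper's route.
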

\begin{proof}
The domain of the optimization problem is $\mathbb R^{\SS\times\AA}_{\ge0}\cap\mathcal L$ an hence, we perform Newton's method on the affine subspace $L$. Writing $L = \eta_0 + X$ for a linear subspace $X$ we can equivalently perform Newton's method on $X$ since the method is affine invariant. We denote the canonical $\iota\colon X\hookrightarrow L, x\mapsto x+\eta_0$ and set $f(x)\coloneqq \mathfrak R_\lambda(\iota x)$. Then, we obtain the Newton iterates $x_k$ and $\eta_k = \iota x_k$ by 
\[ x_{k+1} = x_k + \nabla^2f(x_k)^{-1} \nabla f(x_k). \]
Straight up computation yields $\nabla f(x) \iota^\top\nabla \mathfrak R_\lambda(\iota x)$ and $\nabla^2f(x) = \iota^\top \nabla^2\mathfrak R_\lambda(\iota x) \iota$. Hence, we obtain
\begin{align*}
    \eta_{k+1} - \eta_k & = \iota \nabla^2f(x_k)^{-1} \nabla f(x_k) = \iota \iota^+ \nabla^2 \mathfrak R_\lambda(\eta_k)^{-1}(\iota^\top)^+\iota^\top \nabla \mathfrak R_\lambda(\eta_k) \\
    & = \Pi_{T\mathcal L}^{E}(\nabla^2 \mathfrak R_\lambda(\eta_k))^{-1} \Pi_{T\mathcal L}^{E}(\nabla \mathfrak R_\lambda(\eta_k)),
\end{align*}
where we used $AA^+ = \Pi_{\operatorname{range}(A)}$ and $(A^\top)^+A^\top = \Pi_{\operatorname{ker}(A^\top)} = \Pi_{\operatorname{range}(A)}$. 
\end{proof}

\begin{lemma}
\label{lem:NPGasInexactNewton}
Let $(\theta_k)$ be the iterates of a Hessian NPG induced by a stricly convex function $\phi$ and with step size $\Delta t$, i.e,
\[ \theta_{k+1} = \theta_k + \Delta t\cdot G(\theta_k)^{+}\nabla R_\lambda(\theta_k),  \]
where the Gram matrix is given by $G(\theta) = DP(\theta)^\top \nabla^2 \phi(\eta_\theta) DP(\theta)$. Then the state-action iterates $\eta_k\coloneqq \eta_{\theta_k}$ satisfy
\begin{equation}
    \eta_{k+1} = \eta_k + \Delta t \cdot \Pi_{T\mathcal L}^{E}(\nabla^2 \phi(\eta_k)^{-1} \Pi_{T\mathcal L}^{E}(\nabla \mathfrak R_\lambda(\eta_k))) + O(\Delta t^2\lVert G(\theta_k)^+ \nabla R_\lambda(\theta_k) \rVert^2).
\end{equation}
\end{lemma}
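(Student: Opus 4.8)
The plan is to produce the state-action iteration by expanding the parametrization $P\colon\theta\mapsto\eta_\theta$ to first order along the parameter step and then identifying the linear term with the projected inverse Hessian of the preceding Newton-iteration lemma. Writing $\Delta\theta_k\coloneqq\theta_{k+1}-\theta_k=\Delta t\,G(\theta_k)^+\nabla R_\lambda(\theta_k)$ and using that $P$ is smooth (as for tabular softmax), Taylor's theorem gives
\[
\eta_{k+1}-\eta_k=P(\theta_{k+1})-P(\theta_k)=DP(\theta_k)\,\Delta\theta_k+O(\lVert\Delta\theta_k\rVert^2).
\]
Since $\lVert\Delta\theta_k\rVert^2=\Delta t^2\lVert G(\theta_k)^+\nabla R_\lambda(\theta_k)\rVert^2$, the quadratic remainder is exactly the claimed error term, and the statement reduces to evaluating the leading term $\Delta t\,DP(\theta_k)G(\theta_k)^+\nabla R_\lambda(\theta_k)$.

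For this leading term I would first apply the chain rule to the factorization $\theta\mapsto\eta_\theta\mapsto\mathfrak R_\lambda(\eta_\theta)=R_\lambda(\theta)$, giving $\nabla R_\lambda(\theta_k)=DP(\theta_k)^\top\nabla\mathfrak R_\lambda(\eta_k)$, and use $G(\theta_k)=DP(\theta_k)^\top\nabla^2\phi(\eta_k)DP(\theta_k)$. The state-action direction then becomes
\[
DP(\theta_k)\bigl(DP(\theta_k)^\top\nabla^2\phi(\eta_k)DP(\theta_k)\bigr)^+DP(\theta_k)^\top\nabla\mathfrak R_\lambda(\eta_k).
\]
Conceptually this is the object of Theorem~\ref{thm:NGinModelSpace} for model space $\operatorname{int}(\mathcal N)$ carrying the Hessian geometry of $\phi$: regularity of the parametrization gives $\operatorname{range}(DP(\theta_k))=T_{\eta_k}\mathcal N=T\mathcal L$, and the theorem identifies the direction with the $g$-orthogonal projection onto $T\mathcal L$ of the Riemannian gradient $\nabla^2\phi(\eta_k)^{-1}\nabla\mathfrak R_\lambda(\eta_k)$.

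It remains to put this projected Riemannian gradient into the Euclidean form of the statement, namely the \emph{projected inverse Hessian} $\iota\bigl(\iota^\top\nabla^2\phi(\eta_k)\iota\bigr)^{-1}\iota^\top\nabla\mathfrak R_\lambda(\eta_k)$, where $\iota$ is the embedding of $T\mathcal L$ into $\mathbb R^{\SS\times\AA}$ from the preceding lemma; this is what the symbol $\Pi_{T\mathcal L}^E(\nabla^2\phi(\eta_k))^{-1}\Pi_{T\mathcal L}^E(\nabla\mathfrak R_\lambda(\eta_k))$ abbreviates there. I would argue by direct computation: since $\operatorname{range}(DP(\theta_k))=\operatorname{range}(\iota)$, factor $DP(\theta_k)=\iota M$ with $M$ of full row rank, substitute into the displayed expression, and exploit that for a full-row-rank $M$ and invertible $\iota^\top\nabla^2\phi(\eta_k)\iota$ the Moore-Penrose inverse factorizes; cancelling the factors of $M$ via $MM^+=I$ then leaves exactly $\iota\bigl(\iota^\top\nabla^2\phi(\eta_k)\iota\bigr)^{-1}\iota^\top\nabla\mathfrak R_\lambda(\eta_k)$. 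Reinstating $\Delta t$ and the remainder yields the lemma.

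The Taylor step is routine; the genuine obstacle is the pseudo-inverse bookkeeping in the last paragraph. Because regular parametrizations such as tabular softmax are over-parametrized, $DP(\theta_k)$ need not have full column rank, so one cannot cancel it against an inverse directly and must route everything through the factorization $DP(\theta_k)=\iota M$ and the identity $\iota M\bigl(M^\top\iota^\top\nabla^2\phi(\eta_k)\iota M\bigr)^+M^\top\iota^\top=\iota\bigl(\iota^\top\nabla^2\phi(\eta_k)\iota\bigr)^{-1}\iota^\top$. This relies only on $\nabla^2\phi(\eta_k)$ being positive definite on $T\mathcal L$ (so that $\iota^\top\nabla^2\phi(\eta_k)\iota$ is invertible), and one should read the outer projection in the statement as belonging to this projected inverse Hessian, matching the convention of the preceding Newton-iteration lemma, rather than as an ambient matrix inverse.
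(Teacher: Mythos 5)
Your proof is correct, and its skeleton --- Taylor expansion of $P\colon\theta\mapsto\eta_\theta$ along the parameter step, the chain rule $\nabla R_\lambda(\theta_k)=DP(\theta_k)^\top\nabla\mathfrak R_\lambda(\eta_k)$, then pseudo-inverse algebra on the leading term --- is the same as the paper's. Where you genuinely differ is in how that algebra is carried out, and your version is the more careful one. The paper's proof substitutes $G(\theta_k)^+ = DP(\theta_k)^+\,\nabla^2\phi(\eta_k)^{-1}(DP(\theta_k)^\top)^+$ and then collapses $DP\,DP^+$ and $(DP^\top)^+DP^\top$ into Euclidean projections; but this reverse-order factorization of the Moore--Penrose inverse is not a valid matrix identity in general (already for $A=(1,0)^\top$ and a non-diagonal positive definite $B$ one has $(A^\top BA)^+=B_{11}^{-1}\ne (B^{-1})_{11}=A^+B^{-1}(A^\top)^+$), and the ambient expression $\Pi\,(\nabla^2\phi)^{-1}\,\Pi$ it produces differs in general from the restricted-Hessian inverse $\iota\,(\iota^\top\nabla^2\phi\,\iota)^{-1}\iota^\top$ unless $\nabla^2\phi$ preserves $T\mathcal L$. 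Your route --- factor $DP(\theta_k)=\iota M$ with $M$ of full row rank (exactly what regularity of the parametrization plus Proposition~\ref{prop:birational} gives), then use $(M^\top HM)^+=M^+H^{-1}(M^\top)^+$ and $MM^+=(M^\top)^+M^\top=I$ for $H=\iota^\top\nabla^2\phi\,\iota$ positive definite --- is rigorous, handles the overparametrized case explicitly, and lands on the restricted-Hessian form, which is also what the derivation of the preceding Newton-iteration lemma actually yields ($\iota\,\nabla^2 f(x_k)^{-1}\nabla f(x_k)$ with $\nabla^2 f=\iota^\top\nabla^2\mathfrak R_\lambda\,\iota$). Your closing observation, that the symbol $\Pi^E_{T\mathcal L}(\nabla^2\phi(\eta_k))^{-1}\Pi^E_{T\mathcal L}(\cdot)$ must be read as this restricted inverse rather than as an ambient inverse sandwiched between projections, is precisely the reading under which Lemma~\ref{lem:NPGasInexactNewton} is consistent with the Newton lemma and under which the choice $\Delta t=\lambda$ in Theorem~\ref{thm:localQuadraticConvergence} reproduces the Newton direction; making that reading explicit is a genuine improvement over the paper's presentation.
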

\begin{proof}
Writing $P$ for the mapping $\theta\mapsto\eta_\theta$ and an application of Taylor's theorem implies that 
\begin{align*}
    \eta_{k+1} - \eta_k & = \Delta t \cdot  DP(\theta_k)G(\theta_k)^+\nabla R_\lambda(\theta_k) +  O(\Delta t^2\lVert G(\theta_k)^+ \nabla R_\lambda(\theta_k) \rVert^2).
\end{align*}
The first term is equal to
\[ \Delta t \cdot DP(\theta_k)DP(\theta)^+\nabla^2 \phi(\eta_k)^{-1}(DP(\theta_k)^\top)^+\nabla DP(\theta_k)^\top \nabla \mathfrak R_\lambda(\eta_k),  \]
which again is equal to
\[ \Delta t \cdot \Pi_{T\mathcal L}^{E}(\nabla^2 \phi(\eta_k)^{-1} \Pi_{T\mathcal L}^{E}(\nabla \mathfrak R_\lambda(\eta_k))) \]
since $DP(\theta_k)DP(\theta_k)^+  = (DP(\theta_k)^\top)^+DP(\theta_k)^\top = \Pi_{\operatorname{range}(DP(\theta_k))}$ like before and $\operatorname{range}(DP(\theta_k)) = T\mathcal L$.  
\end{proof}

\begin{proof}[Proof of Theorem~\ref{thm:localQuadraticConvergence}]
In our case, by the preceding two lemmata, we have
\begin{align*}
    \lVert \varepsilon_k \rVert = O(\Delta t^2 \lVert G(\theta_k)^+ \nabla R_\lambda(\theta_k) \rVert^2) = O(\lVert \Pi_{T\mathcal L}\nabla \mathfrak R_\lambda(\eta_k) \rVert^2) = O(\lVert \nabla f(x_k) \rVert^2),
\end{align*}
which proves the claim. 
\end{proof}

\begin{remark}
A benefit of regarding the iteration as an inexact Newton method in state-action space is that the problem is strongly convex in state-action space. 
In contrast, in policy space the problem is non-convex, which makes the analysis in that space more delicate. Further, the corresponding Riemannian metric might not be the Hessian metric of the regularizer in policy space (see also Remark~\ref{rem:kakadePolicyspace}). 
In the parameter $\theta$, the NPG algorithm can be perceived as a generalized Gauss-Newton method; however, the reward function is non-convex in parameter space. 
Further, for overparametrized policy models, i.e., when $\operatorname{dim}(\Theta)>\operatorname{\dim}(\Delta_\mathcal A^\mathcal S) = \lvert\mathcal S\rvert(\lvert\mathcal A\rvert-1)$ the Hessian $\nabla^2 R(\theta^\ast)$ can not be positive definite, which makes the analysis in parameter space less immediate.
Note that the tabular softmax policies in \eqref{eq:soft-max} are overparametrized since in this case $\operatorname{dim}(\Theta) = \lvert\mathcal S\rvert \lvert\mathcal A\rvert$.
\end{remark}

\section{Discussion} 

We provide a study of a general class of natural policy gradient methods arising from Hessian geometries in state-action space. This covers, in particular, the notions of NPG due to Kakade and Morimura et al., which are induced by the conditional entropy and entropy respectively. 
Leveraging results on gradient flows in Hessian geometries we obtain global convergence guarantees of NPG flows for regular policy parametrizations and show that both Kakade's and Morimura's NPG converge linearly, and obtain sublinear convergence rates for NPG associated with $\beta$-divergences. We provide experimental evidence of the tightness of these rates. 
Finally, we perceive the NPG with respect to the Hessian geometry induced by the regularizer and with step size equal to the regularization strength, as an inexact Newton method in state-action space, which allows for a very compact argument of the locally quadratic convergence of this method. 

Our convergence analysis currently does not cover the case of general parametric policy classes nor the case of partially observable MDPs, which we consider important future directions. Further, we study only the planning problem, i.e., assume to have access to exact gradients, and hence a combination of our study of NPG methods in state-action space with estimation problems would be a natural extension. 

\paragraph{Acknowledgments}
This project has been supported by ERC Starting Grant 757983 and DFG SPP 2298 Grant 464109215. 
GM has been supported by NSF CAREER Award DMS-2145630. 
JM acknowledges support from the International Max Planck Research School for Mathematics in the Sciences (IMPRS MiS) and the Evangelisches Studienwerk Villigst e.V.. 

\paragraph{Conflict of interest statement}
There is no conflict of interest.

\paragraph{Data availability statement}
A repository with computer code to reproduce the experiments will be made available. 

\bibliography{bibliography}
\bibliographystyle{plain}

\end{document}